\newtheorem{definition}{Definition}[section]
\newtheorem{proposition}{Proposition}[section]
\newtheorem{theorem}{Theorem}[section]
\newtheorem{corollary}{Corollary}[section]
\newtheorem{lemma}{Lemma}[section]
\begin{document}

\author{Nicoleta Aldea and Gheorghe Munteanu}
\title{Recent results on complex Cartan spaces}
\date{}
\maketitle

\begin{abstract}
In this paper, we first provide an updated survey of the geometry of complex
Cartan spaces. New characterizations for some particular classes of complex
Cartan spaces are pointed out, e.g. Landsberg-Cartan, strongly
Berwald-Cartan and others. We introduce the Cartan-Randers spaces which
offer examples of Berwald-Cartan and strongly Berwald-Cartan spaces. Then,
we investigate the complex geodesic curves of a complex Cartan space, using
the image by Legendre transformation ($\mathcal{L}-$ duality) of complex
geodesic curves of a complex Finsler space. Assuming the weakly K\"{a}hler
condition for a complex Cartan space, we establish that its complex geodesic
curves derive from Hamilton-Jacobi equations. Also, by $\mathcal{L}-$
duality, we introduce the corespondent notion of the projectively related
complex Finsler metrics, on the complex Cartan spaces. Various descriptions
of the projectively related complex Cartan metrics are given. As
applications, the projectiveness of a complex Cartan-Randers metric and the
locally projectively flat complex Cartan metrics are analyzed.
\end{abstract}

\begin{flushleft}
\bigskip \strut \textbf{2010 Mathematics Subject Classification:} 53B40,
53C60.

\textbf{Key words and phrases: }Landsberg-, Berwald- and Randers-Cartan
metrics, $\mathcal{L}-$ duality, projectively related complex Cartan metrics.
\end{flushleft}

\section{Introduction}

\setcounter{equation}{0}The study of geometry of the holomorphic cotangent
bundle, endowed with a complex Hamiltonian, has been deepened in some
previous works of the second author \cite{Mub, Mu1}. By analogy with the
real case, where remarkable results are known (\cite{M-S,An,Bw}), the
geometry achieved here is called complex Hamilton geometry. The particular
context in which the complex Hamiltonian is homogeneous on the fibre, is
known as complex Cartan geometry.

The approach of the complex Cartan spaces has been justified by the
existence of a pseudo-distance, on the dual holomorphic bundle, highlighted
by S. Kobayashi in \cite{Ko}. Using the equivalence method, J. Faran studied
in \cite{Fa} the complex Cartan spaces, (which he calls Finsler-Hamilton
spaces), with constant holomorphic curvature. He also gave some
characterizations of the dual Kobayashi metric.

On the other side, as we well know, the Hamiltonian Mechanics can be
retrieved via the Lagrangian Mechanics, by so called Legendre
transformation. This problem was already extended to the complex case (\cite%
{Mub}), the study of geometric objects on the holomorphic cotangent bundle $%
T^{\prime \ast }M,$ via the complex Legendre transformation, defined on the
holomorphic tangent bundle $T^{\prime }M,$ being called $\mathcal{L}-$ dual
process. By $\mathcal{L}-$ duality, it is shown that the dual Kobayashi
metric is exactly the $\mathcal{L}-$ dual of the well-known Kobayashi metric
on $T^{\prime }M$, (\cite{Mub, Mu1}).

The $\mathcal{L}-$ dual process seems a satisfactory technique for the
investigation of \ the geometry of complex Cartan spaces, using
corresponding notions from complex Finsler spaces, for which comprehensive
results are known, (\cite{A-P}-\cite{Al-Mu4},\cite{Mub}). But, a more
advertent analysis of previous results, obtained by $\mathcal{L}-$ dual
process, induced us to come back to some ideas which we reformulated then.
More exactly, the $\mathcal{L}-$ dual of the vertical natural frame $\frac{%
\partial }{\partial \eta ^{k}}$ on $T_{u}^{\prime }(T^{\prime }M)$ is
identified with the frame that is obtained by lifting the subscripts of the
vertical natural frame $\frac{\partial }{\partial \zeta _{k}}$ on $%
T_{u^{\ast }}^{\prime }(T^{\prime \ast }M),$ only in the purely Hermitian
case. This leads us to a lot of difficulties and some new ideas, which we
discuss and solve in the present paper.

\medskip

The paper is organized as follows. After a short survey of complex Cartan
spaces in our own notation, (Section 2), we extend some results about
classes of complex Cartan spaces obtained in \cite{Al-Mu5}. \ To the
Chern-Cartan complex nonlinear connection, with local coefficients $%
N_{ji}=-h_{j\bar{k}}\frac{\partial h^{\bar{k}l}}{\partial z^{i}}\zeta _{l}$,
we associate a complex linear connection of Berwald type $B\Gamma :=(N_{ji},
$ $B_{jk}^{i},$ $B_{\bar{j}k}^{\bar{\imath}},$ $0,$ $0),$ which is not of $%
(1,0)$ - type or metrical compatible. Here, we prove that the conditions: $%
B\Gamma $ is horizontal metrical compatible and $B\Gamma $ is of $(1,0)$ -
type are equivalent (Theorem 3.2) and, we call such a space
Landsberg-Cartan. Also, we obtain that any Landsberg-Cartan space with
weakly K\"{a}hler-Cartan property is a K\"{a}hler-Cartan space, (Theorem
3.3). The complex Berwald-Cartan spaces (i.e., the spaces with $%
B_{jk}^{i}(z) $) are Landsberg-Cartan. We show that any purely Hermitian
complex Cartan space is a complex Berwald-Cartan space. The complex
Berwald-Cartan spaces which are weakly K\"{a}hler-Cartan are called strongly
Berwald-Cartan spaces and they are contained in the class of K\"{a}%
hler-Cartan spaces, (Corollary 3.1). \ All these results are described in \
Section 3.

In Section 4, we introduce the Cartan-Randers metrics $\mathcal{\tilde{C}}%
=\alpha +|\beta |$, where $\alpha =\sqrt{a^{\bar{j}i}(z)\bar{\zeta}_{j}\zeta
_{i}}$ is a purely Hermitian complex Cartan metric on the complex manifold $%
M $ and $|\beta |$ is obtained by $\beta =b^{i}\zeta _{i},$ $b^{i}:=a^{\bar{j%
}i}(z)b_{\bar{j}}(z),$ with $b_{i}(z)$ the local coefficients of a
differential $(1,0)-$ form on $M.$ Complex Cartan-Randers metrics are
remarkable, they represent the medium in which Hermitian geometry properly
interferes with complex Cartan geometry. \ Theorem 4.2 and Corollary 4.1
report on the necessary and sufficient conditions for a complex
Cartan-Randers metric to be a Berwald-Cartan metric or strongly
Berwald-Cartan metric. The existence of complex Cartan-Randers spaces with
Berwald-Cartan and strongly Berwald-Cartan properties is attested by some
explicit examples.

The problem of the complex Cartan spaces obtained as image of the complex
Finsler spaces, via complex Legendre transformation is described in Section
5. First of all, we deduce the correct form of the $\mathcal{L}-$ dual of
vertical natural frame $\frac{\partial }{\partial \eta ^{k}},$ called the
nonholonomic vertical frame, (Theorems 5.1, 5.2). Also, we determine the $%
\mathcal{L}-$ dual of Chern-Finsler complex linear connection and by $%
\mathcal{L}-$ duality the weakly K\"{a}hler-Finsler property is sent in
weakly K\"{a}hler-Cartan property. The problem of the complex geodesic
curves of a complex Cartan spaces is also investigated by $\mathcal{L}-$
dual process. The image $\sigma ^{\ast }(s),$ by $\mathcal{L}-$ duality of
the complex geodesics curve $\sigma (s)$ of a complex Finsler spaces is
obtained, (Theorem 5.3) and, in the weakly K\"{a}hler-Cartan case $\sigma
^{\ast }(s)$ is a solution of the Hamilton-Jacobi equations. $\sigma ^{\ast
}(s)$ is called the complex geodesic curve of a complex Cartan space and its
equations can be rewritten in a more significant form as in Theorem 5.4. \

The projectively related complex Cartan spaces are approached by $\mathcal{L}%
-$ duality, too. Two complex Cartan metrics $\mathcal{\tilde{C}}$ and $%
\mathcal{C}$ on a common underlying manifold $M,$ obtained \ by $\mathcal{L}%
- $ duality, are called projectively related if any complex geodesic curve,
in the sense describe above, of the first is also a complex geodesic curve
for the second and vice versa. This means that between the functions $\tilde{%
N}_{k}$ and $N_{k}$ there is a so-called projective change $\tilde{N}%
_{k}=N_{k}+B_{k}+Q\zeta _{k}$, where $Q$ is a smooth function on $\widetilde{%
T^{\prime \ast }M}$ with complex values and $B_{k}:=\tilde{h}_{sk}\tilde{%
\Theta}^{\ast s}-h_{sk}\Theta ^{\ast s},$ (Theorem 5.5). Finally,
considering a Cartan-Randers metric $\mathcal{\tilde{C}}=\alpha +|\beta |$,
we prove that $\mathcal{\tilde{C}}$ can be the image by $\mathcal{L}-$
duality of a complex Finsler metric only if it is purely Hermitian. Then, we
find the necessary and sufficient conditions under which $\mathcal{\tilde{C}}
$ and $\alpha $ are projectively related. Also, the locally projectively
flat complex Cartan metrics are pointed out, (Corollary 5.3).

\section{Preliminaries}

\setcounter{equation}{0}Geometry of real Finsler spaces is already one
classic today, (\cite{A-P,B-S,Ma1,Sh1}, etc.). During the last years, we
remark a significant progress in the study of complex Finsler geometry, (%
\cite{A-P}-\cite{Al-Mu4},\cite{Mub,Wo,C-S}, etc.). Also, the study of Cartan
spaces (real and complex) is enthralling, (\cite{M-S, An, Mub}, etc.)

Let $M$ be a $n$ $-$ dimensional complex manifold and $z=(z^{k})_{k=%
\overline{1,n}}$ be complex coordinates in a local chart. The complexified
of the real tangent bundle $T_{C}M$ splits into the sum of holomorphic
tangent bundle $T^{\prime }M$ and its conjugate $T^{\prime \prime }M$. The
bundle $T^{\prime }M$ is itself a complex manifold and the coordinates in a
local chart will be denoted by $u=(z^{k},\eta ^{k})_{k=\overline{1,n}}.$
These are changed into $(z^{\prime k},\eta ^{\prime k})_{k=\overline{1,n}}$
by the rules $z^{\prime k}=z^{\prime k}(z)$ and $\eta ^{\prime k}=\frac{%
\partial z^{\prime k}}{\partial z^{l}}\eta ^{l},$ $rank(\frac{\partial
z^{\prime k}}{\partial z^{l}})=n.$ The dual of $T^{\prime }M$ is denoted by $%
T^{\prime \ast }M$. On the manifold $T^{\prime \ast }M$, a point $u^{\ast }$
is characterized by the coordinates $u^{\ast }=(z^{k},\zeta _{k})_{k=%
\overline{1,n}},$ and a change of these has the form $z^{\prime k}=z^{\prime
k}(z)$ and$\;\;\zeta _{k}^{\prime }=\frac{\partial ^{\ast }z^{j}}{\partial
z^{\prime k}}\zeta _{j}$, $rank(\frac{\partial ^{\ast }z^{\prime k}}{%
\partial z^{l}})=n$. Here and further, we use the notation with star for the
partial derivatives with respect to $z$, on $T^{\prime \ast }M,$ only to
distinguish them from those on $T^{\prime }M$.

\begin{definition}
A \textit{complex Cartan space} is a pair $(M,\mathcal{C})$, where $\mathcal{%
C}:T^{\prime \ast }M\rightarrow \mathbb{R}^{+}$ is a continuous function
satisfying the conditions:

\textit{i)} $H:=\mathcal{C}^{2}$ is smooth on $\widetilde{T^{\prime \ast }M}%
:=T^{\prime \ast }M\backslash \{0\};$

\textit{ii)} $\mathcal{C}(z,\zeta )\geq 0$, the equality holds if and only
if $\zeta =0;$

\textit{iii)} $\mathcal{C}(z,\lambda \zeta )=|\lambda |C(z,\zeta )$ for $%
\forall \lambda \in \mathbb{C}$;

\textit{iv)} the Hermitian matrix $\left( h^{\bar{j}i}(z,\zeta )\right) $ is
positive definite, where $h^{\bar{j}i}:=\frac{\partial ^{2}H}{\partial \zeta
_{i}\partial \bar{\zeta}_{j}}$ is the fundamental metric tensor.
\end{definition}

Equivalently, the condition \textit{iv)} means that the indicatrix is
strongly pseudo-convex.

Consequently, from $iii$) we have $\frac{\partial H}{\partial \zeta _{k}}%
\zeta _{k}=\frac{\partial H}{\partial \bar{\zeta}_{k}}\bar{\zeta}_{k}=H,$ $%
\frac{\partial h^{\bar{j}i}}{\partial \zeta _{k}}\zeta _{k}=\frac{\partial
h^{\bar{j}i}}{\partial \bar{\zeta}_{k}}\bar{\zeta}_{k}=0$ and $H=h^{\bar{j}i}%
\bar{\zeta}_{j}\zeta _{i}.$ An usual example of complex Cartan space is so
called \textit{purely Hermitian} complex Cartan space, this means that $h^{%
\bar{j}i}=h^{\bar{j}i}(z).$

We say that a function $f$ on $T^{\prime \ast }M$ is $(p,q)$-\textit{%
homogeneous} with respect to the coordinate $\zeta =(\zeta _{k})$ iff $%
f(z^{k},\lambda \zeta _{k})=\lambda ^{p}\bar{\lambda}^{q}f(z^{k},\zeta _{k})$%
, for any $\lambda \in \mathbf{C}$. For instance, $H:=\mathcal{C}^{2}$ is a $%
(1,1)$ - homogeneous function.

Roughly speaking, the geometry of a complex Cartan space consists in the
study of the geometric objects of the complex manifold $T^{\prime *}M$
endowed with the Hermitian metric structure defined by $h^{\bar{j}i}.$
Therefore, further the first step is the study of sections in the
complexified tangent bundle of $T^{\prime *}M$, which is decomposed in the
sum $T_C(T^{\prime *}M)=T^{\prime }(T^{\prime *}M)\oplus T^{\prime \prime
}(T^{\prime *}M)$.

Let $VT^{\prime \ast }M\subset T^{\prime }(T^{\prime \ast }M)$ be the
vertical bundle, which has the vertical distribution $V_{u^{\ast
}}(T^{\prime \ast }M)$, locally spanned by $\{\frac{\partial }{\partial
\zeta _{k}}\}.$ A complex nonlinear connection, briefly $(c.n.c.)$, on $%
T^{\prime \ast }M$ is a supplementary subbundle in $T^{\prime }(T^{\prime
\ast }M)\;$of $V(T^{\prime \ast }M)$ , i.e., $T^{\prime }(T^{\prime \ast
}M)=H(T^{\prime \ast }M)\oplus V(T^{\prime \ast }M).$ The horizontal
distribution $H_{u^{\ast }}(T^{\prime \ast }M)$ is locally spanned by $\{%
\frac{\delta ^{\ast }}{\delta z^{j}}\},$ where $\frac{\delta ^{\ast }}{%
\delta z^{k}}=\frac{\partial ^{\ast }}{\partial z^{k}}+N_{jk}\frac{\partial
}{\partial \zeta _{j}}$ and functions $N_{jk}$ are the coefficients of the $%
(c.n.c.)$ on $T^{\prime \ast }M.$ The pair $\{\delta _{k}^{\ast }:=\frac{%
\delta ^{\ast }}{\delta z^{k}},\,\dot{\partial}^{k}:=\frac{\partial }{%
\partial \zeta _{k}}\}$ will be called the adapted frame of the $(c.n.c.)$,
which obey the change rules $\delta _{k}^{\ast }=\frac{\partial ^{\ast
}z^{\prime j}}{\partial z^{k}}\delta _{j}^{\ast \prime }$ and $\dot{\partial}%
^{k}=\frac{\partial ^{\ast }z^{\prime k}}{\partial z^{j}}\dot{\partial}%
^{\prime j}.$ By conjugation everywhere we have obtained an adapted frame $%
\{\delta _{\bar{k}}^{\ast },\dot{\partial}^{\bar{k}}\}$ on $T_{u^{\ast
}}^{\prime \prime }(T^{\prime \ast }M).$ The dual adapted frames are $%
\{d^{\ast }z^{k},\;\delta \zeta _{k}=d\zeta _{k}-N_{kj}dz^{j}\}$ and $%
\{d^{\ast }\bar{z}^{k},\delta \bar{\zeta}_{k}\}.$

A Hermitian connection $D$, of $(1,0)-$ type is so called Chern-Cartan
connection (cf. \cite{Mub}), in brief $C-C$ connection, and it is locally
given by the following coefficients%
\begin{equation}
N_{ji}=-h_{j\bar{k}}\frac{\partial ^{\ast }h^{\bar{k}l}}{\partial z^{i}}%
\zeta _{l}\;;\;H_{jk}^{i}:=h^{\bar{m}i}(\delta _{k}^{\ast }h_{j\bar{m}%
})\;;\;V_{j}^{ik}:=-h_{j\bar{m}}(\dot{\partial}^{k}h^{\bar{m}i}),
\label{1.3}
\end{equation}%
and $H_{\bar{j}k}^{\bar{\imath}}=V_{\bar{j}}^{\bar{\imath}k}=0$, where here
and hereinafter $\delta _{k}^{\ast }$ is the adapted frame of the $C-C$ $%
(c.n.c.),$ $h_{j\bar{k}}h^{\bar{k}l}=\delta _{j}^{l}$ and $D_{\delta
_{k}^{\ast }}\delta _{j}^{\ast }=H_{jk}^{i}\delta _{i},$ $D_{\delta
_{k}^{\ast }}\dot{\partial}^{i}=-H_{jk}^{i}\dot{\partial}^{j},\;D_{\dot{%
\partial}^{k}}\delta _{j}^{\ast }$ $=V_{j}^{ik}\delta _{i}^{\ast },\;D_{\dot{%
\partial}^{k}}\dot{\partial}^{i}=-V_{j}^{ik}\dot{\partial}^{j},$ etc.
Moreover, we have%
\begin{equation}
H_{jk}^{i}=\dot{\partial}^{i}N_{jk}\;;\;H_{jk}^{i}\zeta _{i}=N_{jk}\;.
\label{1.3'}
\end{equation}%
Denoting by $"\shortmid "$ , $"\mid "$ , $"\overline{\shortmid }"$ and $"%
\bar{\mid}",$ the $h^{\ast }-$, $v^{\ast }-$, $\overline{h}^{\ast }-$, $%
\overline{v^{\ast }}-$ covariant derivatives with respect to $C-C$
connection, respectively, it results $h_{|k}^{\bar{j}i}=h_{|\bar{k}}^{\bar{j}%
i}=h^{\bar{j}i}|_{k}=h^{\bar{j}i}|_{\overline{k}}=0,$ i.e. $C-C$ connection
is $h^{\ast }-$ and $v^{\ast }-$ metrical.

For more details on complex Cartan spaces, see \cite{Mub}. Further on, in
order to simplify the writing, we use a bar over indices to denote the
complex conjugation of the variables or of the frames, e.g., $\zeta _{\bar{k}%
}:=\bar{\zeta}_{k}$ or $\dot{\partial}^{\bar{k}}:=\overline{\dot{\partial}%
^{k}}$.

\section{Complex Landsberg-Cartan spaces}

\setcounter{equation}{0}In \cite{Al-Mu5} we investigated some classes of
complex Cartan spaces. A complex Cartan space $(M,\mathcal{C})$ is called
\textit{strongly K\"{a}hler-Cartan} iff $T_{jk}^{\ast i}=0,$ \textit{K\"{a}%
hler-Cartan}$\;$iff $T_{jk}^{\ast i}\zeta _{i}=0$ and \textit{weakly K\"{a}%
hler-Cartan\ }if\textit{\ } $T_{jk}^{\ast i}\zeta _{i}\zeta ^{j}=0,$ where $%
T_{jk}^{\ast i}:=H_{jk}^{i}-H_{kj}^{i}$ is the $h-$torsion and $\zeta
^{j}:=h^{\bar{m}j}\zeta _{\bar{m}}.$ But, the notions of strongly K\"{a}hler
and K\"{a}hler coincide, as in complex Finsler geometry (\cite{C-S}). Also,
in the particular case of a purely Hermitian complex Cartan metric all those
nuances of K\"{a}hler-Cartan are same with $\frac{\partial h_{j\bar{m}}}{%
\partial z^{i}}=\frac{\partial h_{i\bar{m}}}{\partial z^{j}}.$ The space $(M,%
\mathcal{C})$ is called \textit{Berwald-Cartan} iff the coefficients $%
H_{jk}^{i}$ depend only on the position $z.$

\begin{theorem}
(\cite{Al-Mu5}) Let $(M,\mathcal{C})$ be a $n$ - dimensional complex Finsler
space. Then the following assertions are equivalent:

i) $(M,\mathcal{C})$ is a Berwald-Cartan;

ii) $N_{ji}$ are holomorphic in $\zeta $;

iii) $V_{|j}^{\bar{m}ik}=0;$

iv) $V_{|\bar{j}}^{\bar{m}ik}=0.$
\end{theorem}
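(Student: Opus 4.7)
The plan is to prove the four equivalences in two groups: (i)$\Leftrightarrow $(ii) by a homogeneity-plus-holomorphy argument extracted directly from (\ref{1.3'}), and (i)$\Leftrightarrow$(iii), (i)$\Leftrightarrow$(iv) by explicitly computing the horizontal (resp.\ conjugate-horizontal) covariant derivative of $V^{\bar{n}ik}:=h^{\bar{n}j}V_{j}^{ik}$ and reading off that its vanishing is equivalent to $\dot{\partial}^{k}H_{sl}^{i}=0$.

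For (i)$\Leftrightarrow$(ii), the relations (\ref{1.3'}), $N_{jk}=H_{jk}^{i}\zeta _{i}$ together with $H_{jk}^{i}=\dot{\partial}^{i}N_{jk}$, simply encode the $(1,0)$-homogeneity of $N_{jk}$ in $\zeta $. If $H_{jk}^{i}=H_{jk}^{i}(z)$ then $N_{jk}$ is linear, hence holomorphic, in $\zeta $. Conversely, if $N_{jk}$ is holomorphic in $\zeta $ on $\widetilde{T^{\prime \ast }M}$, Hartogs' extension (for $n\geq 2$; the $n=1$ case is immediate) combined with the degree-one homogeneity forces $N_{jk}=A_{jk}^{i}(z)\zeta _{i}$, whence $H_{jk}^{i}=\dot{\partial}^{i}N_{jk}=A_{jk}^{i}(z)$.

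For (i)$\Leftrightarrow$(iii), I would raise the subscript in $V_{j}^{ik}=-h_{j\bar{m}}\dot{\partial}^{k}h^{\bar{m}i}$ to obtain $V^{\bar{n}ik}=-\dot{\partial}^{k}h^{\bar{n}i}$. Three ingredients feed the computation: the horizontal metricity $h^{\bar{n}i}_{|l}=0$, which yields $\delta _{l}^{\ast }h^{\bar{n}i}=-H_{sl}^{i}h^{\bar{n}s}$; the bracket $[\delta _{l}^{\ast },\dot{\partial}^{k}]=-H_{ml}^{k}\dot{\partial}^{m}$, which follows from $\delta _{l}^{\ast }=\partial _{l}^{\ast }+N_{ml}\dot{\partial}^{m}$ and (\ref{1.3'}); and the vanishing of the mixed Chern-Cartan coefficients $H_{\bar{s}l}^{\bar{n}}=0$. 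Substituting these into
\[
V^{\bar{n}ik}_{|l}=-\delta _{l}^{\ast }\dot{\partial}^{k}h^{\bar{n}i}+H_{sl}^{i}V^{\bar{n}sk}+H_{sl}^{k}V^{\bar{n}is},
\]
the $H$--$V$ cross-terms cancel pairwise and one is left with $V^{\bar{n}ik}_{|l}=h^{\bar{n}s}\,\dot{\partial}^{k}H_{sl}^{i}$; the non-degeneracy of $h^{\bar{n}s}$ makes (iii) equivalent to $\dot{\partial}^{k}H_{sl}^{i}=0$, i.e.\ to (i). The equivalence (i)$\Leftrightarrow$(iv) is obtained by the analogous calculation with $\delta _{\bar{l}}^{\ast }$ and the bracket $[\delta _{\bar{l}}^{\ast },\dot{\partial}^{k}]=-(\dot{\partial}^{k}N_{\bar{m}\bar{l}})\dot{\partial}^{\bar{m}}$; under (i), both $\dot{\partial}^{k}H_{\bar{s}\bar{l}}^{\bar{n}}$ and $\dot{\partial}^{k}N_{\bar{m}\bar{l}}$ vanish (the latter because $N_{ml}=H_{ml}^{i}(z)\zeta_{i}$ makes $N_{\bar{m}\bar{l}}$ linear in $\bar{\zeta}$), while the converse direction is recovered by contracting the resulting identity with $\zeta _{\bar{n}}$, invoking $(\dot{\partial}^{k}H_{\bar{r}\bar{l}}^{\bar{n}})\zeta _{\bar{n}}=\dot{\partial}^{k}N_{\bar{r}\bar{l}}$ from (\ref{1.3'}), and falling back to (ii).

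The main obstacle is the cancellation bookkeeping that produces the clean expression $V^{\bar{n}ik}_{|l}=h^{\bar{n}s}\dot{\partial}^{k}H_{sl}^{i}$: one must verify that the four Christoffel-type corrections (from the commutator, from rewriting $\delta _{l}^{\ast }h^{\bar{n}i}$, and from the two upper-index connection terms in the definition of the covariant derivative) conspire to annihilate each other, leaving only the single tensorial piece. Everything else is routine homogeneity and commutation calculus on $T^{\prime \ast }M$.
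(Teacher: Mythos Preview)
The paper itself contains no proof of this theorem --- it is quoted verbatim from \cite{Al-Mu5} --- so there is nothing to compare your argument against line by line. Your overall strategy (reduce everything to explicit identities for $V^{\bar n ik}_{|l}$ and $V^{\bar n ik}_{|\bar l}$ via the commutators of $\delta_{l}^{\ast}$, $\delta_{\bar l}^{\ast}$ with $\dot\partial^{k}$ and the horizontal metricity of $h^{\bar j i}$) is sound, and the key identity you derive,
\[
V^{\bar n ik}_{|l}=h^{\bar n s}\,\dot\partial^{k}H^{i}_{sl},
\]
is correct: the four cross-terms do cancel exactly as you describe. Your treatment of (i)$\Leftrightarrow$(ii) via homogeneity and holomorphy is also fine, and your sketch for (iv)$\Rightarrow$(ii) (contract with $\zeta_{\bar n}$ and use $(\dot\partial^{\bar m}h^{\bar n i})\zeta_{\bar n}=0$ to obtain $\dot\partial^{k}N_{\bar s\bar l}=0$, then conjugate) closes that implication cleanly.

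There is, however, a genuine gap in your step ``(iii)$\Rightarrow$(i)''. From $V^{\bar n ik}_{|l}=0$ you correctly get $\dot\partial^{k}H^{i}_{sl}=0$, i.e.\ $H^{i}_{sl}$ is independent of the \emph{holomorphic} fibre variables $\zeta_{k}$. But Berwald--Cartan requires $H^{i}_{sl}$ to be independent of the fibre altogether, and you have not ruled out dependence on $\bar\zeta$. The $(0,0)$-homogeneity $(\dot\partial^{\bar m}H^{i}_{jk})\zeta_{\bar m}=0$ only says $H^{i}_{jk}$ is degree-zero in $\bar\zeta$, which by itself does not force constancy (think of ratios $\bar\zeta_{1}/\bar\zeta_{2}$). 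So the sentence ``$\dot\partial^{k}H^{i}_{sl}=0$, i.e.\ (i)'' is too fast. Your chain of implications proves (i)$\Leftrightarrow$(ii)$\Leftrightarrow$(iv) and (i)$\Rightarrow$(iii), but (iii)$\Rightarrow$(i) still needs an independent argument --- for instance, one would have to exploit the specific structure $N_{jk}=-h_{j\bar r}(\partial^{\ast}_{k}h^{\bar r l})\zeta_{l}$ (or a Ricci/Bianchi-type identity for the $C$--$C$ connection) to pass from $\dot\partial^{m}\dot\partial^{i}N_{jk}=0$ to $\dot\partial^{\bar r}N_{jk}=0$. As written, that link is missing.
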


Examples of complex Berwald-Cartan metrics are provided first by the class
of purely Hermitian complex Cartan metrics. Indeed, in this case the local
coefficients of \ $C-C$ $(c.n.c.),$ $N_{ji}=-h_{j\bar{k}}(z)\frac{\partial
^{\ast }h^{\bar{k}l}(z)}{\partial z^{i}}\zeta _{l}$ are holomorphic in $%
\zeta .$

Another example of Berwald-Cartan metric is given by the function%
\begin{equation}
\mathcal{C}^{2}=H(z,w;\zeta ,\upsilon ):=e^{2\sigma }\left( |\zeta
|^{4}+|\upsilon |^{4}\right) ^{\frac{1}{2}},\;\mbox{with}\;\;\zeta ,\upsilon
\neq 0,\;\;  \label{IV.9}
\end{equation}%
on $\mathbf{C}^{2}$, where $\sigma (z,w)$ is a real valued function and $%
|\zeta _{i}|^{2}:=\zeta _{i}\zeta _{\bar{\imath}},$ $\zeta _{i}\in \{\zeta
,\upsilon \}$, $i=1,2.$ In (\ref{IV.9}) we relabeled the usual local
coordinates $z^{1},$ $z^{2},$ $\zeta _{1},$ $\zeta _{2}$ as $z,$ $w,$ $\zeta
,$ $\upsilon ,$ respectively. Direct computation leads to%
\begin{equation*}
N_{11}=-\frac{\partial \sigma }{\partial z}\zeta ;\;N_{12}=-\frac{\partial
\sigma }{\partial w}\zeta ;\;N_{21}=-\frac{\partial \sigma }{\partial z}%
\upsilon ;\;N_{22}=-\frac{\partial \sigma }{\partial w}\upsilon ,
\end{equation*}%
which attest that $N_{ji}$, $i,j=i=1,2,$ are holomorphic in $\zeta $ and $%
\upsilon .$

\medskip Two complex nonlinear connections (Chern-Finsler and the canonical)
play a significant role in complex Finsler geometry, (see \cite%
{Al-Mu3,Al-Mu2}). The first induces a complex spray and, the derivative of
the local coefficients of this spray leads to the second. It would be
expected (taking into account the $\mathcal{L}-$ dual process which will be
described later) that similar things happen in complex Cartan geometry. So,
for a complex Cartan space, we can take the canonical $(c.n.c.)$ besides the
$C-C$ $(c.n.c.)$ as follows. The local coefficients of the canonical $%
(c.n.c.)$ on $(M,C)$ are defined by $\overset{c}{N_{ji}}:=(\dot{\partial}%
^{k}N_{ji})\zeta _{k}.$ But, the $(1,0)$ - homogeneity with respect to the
variables $\zeta =(\zeta _{k})$ and $\bar{\zeta}=(\zeta _{\bar{k}})$ of $%
N_{ji}$, (i.e., $(\dot{\partial}^{k}N_{ji})\zeta _{k}=N_{ji}$ and $(\dot{%
\partial}^{\bar{k}}N_{ji})\zeta_{\bar{k}}=0)$, implies $\overset{c}{N_{ji}}%
=N_{ji}.$ Therefore, in complex Cartan geometry only the $C-C$ $(c.n.c.)$
from (\ref{1.3}) is available.

Nevertheless, we associate to the $C-C$ $(c.n.c.)$ another complex linear
connection of Berwald type%
\begin{equation*}
B\Gamma :=\left( N_{ji},B_{jk}^{i}:=\dot{\partial}^{i}N_{jk},B_{\bar{j}k}^{%
\bar{\imath}}:=(\dot{\partial}^{\bar{\imath}}N_{ks})\zeta ^{s}\zeta _{\bar{j}%
},0,0\right) .
\end{equation*}%
$B\Gamma $ is neither $h^{\ast }-$ nor $v^{\ast }-$ metrical, (for more
details see \cite{Mub}). Moreover, we have the following properties:
\begin{equation*}
B_{jk}^{i}=H_{jk}^{i};\;(\dot{\partial}^{m}H_{jk}^{i})\zeta _{m}=0;\;(\dot{%
\partial}^{\bar{m}}H_{jk}^{i})\zeta _{\bar{m}}=0;\;(\dot{\partial}^{\bar{%
\imath}}N_{ks})\zeta ^{k}=0,
\end{equation*}%
and their conjugations.

\begin{definition}
Let $(M,\mathcal{C})$ be a $n$ - dimensional complex Cartan space. $(M,%
\mathcal{C})$ is called complex Landsberg-Cartan space if $B\Gamma $ is $%
h^{\ast }-$ metrical, (i.e., $h_{||k}^{\bar{j}i}=h_{||\bar{k}}^{\bar{j}i}=0,$
where $"\shortmid \shortmid "$ is $h^{\ast }-$covariant derivatives with
respect to $B\Gamma $).
\end{definition}

\begin{theorem}
Let $(M,\mathcal{C})$ be a $n$ - dimensional complex Cartan space. Then $(M,%
\mathcal{C})$ is a complex Landsberg-Cartan space if and only if $B\Gamma $
is of $(1,0)$ - type.
\end{theorem}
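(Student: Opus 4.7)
The idea is to compare the $h^{\ast}$-covariant derivative of the metric with respect to $B\Gamma$ to that with respect to the Chern--Cartan connection, using that (i) $B\Gamma$ and $C{-}C$ share the $(c.n.c.)$ coefficients $N_{ji}$ and the pure-holomorphic block $B_{jk}^{i}=H_{jk}^{i}$, while (ii) $C{-}C$ is already $h^{\ast}$-metrical, i.e.\ $h^{\bar{j}i}_{|k}=0$. Hence the only possible obstruction to $h^{\ast}$-metricality of $B\Gamma$ lies in its mixed coefficient $B_{\bar{j}k}^{\bar{\imath}}=(\dot{\partial}^{\bar{\imath}}N_{ks})\zeta^{s}\zeta_{\bar{j}}$, which is precisely what must vanish for $B\Gamma$ to be of $(1,0)$-type.

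Concretely, I would first expand $h^{\bar{j}i}_{||k}$ in the adapted frame and split off the two pieces that reproduce the $C{-}C$ covariant derivative,
\begin{equation*}
h^{\bar{j}i}_{||k}=\bigl(\delta_{k}^{\ast}h^{\bar{j}i}+h^{\bar{j}a}H_{ak}^{i}\bigr)+h^{\bar{a}i}B_{\bar{a}k}^{\bar{j}}=h^{\bar{j}i}_{|k}+h^{\bar{a}i}B_{\bar{a}k}^{\bar{j}}=h^{\bar{a}i}B_{\bar{a}k}^{\bar{j}}.
\end{equation*}
Substituting the explicit form of $B_{\bar{a}k}^{\bar{j}}$ and using the identity $h^{\bar{a}i}\zeta_{\bar{a}}=\zeta^{i}$, this collapses to the clean factorization
\begin{equation*}
h^{\bar{j}i}_{||k}=\zeta^{i}\,(\dot{\partial}^{\bar{j}}N_{ks})\,\zeta^{s}.
\end{equation*}

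The equivalence is then a point-wise argument on $\widetilde{T^{\prime\ast}M}$. At any point with $\zeta\neq 0$ some component $\zeta^{i_{0}}$ is non-zero, so vanishing of $h^{\bar{j}i}_{||k}$ for all $i$ is equivalent to the scalar equation $(\dot{\partial}^{\bar{j}}N_{ks})\zeta^{s}=0$ for all $\bar{j}$ and $k$. Exactly the same factoring, this time on the $\zeta_{\bar{a}}$ factor, shows that $B_{\bar{a}k}^{\bar{j}}=0$ is equivalent to the very same scalar equation. Therefore $h^{\bar{j}i}_{||k}=0$ if and only if $B_{\bar{j}k}^{\bar{\imath}}=0$, i.e.\ $B\Gamma$ is of $(1,0)$-type; the companion assertion $h^{\bar{j}i}_{||\bar{k}}=0$ follows from the Hermitian structure by complex conjugation, so the full $h^{\ast}$-metrical condition is characterised by the single vanishing $B_{\bar{j}k}^{\bar{\imath}}=0$.

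The main technical point is the first step: writing out the expansion with the correct signs and recognising that the pure-holomorphic piece is exactly $h^{\bar{j}i}_{|k}$ for the Chern--Cartan connection, so that the sole residue of the computation is the mixed term $h^{\bar{a}i}B_{\bar{a}k}^{\bar{j}}$. Once this is granted, the identification $h^{\bar{a}i}\zeta_{\bar{a}}=\zeta^{i}$ does the rest, cleanly decoupling the index $i$ and reducing both conditions to the same scalar condition $(\dot{\partial}^{\bar{j}}N_{ks})\zeta^{s}=0$.
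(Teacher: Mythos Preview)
Your approach is essentially the paper's: both expand $h^{\bar{j}i}_{||k}$, observe that the part involving $B^{i}_{jk}=H^{i}_{jk}$ reproduces the Chern--Cartan derivative $h^{\bar{j}i}_{|k}=0$, and conclude that the obstruction is exactly the mixed term $h^{\bar{m}i}B^{\bar{j}}_{\bar{m}k}$.

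Two minor remarks. First, in the paper's conventions (cf.\ $D_{\delta^{\ast}_{k}}\dot{\partial}^{i}=-H^{i}_{jk}\dot{\partial}^{j}$) the covariant derivative of $h^{\bar{j}i}$ carries \emph{minus} signs, so the expansion reads $h^{\bar{j}i}_{||k}=\delta^{\ast}_{k}h^{\bar{j}i}-h^{\bar{j}l}B^{i}_{lk}-h^{\bar{m}i}B^{\bar{j}}_{\bar{m}k}$ and hence $h^{\bar{j}i}_{||k}=-h^{\bar{m}i}B^{\bar{j}}_{\bar{m}k}$; your sign choice is the opposite, which is harmless for the equivalence but should be aligned with the paper. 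Second, your detour through the explicit form $B^{\bar{j}}_{\bar{a}k}=(\dot{\partial}^{\bar{j}}N_{ks})\zeta^{s}\zeta_{\bar{a}}$ and the pointwise ``some $\zeta^{i_{0}}\neq 0$'' argument is unnecessary: since $(h^{\bar{m}i})$ is positive definite (hence invertible), $h^{\bar{m}i}B^{\bar{j}}_{\bar{m}k}=0$ is immediately equivalent to $B^{\bar{j}}_{\bar{m}k}=0$, which is exactly how the paper closes the proof.
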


\begin{proof} We know that $h^{\ast }-$ covariant derivatives with respect to $%
B\Gamma $ of $h^{\bar{j}i}$ is $h_{||k}^{\bar{j}i}=\delta _{k}h^{\bar{j}%
i}-h^{\bar{j}l}B_{lk}^{i}-h^{\bar{m}i}B_{\bar{m}k}^{\bar{j}}.$ Since $%
B_{jk}^{i}=H_{jk}^{i}$ and $C-C$ connection is metrical, then $h_{||k}^{\bar{%
j}i}=-h^{\bar{m}i}B_{\bar{m}k}^{\bar{j}}.$ Thus, $h_{||k}^{\bar{j}i}=0$
if and only if $B_{\bar{m}k}^{\bar{j}}=0$, i.e., $B\Gamma $ is of $%
(1,0)$ - type.
\end{proof}

Note that any complex Berwald-Cartan space is a complex Landsberg-Cartan
space.

\begin{theorem}
Let $(M,\mathcal{C})$ be a $n$ - dimensional complex Cartan space. $(M,%
\mathcal{C})$ is a K\"{a}hler-Cartan space if and only if it is a
Landsberg-Cartan space with weakly K\"{a}hler-Cartan property.
\end{theorem}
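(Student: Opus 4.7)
The plan is to exploit the identity $(\dot{\partial}^{\bar{\imath}}N_{ks})\zeta^{k}=0$ already recorded among the properties of $B\Gamma$ in this section (note: here $k$ is summed against the \emph{first} slot of $N$), together with the auxiliary fact $\dot{\partial}^{\bar{\imath}}\zeta^{j}=h^{\bar{\imath}j}$. The latter follows from $\zeta^{j}=h^{\bar{m}j}\zeta_{\bar{m}}$ by combining the symmetry $\dot{\partial}^{\bar{\imath}}h^{\bar{m}j}=\dot{\partial}^{\bar{m}}h^{\bar{\imath}j}$ of third partials of $H$ with the $(0,0)$-homogeneity of $h^{\bar{m}j}$ in $\bar{\zeta}$. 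By Theorem 3.2, Landsberg-Cartan is equivalent to $B_{\bar{\jmath}k}^{\bar{\imath}}=0$, i.e.\ to $(\dot{\partial}^{\bar{\imath}}N_{ks})\zeta^{s}=0$ with $s$ summed against the \emph{second} slot of $N$, while K\"{a}hler-Cartan amounts to the symmetry $N_{jk}=N_{kj}$ (since $T_{jk}^{\ast i}\zeta_{i}=N_{jk}-N_{kj}$). The entire argument will be driven by the contrast between the first-slot identity (automatic) and the second-slot condition (genuine), with the symmetry $N_{jk}=N_{kj}$ as the bridge between the two.

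For the direction $(\Rightarrow)$, K\"{a}hler-Cartan trivially implies weakly K\"{a}hler-Cartan, and
\begin{equation*}
(\dot{\partial}^{\bar{\imath}}N_{ks})\zeta^{s}=(\dot{\partial}^{\bar{\imath}}N_{sk})\zeta^{s}=0
\end{equation*}
by the first-slot identity, so Landsberg-Cartan holds. For the direction $(\Leftarrow)$, I would apply $\dot{\partial}^{\bar{\imath}}$ to the weakly K\"{a}hler-Cartan relation $(N_{jk}-N_{kj})\zeta^{j}=0$, using the Leibniz rule together with $\dot{\partial}^{\bar{\imath}}\zeta^{j}=h^{\bar{\imath}j}$, to obtain
\begin{equation*}
(\dot{\partial}^{\bar{\imath}}N_{jk})\zeta^{j}-(\dot{\partial}^{\bar{\imath}}N_{kj})\zeta^{j}+(N_{jk}-N_{kj})h^{\bar{\imath}j}=0.
\end{equation*}
The first term vanishes by the first-slot identity and the second by the Landsberg-Cartan hypothesis, leaving $(N_{jk}-N_{kj})h^{\bar{\imath}j}=0$. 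Since $(h^{\bar{\imath}j})$ is positive definite Hermitian and therefore invertible, this forces $N_{jk}=N_{kj}$, which is the K\"{a}hler-Cartan condition.

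The only real obstacle is careful index bookkeeping: one must distinguish the first-slot contraction $(\dot{\partial}^{\bar{\imath}}N_{ks})\zeta^{k}=0$ (an identity requiring no hypothesis) from the superficially similar second-slot contraction $(\dot{\partial}^{\bar{\imath}}N_{ks})\zeta^{s}=0$ (which is the Landsberg-Cartan condition), and to track the Leibniz correction $\dot{\partial}^{\bar{\imath}}\zeta^{j}=h^{\bar{\imath}j}$, which is exactly what produces the decisive invertible factor $h^{\bar{\imath}j}$ in the reverse direction.
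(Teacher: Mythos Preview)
Your proof is correct and is essentially the same as the paper's. The paper packages the key computation into the single identity
\[
(\dot{\partial}^{\bar{\imath}}N_{ks})\zeta^{s}=\dot{\partial}^{\bar{\imath}}\bigl[(N_{ks}-N_{sk})\zeta^{s}\bigr]-(N_{ks}-N_{sk})h^{\bar{\imath}s},
\]
whereas you unpack the Leibniz rule directly; both arguments use the first-slot identity $(\dot{\partial}^{\bar{\imath}}N_{ks})\zeta^{k}=0$, the Landsberg condition on the second-slot contraction, and the invertibility of $h^{\bar{\imath}j}$ in exactly the same way.
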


\begin{proof} Since $(\dot{\partial}^{\bar{\imath}}N_{ks})\zeta ^{k}=0$, it is easy
to check that%
\begin{equation}
(\dot{\partial}^{\bar{\imath}}N_{ks})\zeta ^{s}=\dot{\partial}^{\bar{\imath}%
}[(N_{ks}-N_{sk})\zeta ^{s}]-(N_{ks}-N_{sk})h^{\bar{\imath}s}.  \label{IV.10}
\end{equation}

First, we suppose that $\mathcal{C}$ is a K\"{a}hler-Cartan metric. Then, (%
\ref{IV.10}) implies $(\dot{\partial}^{\bar{\imath}}N_{ks})\zeta ^{s}=0,$
and so $B_{\bar{m}k}^{\bar{j}}=0.$

Conversely, if $\mathcal{C}$ is a Landsberg-Cartan metric with weakly K\"{a}%
hler-Cartan property, then (\ref{IV.10}) becomes $(N_{ks}-N_{ks})h^{\bar{%
\imath}s}=0,$ which gives $N_{ks}=N_{ks}.$ This completes the proof.
\end{proof}

\begin{definition}
Let $(M,\mathcal{C})$ be a $n$ - dimensional complex Cartan space. $(M,%
\mathcal{C})$ is called strongly Berwald-Cartan space if it is weakly K\"{a}%
hler-Cartan and $H_{jk}^{i}(z).$
\end{definition}

Obviously, the strongly Berwald-Cartan spaces define a subclass of the
Berwald-Cartan spaces. Moreover, we can prove.

\begin{corollary}
Let $(M,\mathcal{C})$ be a $n$ - dimensional complex Cartan space. If $(M,%
\mathcal{C})$ is strongly Berwald-Cartan then it is a K\"{a}hler-Cartan
space.
\end{corollary}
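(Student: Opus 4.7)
The plan is to chain together the structural results already proved in the section, since each hypothesis of the corollary matches one ingredient needed by Theorem 3.3.

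First I would unpack the hypothesis: by Definition 3.2, a strongly Berwald-Cartan space is a complex Cartan space which is simultaneously weakly Kähler-Cartan and has $H^i_{jk} = H^i_{jk}(z)$, i.e.\ Berwald-Cartan in the sense of Section~3. Next, I would invoke the remark recorded immediately after Theorem 3.2, namely that every complex Berwald-Cartan space is a complex Landsberg-Cartan space. (The reason is transparent: if $H^i_{jk}$ depends on $z$ alone, then so do $B^i_{jk} = H^i_{jk}$, and the $\bar\partial$-block $B^{\bar\jmath}_{\bar m k} = (\dot\partial^{\bar\jmath} N_{ks})\zeta^s \zeta_{\bar m}$ is then forced by homogeneity to vanish, so $B\Gamma$ is of $(1,0)$-type; Theorem 3.2 then gives the $h^*$-metrical property.)

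Combining these two observations, the space is Landsberg-Cartan and weakly Kähler-Cartan. At this point I would apply Theorem 3.3 directly in the direction that Landsberg-Cartan plus weakly Kähler-Cartan implies Kähler-Cartan, which concludes the proof.

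There is essentially no obstacle here; the statement is a one-line corollary obtained by composing Definition 3.2 with the remark following Theorem 3.2 and with Theorem 3.3. The only thing worth being careful about is not to quote Theorem 3.3 in the wrong direction (the hard half is that weakly Kähler-Cartan plus Landsberg upgrades to full Kähler, not the trivial converse). Writing out the identity $(\dot\partial^{\bar\imath}N_{ks})\zeta^s = \dot\partial^{\bar\imath}[(N_{ks}-N_{sk})\zeta^s] - (N_{ks}-N_{sk})h^{\bar\imath s}$ once more would be redundant, since it was the substance of the proof of Theorem 3.3 and is not needed again.
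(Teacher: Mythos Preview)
Your proof is correct, but the paper does not route through Theorem~3.3. Instead it gives a short direct computation: from the weakly K\"ahler-Cartan condition $(N_{jk}-N_{kj})\zeta^{j}=0$ one differentiates with respect to $\zeta_{\bar m}$; the Berwald-Cartan hypothesis in the form $\dot\partial^{\bar m}N_{jk}=0$ (equivalently, $N_{jk}$ holomorphic in $\zeta$, Theorem~3.1) kills the derivative term, leaving $(N_{jk}-N_{kj})h^{\bar m j}=0$, hence $N_{jk}=N_{kj}$.

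The difference is that your argument passes through the intermediate Landsberg-Cartan property and then invokes Theorem~3.3, while the paper exploits the stronger Berwald hypothesis $\dot\partial^{\bar m}N_{jk}=0$ directly, bypassing both the Landsberg notion and Theorem~3.3. Your route is perfectly legitimate and arguably cleaner from a structural standpoint, since it shows the corollary as a formal consequence of results already on the shelf; the paper's computation is essentially a streamlined rerun of the key identity from the proof of Theorem~3.3, specialised to the Berwald case where one term drops out immediately rather than after a contraction.
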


\begin{proof}
Due to the weakly
K\"{a}hler-Cartan property, we have $(N_{jk}-N_{kj})\zeta ^j=0$. Now, differentiating this with respect to $\zeta _{\bar{m}},$ and
using $\dot{\partial}^{\bar{m}}N_{jk}=0,$ it results $(N_{jk}-N_{kj})h^{\bar{%
m}j}=0.$ From here, we obtain $N_{jk}=N_{kj},$ i.e., the space is
K\"{a}hler-Cartan.
\end{proof}

\section{Complex Cartan-Randers metrics}

\setcounter{equation}{0}On the complex manifold $M$ we consider $a:=a_{i\bar{%
j}}(z)d^{\ast }z^{i}\otimes d^{\ast }\bar{z}^{j}$ a Hermitian positive
metric with its inverse $a^{\bar{j}i}(z)$ and $b=b_{i}(z)d^{\ast }z^{i}$ a
differential $(1,0)-$ form. Then, $\alpha ^{2}(z,\zeta ):=a^{\bar{j}%
i}(z)\zeta_{\bar{j}}\zeta _{i},$ ($\alpha :T^{\prime \ast }M\rightarrow
\mathbb{R}^{+}),$ defines a purely Hermitian complex Cartan metric on $M$.
Also, denoting by $b^{i}:=a^{\bar{j}i}(z)b_{\bar{j}},$ we construct a
function $\mathcal{\tilde{C}}:T^{\prime \ast }M\rightarrow \mathbb{R}^{+},$%
\begin{equation}
\mathcal{\tilde{C}}(z,\zeta ):=\alpha +|\beta |,  \label{II.1}
\end{equation}%
where%
\begin{equation}
|\beta |^{2}=\beta \overline{\beta }\;\mbox{
with   }\;\beta (z,\zeta )=b^{i}(z)\zeta _{i}.  \notag
\end{equation}%
A direct computation gives%
\begin{equation}
\tilde{h}^{\bar{j}i}:=\frac{\partial ^{2}\mathcal{\tilde{C}}^{2}}{\partial
\zeta _{i}\partial \bar{\zeta}_{j}}=\frac{\mathcal{\tilde{C}}}{\alpha }a^{%
\bar{j}i}-\frac{\mathcal{\tilde{C}}}{2\alpha ^{3}}\zeta ^{\bar{j}}\zeta ^{i}+%
\frac{\mathcal{\tilde{C}}}{2|\beta |}b^{\bar{j}}b^{i}+\frac{1}{2\mathcal{%
\tilde{C}}^{2}}\tilde{\zeta}^{\bar{j}}\tilde{\zeta}^{i},  \label{II.1''}
\end{equation}%
where%
\begin{equation*}
\zeta ^{i}:=\dot{\partial}^{i}\alpha ^{2}\;\;;\;\;\tilde{\zeta}^{i}:=\frac{%
\mathcal{\tilde{C}}}{\alpha }\zeta ^{i}+\frac{\mathcal{\tilde{C}}\overline{%
\beta }}{|\beta |}b^{i}.
\end{equation*}

Applying Proposition 2.2, from \cite{Al-Mu4}, it results.

\begin{proposition}
Corresponding to the function (\ref{II.1}), we have

i) $\displaystyle\tilde{h}_{i\bar{j}}=\frac{\alpha }{\mathcal{\tilde{C}}}a_{i%
\bar{j}}+\frac{|\beta |(\alpha ||b||^{2}+|\beta |)}{\mathcal{\tilde{C}}%
^{2}\gamma }\zeta _{i}\zeta _{\bar{j}}-\frac{\alpha ^{3}}{\mathcal{\tilde{C}}%
\gamma }b_{i}b_{\bar{j}}-\frac{\alpha }{\mathcal{\tilde{C}}\gamma }(\bar{%
\beta}\zeta _{i}b_{\bar{j}}+\beta b_{i}\zeta _{\bar{j}});$

ii) $\displaystyle\det \left( \tilde{h}^{\bar{j}i}\right) =\left( \frac{%
\mathcal{\tilde{C}}}{\alpha }\right) ^{n}\frac{\gamma }{2\alpha |\beta |}%
\det \left( a^{\bar{j}i}\right) ,$

where $||b||^{2}:=a^{\bar{j}i}b_{i}b_{\bar{j}}\;;\;\gamma :=\mathcal{\tilde{C%
}}^{2}+\alpha ^{2}(||b||^{2}-1).$
\end{proposition}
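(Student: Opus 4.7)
The proof is a two-step linear-algebra computation on formula (\ref{II.1''}) for $\tilde{h}^{\bar{j}i}$. The key structural observation is that $\tilde{h}^{\bar{j}i}$ is a rank-at-most-two Hermitian perturbation of the purely Hermitian tensor $\frac{\mathcal{\tilde{C}}}{\alpha}a^{\bar{j}i}$. Indeed, the relation $\tilde{\zeta}^{i}=\frac{\mathcal{\tilde{C}}}{\alpha}\zeta^{i}+\frac{\mathcal{\tilde{C}}\overline{\beta}}{|\beta|}b^{i}$ places $\tilde{\zeta}$ in the plane spanned by $\zeta$ and $b$, so expanding $\tilde{\zeta}^{\bar{j}}\tilde{\zeta}^{i}$ in (\ref{II.1''}) and collecting terms yields
\[
\tilde{h}^{\bar{j}i}=\frac{\mathcal{\tilde{C}}}{\alpha}a^{\bar{j}i}+C_{1}\zeta^{\bar{j}}\zeta^{i}+C_{2}b^{\bar{j}}b^{i}+C_{3}\zeta^{\bar{j}}b^{i}+C_{4}b^{\bar{j}}\zeta^{i}
\]
for explicit scalars $C_{1},\dots,C_{4}$ depending only on $\alpha$, $|\beta|$, $\beta$, $\overline{\beta}$, $\mathcal{\tilde{C}}$. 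Both (i) and (ii) then follow from Sherman--Morrison--Woodbury technology.

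For (i) I would pose the ansatz
\[
\tilde{h}_{i\bar{j}}=A\,a_{i\bar{j}}+B\,\zeta_{i}\zeta_{\bar{j}}+C\,b_{i}b_{\bar{j}}+D\,\zeta_{i}b_{\bar{j}}+E\,b_{i}\zeta_{\bar{j}},
\]
dictated by the complete list of available $(1,\bar{1})$-tensors, and impose the inversion identity $\tilde{h}_{i\bar{k}}\tilde{h}^{\bar{k}j}=\delta_{i}^{j}$. Using the basic contractions $a^{\bar{k}j}\zeta_{\bar{k}}=\zeta^{j}$, $a^{\bar{k}j}b_{\bar{k}}=b^{j}$, $\zeta_{i}\zeta^{i}=\alpha^{2}$, $b_{i}b^{i}=||b||^{2}$, $\zeta_{i}b^{i}=\beta$, $b_{i}\zeta^{i}=\overline{\beta}$, the identity separates into a scalar equation pinning down $A=\alpha/\mathcal{\tilde{C}}$ together with four scalar equations for $(B,C,D,E)$, obtained by reading off the coefficients of the four independent rank-one dyads $\zeta_{i}\zeta^{j}$, $b_{i}b^{j}$, $\zeta_{i}b^{j}$, $b_{i}\zeta^{j}$. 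The quantity $\gamma=\mathcal{\tilde{C}}^{2}+\alpha^{2}(||b||^{2}-1)$ emerges naturally as the determinant of the resulting reduced $2\times 2$ system, and solving it reproduces exactly the stated expression for $\tilde{h}_{i\bar{j}}$.

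For (ii) I would apply the matrix-determinant lemma
\[
\det(M+UKV^{\ast})=\det(M)\cdot\det\!\bigl(I_{2}+KV^{\ast}M^{-1}U\bigr),
\]
with $M=\frac{\mathcal{\tilde{C}}}{\alpha}a^{\bar{j}i}$ (so $\det M=(\mathcal{\tilde{C}}/\alpha)^{n}\det a^{\bar{j}i}$ and $M^{-1}=\frac{\alpha}{\mathcal{\tilde{C}}}a_{i\bar{j}}$), with $U$ and $V^{\ast}$ assembled from the two vectors $(\zeta^{\bar{\cdot}},b^{\bar{\cdot}})$ and $(\zeta^{\cdot},b^{\cdot})$, and with $K$ the $2\times 2$ matrix formed by the scalars $C_{i}$ identified above. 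The capacitance matrix $V^{\ast}M^{-1}U$ has entries that reduce, via the same contractions, to scalar combinations of $\alpha^{2}$, $||b||^{2}$, $\beta$, $\overline{\beta}$ scaled by $\alpha/\mathcal{\tilde{C}}$, and direct expansion of the $2\times 2$ determinant, followed by $\mathcal{\tilde{C}}=\alpha+|\beta|$, collapses the correction factor to $\gamma/(2\alpha|\beta|)$. The main obstacle in both parts is purely algebraic bookkeeping: juggling $\alpha$, $|\beta|$, $\beta$, $\overline{\beta}$, $\mathcal{\tilde{C}}$, $||b||^{2}$ and collapsing long polynomial expressions to $\gamma$. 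No further geometric input is needed beyond the rank-at-most-two observation, which is the single conceptual step of the proof.
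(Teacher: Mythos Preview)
Your proposal is correct. The paper itself does not give a proof but simply cites an external result: ``Applying Proposition 2.2, from \cite{Al-Mu4}, it results.'' That reference treats the analogous formulas for complex Randers metrics on $T'M$, and the present statement is obtained by transcribing them to the cotangent side. Your route is a self-contained linear-algebra argument: identify $\tilde h^{\bar j i}$ as a rank-two Hermitian perturbation of $\frac{\mathcal{\tilde C}}{\alpha}a^{\bar j i}$, recover the inverse via an ansatz in the span of the dyads $\zeta_i\zeta_{\bar j},\,b_ib_{\bar j},\,\zeta_ib_{\bar j},\,b_i\zeta_{\bar j}$, and obtain the determinant from the matrix-determinant lemma. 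This is exactly how the cited proposition is proved in the first place, so the two approaches are not genuinely different in content; yours simply unpacks what the paper outsources. The advantage of your write-up is that it is independent of \cite{Al-Mu4} and makes transparent why $\gamma$ appears (as the $2\times2$ capacitance determinant), at the cost of carrying out the algebraic bookkeeping explicitly.
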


Having formula for $\det \left( \tilde{h}^{\bar{j}i}\right) $, we can say
that $\tilde{h}^{\bar{j}i}$ is positive definite if and only if $\gamma >0,$
for any $\zeta \in \widetilde{T^{\prime \ast }M}.$ Also, it is obvious that
the function $\mathcal{\tilde{C}}=\alpha +|\beta |$ satisfies the conditions
$i)-iii)$ from Definition 2.1. So, we have proved the following result.

\begin{theorem}
The function (\ref{II.1}) with $\gamma >0$ is a complex Cartan metric.
\end{theorem}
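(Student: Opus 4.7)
The plan is to verify the four conditions of Definition~2.1 for $\mathcal{\tilde C}=\alpha+|\beta|$, with all the substance concentrated in iv). Conditions i)--iii) are formal: since $(a^{\bar j i})$ is positive definite, $\alpha$ is smooth and strictly positive on $\widetilde{T^{\prime *}M}$; the function $|\beta|=\sqrt{\beta\bar\beta}$ is smooth wherever $\beta\neq 0$, so $\mathcal{\tilde C}$ is smooth on the admissible open subset and vanishes precisely at $\zeta=0$; and the $|\lambda|$-homogeneity follows at once from $\alpha(z,\lambda\zeta)=|\lambda|\alpha(z,\zeta)$ together with $|\beta(z,\lambda\zeta)|=|\lambda|\,|\beta(z,\zeta)|$.

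For iv) the core fact is the formula
\[
\det(\tilde h^{\bar j i}) \;=\; \left(\frac{\mathcal{\tilde C}}{\alpha}\right)^{\!n}\frac{\gamma}{2\alpha|\beta|}\det(a^{\bar j i})
\]
supplied by Proposition~4.1~ii). The prefactor and $\det(a^{\bar j i})$ are strictly positive on the admissible domain, so the sign of $\det(\tilde h^{\bar j i})$ agrees with the sign of $\gamma$; under the hypothesis $\gamma>0$ the determinant is therefore positive, which is a necessary condition for positive definiteness.

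The main obstacle is that a positive determinant alone does not force positive definiteness. I would close this gap by a standard deformation argument: replace $b$ by $tb$ for $t\in[0,1]$, producing a continuous one-parameter family of Hermitian matrices $(\tilde h^{\bar j i}_t)$ with $\tilde h^{\bar j i}_0=a^{\bar j i}$ positive definite (by hypothesis on $a$) and $\tilde h^{\bar j i}_1=\tilde h^{\bar j i}$. A direct computation yields $\gamma_t=2t\alpha|\beta|+t^2(|\beta|^2+\alpha^2\|b\|^2)>0$ for all $t\in(0,1]$ (with $b$ nowhere vanishing, as forced by $\gamma>0$), so $\det(\tilde h^{\bar j i}_t)>0$ throughout the deformation. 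Eigenvalues of a continuous family of Hermitian matrices depend continuously on $t$; since they are positive at $t=0$ and never cross zero along the path, they remain positive at $t=1$, giving the required positive definiteness of $(\tilde h^{\bar j i})$ and completing the verification of iv).
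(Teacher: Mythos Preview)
Your argument follows the paper's route exactly: conditions i)--iii) are disposed of formally, and condition iv) is reduced to the determinant formula of Proposition~4.1~ii). The paper stops there, simply asserting that $(\tilde h^{\bar j i})$ is positive definite if and only if $\gamma>0$; you correctly observe that a positive determinant by itself does not force positive definiteness and supply a clean one-parameter deformation $b\mapsto tb$ to close this gap. The deformation is continuous at $t=0$ (each term of~(\ref{II.1''}) has a finite limit, and the total is $a^{\bar j i}$), and your computation $\gamma_t=2t\alpha|\beta|+t^2(|\beta|^2+\alpha^2\|b\|^2)>0$ for $t\in(0,1]$ on the admissible set is correct, so the eigenvalue argument goes through; your version is therefore a strictly more complete proof of the same statement by the same method.
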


Further on, the function (\ref{II.1}) with $\gamma >0$ \ is called a \textit{%
complex Cartan-Randers metric} and the pair $(M,\alpha +|\beta |)$ is a
\textit{complex Cartan-Randers space}.

Note that a complex Cartan-Randers metric can be purely Hermitian if and
only if $\alpha ^{2}||b||^{2}=|\beta |^{2}$. Since any purely Hermitian
metric is a complex Berwald-Cartan metric, we now focus on non-purely
Hermitian complex Cartan-Randers metrics.

\medskip Once obtained the metric tensor of a complex Cartan-Randers space,
it is a technical computation to get the expression of the coefficients $%
\tilde{N}_{ji}$ of Chern-Cartan $(c.n.c.),$ with respect to the metric (\ref%
{II.1}). After a lot of trivial calculus, we obtain a simplified writing for
these%
\begin{equation}
\tilde{N}_{ji}=\overset{a}{N_{ji}}-\frac{1}{\gamma }\left( \frac{\partial
^{\ast }b_{\bar{r}}}{\partial z^{i}}\zeta ^{\bar{r}}-\frac{\alpha \beta }{%
|\beta |}\frac{\partial ^{\ast }b^{\bar{r}}}{\partial z^{i}}b_{\bar{r}%
}\right) \xi _{j}-\frac{\beta }{|\beta |}k_{j\bar{r}}\frac{\partial ^{\ast
}b^{\bar{r}}}{\partial z^{i}},  \label{II.8}
\end{equation}%
where $\xi _{j}:=\bar{\beta}\zeta _{j}+\alpha ^{2}b_{j},$ $k_{j\bar{r}%
}:=\alpha a_{j\bar{r}}+\frac{\alpha ||b||^{2}+|\beta |}{\gamma }\zeta
_{j}\zeta _{\bar{r}}-\frac{\alpha \mathcal{\tilde{C}}\beta }{\gamma |\beta |}%
b_{j}\zeta _{\bar{r}}$ and $\overset{a}{N_{ji}}:=-a_{j\bar{r}}\frac{\partial
^{\ast }a^{\bar{r}l}}{\partial z^{i}}\zeta _{l}$.

\begin{theorem}
Let $(M,\mathcal{\tilde{C}})$ be a connected non-purely Hermitian complex
Cartan-Randers space. Then, $(M,\mathcal{\tilde{C}})$ is a Berwald-Cartan
space if and only if $\overset{a}{\delta _{k}^{\ast }}|\beta |=0$, where $%
\overset{a}{\delta _{k}^{\ast }}$ is the adapted \ frame \ corresponding \
to $\overset{a}{N_{ji}}.$ Moreover, $\tilde{N}_{ji}=\overset{a}{N_{ji}}.$
\end{theorem}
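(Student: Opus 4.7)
The plan is to reduce the Berwald--Cartan condition on $(M,\mathcal{\tilde C})$ to a scalar equation on $|\beta|$ by invoking Theorem 3.1, which characterizes Berwald--Cartan as the holomorphicity of the Chern--Cartan $(c.n.c.)$ coefficients in $\zeta$. Since $\overset{a}{N_{ji}}=-a_{j\bar r}(\partial^{\ast}_i a^{\bar r l})\zeta_l$ is linear (hence holomorphic) in $\zeta$, Theorem 3.1 applied to $\mathcal{\tilde C}$ becomes: $(M,\mathcal{\tilde C})$ is Berwald--Cartan if and only if the correction $T_{ji}:=\tilde N_{ji}-\overset{a}{N_{ji}}$ read off from (\ref{II.8}) is holomorphic in $\zeta$. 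The task then splits into identifying when the non-holomorphic dependence in $T_{ji}$ is forced to cancel, and matching that cancellation condition with $\overset{a}{\delta^{\ast}_k}|\beta|=0$.

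First I would establish two preparatory identities. A direct check shows $\overset{a}{\delta^{\ast}_k}\alpha^{2}=0$ (the coefficients $\overset{a}{N_{ji}}$ are precisely the Chern--Cartan $(c.n.c.)$ for the purely Hermitian metric $\alpha$, so $\alpha$ is horizontal). Using $b^{i}=a^{\bar j i}b_{\bar j}$, a short manipulation cancels the $\partial^{\ast}_k a^{\bar j i}$ terms and leaves
\[
\overset{a}{\delta^{\ast}_k}\beta=(\partial^{\ast}_k b_{\bar r})\zeta^{\bar r},\qquad \overset{a}{\delta^{\ast}_k}\bar\beta=(\partial^{\ast}_k b^{\bar r})\zeta_{\bar r},
\]
hence
\[
2|\beta|\,\overset{a}{\delta^{\ast}_k}|\beta|=\bar\beta\,\overset{a}{\delta^{\ast}_k}\beta+\beta\,\overset{a}{\delta^{\ast}_k}\bar\beta.
\]
These identities already show that $\overset{a}{\delta^{\ast}_k}|\beta|$ is the natural scalar to look for inside (\ref{II.8}), since its two building blocks are exactly the expressions containing the $z$-derivatives of $b_{\bar r}$ and $b^{\bar r}$ that appear there.

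Next I would rewrite $T_{ji}$ using these identities. The first bracket in (\ref{II.8}) is literally $\overset{a}{\delta^{\ast}_i}\beta-(\alpha\beta/|\beta|)(\partial^{\ast}_i b^{\bar r})b_{\bar r}$, and the last term $-(\beta/|\beta|)k_{j\bar r}(\partial^{\ast}_i b^{\bar r})$ can be expanded via the explicit form of $k_{j\bar r}$ and regrouped against the $\xi_j=\bar\beta\zeta_j+\alpha^{2}b_j$ contribution. Collecting coefficients of $\zeta_j$, $b_j$ and the remaining $a_{j\bar r}$-piece, and repeatedly using $\mathcal{\tilde C}=\alpha+|\beta|$, $\gamma=\mathcal{\tilde C}^{2}+\alpha^{2}(\|b\|^{2}-1)$, the correction takes the factored form
\[
T_{ji}=\Phi_{ji}\cdot\bigl(\bar\beta\,\overset{a}{\delta^{\ast}_i}\beta+\beta\,\overset{a}{\delta^{\ast}_i}\bar\beta\bigr)+\Psi_{ji}\cdot\overset{a}{\delta^{\ast}_i}\beta+\overline{\Psi}_{ji}\cdot\overset{a}{\delta^{\ast}_i}\bar\beta,
\]
where the coefficient tensors $\Phi_{ji},\Psi_{ji}$ are explicit combinations of $a_{j\bar r},\zeta_j,b_j$ with rational expressions in $\alpha,|\beta|,\gamma,\beta$.

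With this in hand, the ($\Leftarrow$) direction is immediate: if $\overset{a}{\delta^{\ast}_k}|\beta|=0$ then $\bar\beta\,\overset{a}{\delta^{\ast}_k}\beta+\beta\,\overset{a}{\delta^{\ast}_k}\bar\beta=0$, and a further direct check (using the $(1,0)$-homogeneity of the two summands) shows $\overset{a}{\delta^{\ast}_k}\beta=\overset{a}{\delta^{\ast}_k}\bar\beta=0$ separately, so the factored expression gives $T_{ji}=0$, i.e.\ $\tilde N_{ji}=\overset{a}{N_{ji}}$; by Theorem 3.1 the space is Berwald--Cartan. For ($\Rightarrow$), holomorphicity in $\zeta$ of $\tilde N_{ji}$ means $T_{ji}$ is holomorphic, but $T_{ji}$ carries the genuinely non-holomorphic factors $|\beta|^{-1}$, $\bar\beta$, $\alpha$, $\gamma^{-1}$; I would use the non-purely Hermitian hypothesis $\alpha^{2}\|b\|^{2}\neq|\beta|^{2}$ (so $\gamma\neq \mathcal{\tilde C}^{2}+|\beta|^{2}-\alpha^{2}$ trivially, and $\beta$, $\bar\beta$ and $\alpha^{2}$ are algebraically independent as functions on $\widetilde{T^{\prime\ast}M}$) to conclude that holomorphicity in $\zeta$ forces each of the coefficients $\Phi_{ji},\Psi_{ji}$ to annihilate $\overset{a}{\delta^{\ast}_i}\beta$ and $\overset{a}{\delta^{\ast}_i}\bar\beta$, whence $\overset{a}{\delta^{\ast}_k}|\beta|=0$ and the ``moreover'' conclusion $\tilde N_{ji}=\overset{a}{N_{ji}}$. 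The main technical obstacle is precisely this last step: organizing the bracket in (\ref{II.8}) cleanly enough that the scalar $\overset{a}{\delta^{\ast}_k}|\beta|$ is visibly the only obstruction, and verifying that the tensors multiplying $\overset{a}{\delta^{\ast}_i}\beta$ and $\overset{a}{\delta^{\ast}_i}\bar\beta$ cannot conspire to a holomorphic residue under the non-purely Hermitian assumption.
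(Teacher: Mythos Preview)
Your outline has the right overall shape (reduce to the correction $T_{ji}=\tilde N_{ji}-\overset{a}{N_{ji}}$ and analyse its $\zeta$-dependence), but there is a genuine error in your $(\Leftarrow)$ argument, and your $(\Rightarrow)$ argument remains a sketch where the paper does something concrete.

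\textbf{The gap in $(\Leftarrow)$.} Your claim that $\overset{a}{\delta^{\ast}_k}|\beta|=0$ forces $\overset{a}{\delta^{\ast}_k}\beta=0$ and $\overset{a}{\delta^{\ast}_k}\bar\beta=0$ separately is false, and ``$(1,0)$-homogeneity'' cannot separate them: both $\bar\beta\,\overset{a}{\delta^{\ast}_k}\beta$ and $\beta\,\overset{a}{\delta^{\ast}_k}\bar\beta$ are bilinear of bidegree $(1,1)$ in $(\zeta,\bar\zeta)$. What the vanishing of their sum actually gives (reading off the coefficient of $\zeta_l\zeta_{\bar m}$) is only the rank-one relation
\[
b^{\bar m}a^{\bar r l}\,\partial^{\ast}_k b_{\bar r}+b^{l}\,\partial^{\ast}_k b^{\bar m}=0,
\]
which implies $\partial^{\ast}_k b^{\bar m}=c_k(z)\,b^{\bar m}$ for some scalar $c_k$, hence $\overset{a}{\delta^{\ast}_k}\bar\beta=c_k\bar\beta$ and $\overset{a}{\delta^{\ast}_k}\beta=-c_k\beta$. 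These are \emph{not} zero in general. Consequently, in your factored expression for $T_{ji}$ the $\Psi$- and $\bar\Psi$-terms survive, and you still have to exhibit a nontrivial cancellation. The paper does exactly this: it differentiates $\overset{a}{\delta^{\ast}_k}|\beta|=0$ to obtain the proportionality $\partial^{\ast}_k b^{\bar m}=-\tfrac{\bar\beta}{|\beta|^2}b^{\bar m}\zeta^{\bar r}\partial^{\ast}_k b_{\bar r}$ and then substitutes this back into (\ref{II.8}); only then does $T_{ji}$ collapse to zero. Your argument needs this substitution step, not the (false) separate vanishing.

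\textbf{On $(\Rightarrow)$.} Your plan to invoke ``algebraic independence of $\beta,\bar\beta,\alpha^2$'' and ``non-holomorphic factors $|\beta|^{-1},\alpha,\gamma^{-1}$'' is not sharp enough to conclude: $T_{ji}$ mixes all of these through $\gamma$ and $k_{j\bar r}$, and it is not clear a priori that no holomorphic residue can remain. The paper avoids this by a more elementary and effective device: since Berwald--Cartan means $\tilde N_{ji}$ is a first-degree polynomial in $\zeta$, one clears denominators in (\ref{II.8}) and splits the resulting identity into its ``rational'' part and its ``irrational'' part (the piece multiplied by $\alpha|\beta|$, which is not polynomial in $\zeta,\bar\zeta$). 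Each part must vanish separately; contracting the two resulting equations with $b^{j}$ and with $\zeta^{j}$ yields a small linear system whose solution gives $(\tilde N_{ji}-\overset{a}{N_{ji}})\zeta^{j}=0$ and $\bar\beta\,\zeta^{\bar r}\partial^{\ast}_i b_{\bar r}+\beta\,\partial^{\ast}_i b^{\bar r}\zeta_{\bar r}=0$, i.e.\ $\overset{a}{\delta^{\ast}_i}|\beta|=0$. This rational/irrational splitting plus two well-chosen contractions is the missing mechanism in your sketch.
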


\begin{proof}
If $(M,\mathcal{\tilde{C}})$ is Berwald then $N_{jk}=H_{jk}^{i}(z)\zeta _{i},
$ which means that $N_{jk}$ are homogeneous polynomials in $\zeta _{i}$ of
first degree. Thus, using (\ref{II.8}) we have

$\alpha |\beta |\{2(\tilde{N}_{ji}-\overset{a}{N_{ji}})$

$+\frac{\beta }{|\beta |^{2}}\frac{\partial ^{\ast }b^{\bar{r}}}{\partial
z^{i}}[a_{j\bar{r}}(\alpha ^{2}||b||^{2}+|\beta |^{2})+||b||^{2}\zeta
_{j}\zeta _{\bar{r}}-\alpha ^{2}b_{j}b_{\bar{r}}-(\bar{\beta}\zeta _{\bar{r}%
}b_{j}+\beta b_{\bar{r}}\zeta _{j})]\}$

$+(\alpha ^{2}||b||^{2}+|\beta |^{2})(\tilde{N}_{ji}-\overset{a}{N_{ji}})+%
\frac{\partial ^{\ast }b_{\bar{r}}}{\partial z^{i}}\zeta ^{\bar{r}}\xi
_{j}+\beta \frac{\partial ^{\ast }b^{\bar{r}}}{\partial z^{i}}(2\alpha
^{2}a_{j\bar{r}}+\zeta _{j}\zeta _{\bar{r}}-\frac{\alpha ^{2}\beta }{|\beta
|^{2}}\zeta _{\bar{r}}b_{j})=0,$ \newline
which contains an irrational part and a rational one. Thus, we obtain%
\begin{eqnarray*}
&&-\beta \frac{\partial ^{\ast }b^{\bar{r}}}{\partial z^{i}}[a_{j\bar{r}%
}(\alpha ^{2}||b||^{2}+|\beta |^{2})+||b||^{2}\zeta _{j}\zeta _{\bar{r}%
}-\alpha ^{2}b_{j}b_{\bar{r}}-(\beta \zeta _{\bar{r}}b_{j}+\bar{\beta}b_{%
\bar{r}}\zeta _{j})] \\
&=&2|\beta |^{2}(\tilde{N}_{ji}-\overset{a}{N_{ji}})\;\mbox{  and  }\; \\
&&-(\bar{\beta}\zeta ^{\bar{r}}\frac{\partial ^{\ast }b_{\bar{r}}}{\partial
z^{i}}+\beta \frac{\partial ^{\ast }b^{\bar{r}}}{\partial z^{i}}\zeta _{\bar{%
r}})\zeta _{j}-\frac{\alpha ^{2}\beta }{|\beta |^{2}}(\bar{\beta}\zeta ^{%
\bar{r}}\frac{\partial ^{\ast }b_{\bar{r}}}{\partial z^{i}}-\beta \frac{%
\partial ^{\ast }b^{\bar{r}}}{\partial z^{i}}\zeta _{\bar{r}})b_{j}-2\alpha
^{2}\beta \frac{\partial ^{\ast }b^{\bar{r}}}{\partial z^{i}}a_{j\bar{r}} \\
&=&(\alpha ^{2}||b||^{2}+|\beta |^{2})(\tilde{N}_{ji}-\overset{a}{N_{ji}}).
\end{eqnarray*}%
Contracting the above relations with $b^{j}$ and $\zeta ^{j},$ it results%
\begin{equation}
(\tilde{N}_{ji}-\overset{a}{N_{ji}})b^{j}=0;  \label{4.33;}
\end{equation}%
\begin{eqnarray}
2|\beta |^{2}(\tilde{N}_{ji}-\overset{a}{N_{ji}})\zeta ^{j}+2\alpha
^{2}\beta \frac{\partial ^{\ast }b^{\bar{r}}}{\partial z^{i}}(||b||^{2}\zeta
_{\bar{r}}-\bar{\beta}b_{\bar{r}}) &=&0;  \notag \\
(\alpha ^{2}||b||^{2}+|\beta |^{2})\zeta ^{\bar{r}}\frac{\partial ^{\ast }b_{%
\bar{r}}}{\partial z^{i}}-\frac{\beta ^{2}}{|\beta |^{2}}(\alpha
^{2}||b||^{2}-|\beta |^{2})\frac{\partial ^{\ast }b^{\bar{r}}}{\partial z^{i}%
}\zeta _{\bar{r}}+2\alpha ^{2}\beta \frac{\partial ^{\ast }b^{\bar{r}}}{%
\partial z^{i}}b_{\bar{r}} &=&0;  \notag \\
(\alpha ^{2}||b||^{2}+|\beta |^{2})(\tilde{N}_{ji}-\overset{a}{N_{ji}})\zeta
^{j}+2\alpha ^{2}(\bar{\beta}\zeta ^{\bar{r}}\frac{\partial ^{\ast }b_{\bar{r%
}}}{\partial z^{i}}+\beta \frac{\partial ^{\ast }b^{\bar{r}}}{\partial z^{i}}%
\zeta _{\bar{r}}) &=&0.  \notag
\end{eqnarray}

Adding the second and the third relations from (\ref{4.33;}), we obtain

$2|\beta |^{2}(\tilde{N}_{ji}-\overset{a}{N_{ji}})\zeta ^{j}+(\alpha
^{2}||b||^{2}+|\beta |^{2})(\bar{\beta}\zeta ^{\bar{r}}\frac{\partial ^{\ast
}b_{\bar{r}}}{\partial z^{i}}+\beta \frac{\partial ^{\ast }b^{\bar{r}}}{%
\partial z^{i}}\zeta _{\bar{r}})=0.$

This, together with the fourth equation from (\ref{4.33;}), implies $(\tilde{%
N}_{ji}-\overset{a}{N_{ji}})\zeta ^{j}=0$ and $\bar{\beta}\zeta ^{\bar{r}}%
\frac{\partial ^{\ast }b_{\bar{r}}}{\partial z^{i}}+\beta \frac{\partial
^{\ast }b^{\bar{r}}}{\partial z^{i}}\zeta _{\bar{r}}=0.$ The last condition
can be rewritten as $\overset{a}{\delta _{k}^{\ast }}|\beta |=0.$

Conversely, if $\overset{a}{\delta _{k}^{\ast }}|\beta |=0,$ by derivation
with respect to $\zeta _{l}$ and then with $\zeta _{\bar{m}}$, we deduce $b^{%
\bar{m}}a^{\bar{r}l}\frac{\partial ^{\ast }b_{\bar{r}}}{\partial z^{i}}+b^{l}%
\frac{\partial ^{\ast }b^{\bar{m}}}{\partial z^{i}}=0.$ The last relation
gives
\begin{equation*}
\frac{\partial ^{\ast }b^{\bar{m}}}{\partial z^{i}}=-\frac{\bar{\beta}}{%
|\beta |^{2}}b^{\bar{m}}\zeta ^{\bar{r}}\frac{\partial ^{\ast }b_{\bar{r}}}{%
\partial z^{i}},
\end{equation*}%
which substituted into (\ref{II.8}) implies $\tilde{N}_{ji}=\overset{a}{%
N_{ji}}$ and so, $\tilde{N}_{ji}$ are holomorphic in $\zeta _{l},$ i.e. the
space is Berwald.
\end{proof}

\begin{corollary}
Let $(M,\mathcal{\tilde{C}})$ be a connected non-purely Hermitian complex
Cartan-Randers space. Then, $(M,\mathcal{\tilde{C}})$ is a strongly
Berwald-Cartan space if and only if $\overset{a}{\delta _{k}^{\ast }}|\beta
|=0$ and $\alpha \,$\ is K\"{a}hler.
\end{corollary}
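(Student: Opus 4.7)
The plan is to combine Theorem 4.2 with Corollary 3.1. By definition, strongly Berwald-Cartan means Berwald-Cartan together with the weakly Kähler-Cartan property, so I will handle the Berwald-Cartan half via Theorem 4.2 and use Corollary 3.1 to upgrade the weakly Kähler-Cartan assumption into the K\"{a}hlerness of $\alpha$.

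First, I would observe that for a non-purely Hermitian complex Cartan-Randers space, Theorem 4.2 already tells us: $(M,\mathcal{\tilde{C}})$ is Berwald-Cartan iff $\overset{a}{\delta_k^\ast}|\beta|=0$, and in that case the coefficients simplify to $\tilde{N}_{ji}=\overset{a}{N_{ji}}$. So in the forward direction, once I assume strongly Berwald-Cartan, the condition $\overset{a}{\delta_k^\ast}|\beta|=0$ is immediate and I get $\tilde{N}_{ji}=\overset{a}{N_{ji}}$. By Corollary 3.1, any strongly Berwald-Cartan space is K\"{a}hler-Cartan, hence $\tilde{N}_{ji}=\tilde{N}_{ij}$. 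Substituting, $\overset{a}{N_{ji}}=\overset{a}{N_{ij}}$, which, since $\alpha$ is a purely Hermitian metric, is precisely the K\"{a}hler condition $\frac{\partial^\ast a_{j\bar{m}}}{\partial z^i}=\frac{\partial^\ast a_{i\bar{m}}}{\partial z^j}$ recalled in Section 3.

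For the converse, I would assume $\overset{a}{\delta_k^\ast}|\beta|=0$ and that $\alpha$ is K\"{a}hler. Theorem 4.2 then gives Berwald-Cartan with $\tilde{N}_{ji}=\overset{a}{N_{ji}}$. K\"{a}hlerness of $\alpha$ gives $\overset{a}{N_{ji}}=\overset{a}{N_{ij}}$, hence $\tilde{N}_{ji}=\tilde{N}_{ij}$, so $\mathcal{\tilde{C}}$ is K\"{a}hler-Cartan and in particular weakly K\"{a}hler-Cartan. Combined with Berwald-Cartan, this is exactly strongly Berwald-Cartan.

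There is no serious obstacle here; the only subtle point is the identification between K\"{a}hlerness of the Hermitian metric $\alpha$ and symmetry of its Chern-Cartan nonlinear connection coefficients $\overset{a}{N_{ji}}$ in the lower indices, which follows from the formula $\overset{a}{N_{ji}}=-a_{j\bar{r}}\frac{\partial^\ast a^{\bar{r}l}}{\partial z^i}\zeta_l$ and the remark in Section 3 that, in the purely Hermitian setting, all flavours of K\"{a}hler-Cartan coincide with $\frac{\partial^\ast a_{j\bar{m}}}{\partial z^i}=\frac{\partial^\ast a_{i\bar{m}}}{\partial z^j}$. With that identification in hand the corollary is essentially a direct book-keeping consequence of the preceding two results.
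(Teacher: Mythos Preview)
Your proof is correct and follows the same route as the paper, which simply writes ``It follows by Theorem 4.2.'' You have merely spelled out what the paper leaves implicit: the identification $\tilde{N}_{ji}=\overset{a}{N_{ji}}$ from Theorem 4.2 together with Corollary 3.1 (to pass from weakly K\"{a}hler-Cartan to K\"{a}hler-Cartan in the Berwald setting) and the equivalence, in the purely Hermitian case, between K\"{a}hlerness of $\alpha$ and the symmetry $\overset{a}{N_{ji}}=\overset{a}{N_{ij}}$.
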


\begin{proof} It follows by Theorem 4.2. \end{proof}

\medskip In the remaining part of this section we come with some examples of
Cartan-Randers metrics which are Berwald-Cartan or strongly Berwald-Cartan.

\medskip \textbf{Example 4.1. }We consider $\Delta =\left\{
(z^{1},z^{2}):=(z,w)\in \mathbf{C}^{2},\;|w|<|z|<1\right\} $ and $(\zeta
_{1},\zeta _{2}):=(\zeta ,\upsilon )\in \widetilde{T^{\prime \ast }\Delta }.$
We choose $\beta =b^{i}(z,w)\zeta _{i},$ $i=1,2,$ with%
\begin{equation}
b^{1}=\frac{w}{|z|^{2}-|w|^{2}};\;b^{2}=-\frac{z}{|z|^{2}-|w|^{2}}.
\label{III.11}
\end{equation}

The function $\alpha :=\sqrt{a^{\bar{j}i}\zeta _{i}\zeta _{\bar{j}}}$, $%
i,j=1,2,$ where%
\begin{equation}
a^{\bar{1}1}=\frac{1}{\left( 1-|z|^{2}\right) ^{2}}+b^{1}b^{\bar{1}};\;a^{%
\bar{1}2}=b^{\bar{1}}b^{2};\;a^{\bar{2}2}=b^{\bar{2}}b^{2};  \label{iii.11'}
\end{equation}%
defines a purely Hermitian complex Cartan metric on $\Delta .$ Using these
tools, we obtain the complex Cartan-Randers metric%
\begin{equation}
\mathcal{\tilde{C}}=\alpha +|\beta |=\sqrt{\frac{|\zeta |^{2}}{\left(
1-|z|^{2}\right) ^{2}}+|\beta |^{2}}+|\beta |,  \label{iii.12}
\end{equation}%
with $||b||^{2}=1,$ $b_{1}=0$ and $b_{2}=-\frac{|z|^{2}-|w|^{2}}{z}.$ Some
computations lead us to the conclusion that the metric (\ref{iii.12}) is
Berwald-Cartan, that is,

$\overset{a}{\delta _{1}^{\ast }}|\beta |=\bar{\beta}\zeta ^{\bar{r}}\frac{%
\partial ^{\ast }b_{\bar{r}}}{\partial z}+\beta \frac{\partial ^{\ast }b^{%
\bar{r}}}{\partial z}\zeta _{\bar{r}}=\bar{\beta}\zeta ^{\bar{2}}\frac{%
\partial ^{\ast }b_{\bar{2}}}{\partial z}+\beta (\frac{\partial ^{\ast }b^{%
\bar{1}}}{\partial z}\bar{\zeta}+\frac{\partial ^{\ast }b^{\bar{2}}}{%
\partial z}\bar{\upsilon})=0$ and

$\overset{a}{\delta _{2}^{\ast }}|\beta |=\bar{\beta}\zeta ^{\bar{r}}\frac{%
\partial ^{\ast }b_{\bar{r}}}{\partial w}+\beta \frac{\partial ^{\ast }b^{%
\bar{r}}}{\partial w}\zeta _{\bar{r}}=\bar{\beta}\zeta ^{\bar{2}}\frac{%
\partial ^{\ast }b_{\bar{2}}}{\partial w}+\beta (\frac{\partial ^{\ast }b^{%
\bar{1}}}{\partial w}\bar{\zeta}+\frac{\partial ^{\ast }b^{\bar{2}}}{%
\partial w}\bar{\upsilon})=0.$

Note that the metric (\ref{iii.12}) is not one strongly Berwald-Cartan
because $\alpha $ from (\ref{iii.11'}) is not a K\"{a}hler metric.

\medskip \textbf{Example 4.2.} On $M=\mathbf{C}^{2}$ we set the purely
Hermitian metric%
\begin{equation}
\alpha ^{2}=e^{z^{1}+\bar{z}^{1}}\left\vert \zeta _{1}\right\vert
^{2}+e^{z^{2}+\bar{z}^{2}}\left\vert \zeta _{2}\right\vert ^{2}  \label{iii}
\end{equation}%
and we choose $\beta =e^{z^{2}}\zeta _{2}.$ Then, $\left\vert \beta
\right\vert ^{2}=e^{z^{2}+\bar{z}^{2}}\left\vert \zeta _{2}\right\vert ^{2}$
and so, $b_{1}=b^{1}=0,$ $b_{2}=e^{-z^{2}},$ $b^{2}=e^{z^{2}}$ and $||b||=1.$

With these tools we construct a complex Cartan-Randers metric%
\begin{equation}
\mathcal{\tilde{C}}=\sqrt{e^{z^{1}+\bar{z}^{1}}\left\vert \zeta
_{1}\right\vert ^{2}+e^{z^{2}+\bar{z}^{2}}\left\vert \zeta _{2}\right\vert
^{2}}+\sqrt{e^{z^{2}+\bar{z}^{2}}\left\vert \zeta _{2}\right\vert ^{2}},
\label{4.9}
\end{equation}%
which is non purely Hermitian, and $\det (\tilde{h}^{\bar{j}i})=\frac{%
\mathcal{\tilde{C}}^{4}}{2\alpha ^{3}|\beta |}\det (a^{\bar{j}i})>0,$ $%
i,j=1,2.$

The metric (\ref{4.9}) is Berwald-Cartan. Indeed,%
\begin{equation*}
\overset{a}{\delta _{i}^{\ast }}|\beta |=\bar{\beta}\zeta ^{\bar{2}}\frac{%
\partial ^{\ast }b_{\bar{2}}}{\partial z^{i}}+\beta \frac{\partial ^{\ast
}b^{\bar{2}}}{\partial z^{i}}\zeta _{\bar{2}}=0,\;i=1,2.
\end{equation*}%
Moreover, due to Theorem 4.2 we have%
\begin{equation*}
\tilde{N}_{11}=\overset{a}{N_{11}}=\zeta _{1}\;;\;\tilde{N}_{12}=\overset{a}{%
N_{12}}=\tilde{N}_{21}=\overset{a}{N_{21}}=0\;;\;\tilde{N}_{22}=\overset{a}{%
N_{22}}=\zeta _{2},
\end{equation*}%
which attest that the metric (\ref{iii}) is K\"{a}hler. Thus, by Corollary
4.1, (\ref{4.9}) is a strongly Berwald-Cartan metric. \medskip Note that the
above example can be generalized to a class of strongly Berwald-Cartan
metrics, taking on $M=\mathbf{C}^{n},$%
\begin{equation*}
\alpha ^{2}=\sum\limits_{k=1}^{n}e^{z^{k}+\bar{z}^{k}}\left\vert \zeta
_{k}\right\vert ^{2}.
\end{equation*}%
For $\beta $ we can choose for instance $\beta =e^{z^{k}}\eta ^{k},$ where $%
k=\overline{1,n}.$

\section{The $\mathcal{L}-$ duality between complex Finsler and complex
Cartan spaces}

\setcounter{equation}{0}Another way to describe the complex Cartan spaces is
given by the correspondence between the various geometrical objects on a
complex Finsler space $(M,F)$ and those of a complex Cartan space $(M,%
\mathcal{C})$, via the complex Legendre transformation (the $\mathcal{L}-$
dual process), \cite{Mub}.

\subsection{A nonholonomic vertical frame}

\setcounter{equation}{0}In our next approach we need another vertical frame
besides $\dot{\partial}^{k}=\frac{\partial }{\partial \zeta _{k}}$ and its
conjugate. Denoting $\zeta ^{k}:=\frac{\partial H}{\partial \zeta _{k}}=h^{%
\bar{j}k}\zeta _{\bar{j}},$ they change as $\zeta ^{\prime k}=\frac{\partial
z^{\prime k}}{\partial z^{l}}\zeta ^{l}.$ Corresponding to the new variables
$\zeta ^{k}$, further on we construct a frame, denoted by $\frac{\partial }{%
\partial \zeta ^{k}}$.

With respect to the vertical natural frame $\{\dot{\partial}^{k},\dot{%
\partial}^{\bar{k}}\}$ on $VT^{\prime \ast }M\oplus \overline{VT^{\prime
\ast }M},$ $\frac{\partial }{\partial \zeta ^{k}}$ can be decomposed as%
\begin{equation}
\frac{\partial }{\partial \zeta ^{k}}:=A_{km}\dot{\partial}^{m}+h_{k\bar{m}}%
\dot{\partial}^{\bar{m}},  \label{1}
\end{equation}%
where the tensor $A_{km}$ can be found. We require the conditions $\frac{%
\partial \zeta ^{j}}{\partial \zeta ^{k}}=\delta _{k}^{j}$ and $\frac{%
\partial \bar{\zeta}^{j}}{\partial \zeta ^{k}}=0$ and thus, it results%
\begin{equation}
A_{kr}h^{jr}=0\;;\;A_{kr}=-h_{r\bar{j}}h_{k\bar{m}}h^{\bar{j}\bar{m}}=A_{rk},
\label{2}
\end{equation}%
where $h^{jr}:=\frac{\partial ^{2}H}{\partial \zeta _{j}\partial \zeta _{r}}%
=(\dot{\partial}^{r}h^{\bar{p}j})\zeta _{\bar{p}}$ , which implies%
\begin{equation}
h_{r\bar{j}}h^{\bar{j}\bar{m}}h^{sr}=0\;;\;(\dot{\partial}^{j}A_{kr})\zeta
^{r}=0\;;\;(\dot{\partial}^{\bar{s}}A_{kr})\zeta ^{r}=h_{k\bar{m}}h^{\bar{m}%
\bar{s}}.  \label{3}
\end{equation}

\begin{lemma}
Let $(M,\mathcal{C})$ be a complex Cartan space. If the tensor $A_{km}$
satisfies the conditions (\ref{2}), then $A_{kr}=\frac{\partial ^{2}H}{%
\partial \zeta ^{k}\partial \zeta ^{r}}$ and $h_{k\bar{r}}=\frac{\partial
^{2}H}{\partial \zeta ^{k}\partial \bar{\zeta}^{r}}.$
\end{lemma}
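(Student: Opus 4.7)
The plan is to reduce both identities to the single intermediate claim that $\partial H/\partial \zeta^k = \zeta_k$, where the left side is understood via the nonholonomic frame (\ref{1}); once that is in hand, the two second-derivative formulas drop out by one more application of (\ref{1}) and of its complex conjugate.

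First I compute the first derivative. Applying (\ref{1}) to $H$,
\begin{equation*}
\frac{\partial H}{\partial \zeta^k} \;=\; A_{km}\dot\partial^m H + h_{k\bar m}\dot\partial^{\bar m}H \;=\; A_{km}\zeta^m + h_{k\bar m}\zeta^{\bar m}.
\end{equation*}
The second term collapses to $\zeta_k$: since $\zeta^{\bar m}=h^{\bar m i}\zeta_i$ and $h_{k\bar m}h^{\bar m i}=\delta_k^i$, we get $h_{k\bar m}\zeta^{\bar m}=\zeta_k$. So the substance of the lemma is to show $A_{km}\zeta^m = 0$. Substituting $A_{km}=-h_{m\bar j}h_{k\bar s}h^{\bar j\bar s}$ from (\ref{2}) and using $h_{m\bar j}\zeta^m=h_{m\bar j}h^{\bar p m}\zeta_{\bar p}=\zeta_{\bar j}$, one is reduced to proving
\begin{equation*}
h^{\bar j\bar s}\zeta_{\bar j} \;=\; 0.
\end{equation*}
This is the heart of the argument, and it follows from the $(1,1)$-homogeneity of $H$: differentiating Euler's identity $\zeta_{\bar k}\dot\partial^{\bar k}H = H$ with respect to $\zeta_{\bar m}$ gives $\zeta_{\bar k}h^{\bar k\bar m}=0$, exactly what is needed. (Alternatively, one can read this off the relation $(\dot\partial^{\bar k}h^{\bar s i})\zeta_{\bar k}=0$, which is the homogeneity of the metric tensor $h^{\bar s i}$.) I expect this homogeneity step to be the only non-trivial one in the whole proof; everything else is bookkeeping of bar-indices.

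With $\partial H/\partial\zeta^k=\zeta_k$ established, applying (\ref{1}) a second time yields
\begin{equation*}
\frac{\partial^2 H}{\partial \zeta^r\,\partial \zeta^k} \;=\; A_{rm}\dot\partial^m \zeta_k + h_{r\bar m}\dot\partial^{\bar m}\zeta_k \;=\; A_{rm}\delta^m_k + 0 \;=\; A_{rk} \;=\; A_{kr},
\end{equation*}
where the last equality uses the symmetry $A_{rk}=A_{kr}$ recorded in (\ref{2}). For the mixed derivative I take the complex conjugate of (\ref{1}), namely $\partial/\partial\bar\zeta^r = A_{\bar r\bar m}\dot\partial^{\bar m} + h_{m\bar r}\dot\partial^m$, and apply it to $\zeta_k$ to obtain
\begin{equation*}
\frac{\partial^2 H}{\partial \bar\zeta^r\,\partial \zeta^k} \;=\; A_{\bar r\bar m}\dot\partial^{\bar m}\zeta_k + h_{m\bar r}\dot\partial^m\zeta_k \;=\; 0 + h_{k\bar r}\delta^m_k \;=\; h_{k\bar r},
\end{equation*}
which completes both identities. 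The only place where care is needed is choosing the correct conjugate version of (\ref{1}) and keeping track of which indices are barred; modulo that, the proof is purely algebraic.
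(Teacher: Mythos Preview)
Your proof is correct. Both you and the paper compute by applying the nonholonomic frame (\ref{1}) to $H$ and then differentiating again, but you take a cleaner path: you observe at the outset that $A_{km}\zeta^{m}=0$, because $h^{\bar j\bar s}\zeta_{\bar j}=0$ follows from differentiating the Euler relation $\zeta_{\bar k}\dot\partial^{\bar k}H=H$. This collapses the first derivative to $\partial H/\partial\zeta^{k}=\zeta_{k}$, after which the second derivatives are one-line computations. The paper, by contrast, does not simplify $A_{kl}\zeta^{l}$ to zero; it carries $\partial H/\partial\zeta^{k}=A_{kl}\zeta^{l}+\zeta_{k}$ into the second differentiation and then invokes the auxiliary identities in (\ref{3}) (namely $A_{kl}h^{lj}=0$, $(\dot\partial^{j}A_{kl})\zeta^{l}=0$, $(\dot\partial^{\bar s}A_{kl})\zeta^{l}=h_{k\bar m}h^{\bar m\bar s}$) to reduce the resulting expression. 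Your route avoids (\ref{3}) entirely and is shorter; the paper's route has the minor advantage of not needing the homogeneity vanishing $h^{\bar j\bar s}\zeta_{\bar j}=0$ as a separate ingredient, since that information is effectively packaged inside (\ref{3}). One cosmetic slip: in your final display the middle term should read $h_{m\bar r}\delta^{m}_{k}$ rather than $h_{k\bar r}\delta^{m}_{k}$, though the conclusion $h_{k\bar r}$ is of course right.
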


\begin{proof}  Due to (\ref{1}) and (\ref{3}), we have

$\frac{\partial H}{\partial \zeta ^{k}}=A_{kr}(\dot{\partial}^{r}H)+h_{k\bar{%
m}}(\dot{\partial}^{\bar{m}}H)=A_{kr}\zeta ^{r}+\zeta _{k}$ and

$\frac{\partial ^{2}H}{\partial \zeta ^{k}\partial \zeta ^{r}}=\frac{%
\partial }{\partial \zeta ^{r}}\left( \frac{\partial H}{\partial \zeta ^{k}}%
\right) =(A_{rj}\dot{\partial}^{j}+h_{r\bar{s}}\dot{\partial}^{\bar{s}%
})\left( A_{kl}\zeta ^{l}+\zeta _{k}\right) $

$=A_{rj}(\dot{\partial}^{j}A_{kl})\zeta ^{l}+A_{rj}A_{kl}h^{lj}+A_{rk}+h_{r%
\bar{s}}(\dot{\partial}^{\bar{s}}A_{kl})\zeta ^{l}+h_{r\bar{s}}A_{kl}h^{\bar{%
s}l}$

$=2A_{rk}+A_{rj}(\dot{\partial}^{j}A_{kl})\zeta ^{l}+h_{r\bar{s}}(\dot{%
\partial}^{\bar{s}}A_{kl})\zeta ^{l}=A_{rk}.$

Also, it results $\frac{\partial ^{2}H}{\partial \zeta ^{k}\partial \bar{%
\zeta}^{r}}=\frac{\partial }{\partial \bar{\zeta}^{r}}\left( \frac{\partial H%
}{\partial \zeta ^{k}}\right) =(A_{\bar{r}\bar{j}}\dot{\partial}^{\bar{j}%
}+h_{s\bar{r}}\dot{\partial}^{s})(A_{kl}\zeta ^{l}+\zeta _{k})$

$=A_{\bar{r}\bar{j}}(\dot{\partial}^{\bar{j}}A_{kl})\zeta ^{l}+A_{\bar{r}%
\bar{j}}A_{kl}h^{\bar{j}l}+h_{s\bar{r}}(\dot{\partial}^{s}A_{kl})\zeta
^{l}+h_{r\bar{s}}A_{kl}h^{sk}+h_{k\bar{r}}$

$=A_{\bar{r}\bar{j}}h_{k\bar{m}}h^{\bar{m}\bar{j}}-A_{\bar{r}\bar{j}}h_{l%
\bar{s}}h_{k\bar{m}}h^{\bar{s}\bar{m}}h^{\bar{j}l}+h_{k\bar{r}}=h_{k\bar{r}},
$ which completes the proof.
\end{proof}

Now, we consider the Hessian matrix $\mathcal{H}_{1}=\left(
\begin{array}{cc}
h^{ks} & h^{\bar{p}k} \\
h^{\bar{r}s} & h^{\bar{p}\bar{r}}%
\end{array}%
\right) $ on $T_{C}(T^{\prime \ast }M)$, of the complex Cartan metric $%
H=H(z^{k},\zeta _{k})$. But, the complex Cartan metric $H$ can be seen as a
function of $(z^{k},\zeta ^{k})$ and so, its Hessian matrix on $%
T_{C}(T^{\prime \ast }M)$ is $\mathcal{H}_{2}=\left(
\begin{array}{cc}
h_{kj} & h_{j\bar{r}} \\
h_{k\bar{m}} & h_{\bar{m}\bar{r}}%
\end{array}%
\right) $, where $h_{kj}:=\frac{\partial ^{2}H}{\partial \zeta ^{k}\partial
\zeta ^{j}}$.

\begin{theorem}
Let $(M,\mathcal{C})$ be a complex Cartan space. Then, $\{\frac{\partial }{%
\partial \zeta ^{k}},\frac{\partial }{\partial \bar{\zeta}^{k}}\}$ is a
vertical frame on $VT^{\prime \ast }M\oplus \overline{VT^{\prime \ast }M},$
with%
\begin{equation}
\frac{\partial }{\partial \zeta ^{k}}:=h_{km}\dot{\partial}^{m}+h_{k\bar{m}}%
\dot{\partial}^{\bar{m}},  \label{2'}
\end{equation}%
if and only if $\mathcal{H}_{1}\mathcal{H}_{2}=\mathcal{H}_{2}\mathcal{H}%
_{1}=I_{2n}.$
\end{theorem}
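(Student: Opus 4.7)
The plan is to view $(\zeta _{a},\bar{\zeta}_{a})\mapsto (\zeta ^{a},\bar{\zeta}^{a})$ as a smooth fibrewise map on $T^{\prime \ast }M$ whose $2n\times 2n$ Jacobian is precisely $\mathcal{H}_{1}$, since $\zeta ^{k}=\partial H/\partial \zeta _{k}$ by definition (and similarly $\bar{\zeta}^{k}=\partial H/\partial \bar{\zeta}_{k}$ because $H$ is real). The theorem then asserts that $\{\partial /\partial \zeta ^{k},\partial /\partial \bar{\zeta}^{k}\}$ is a legitimate vertical frame admitting the expansion (\ref{2'}) exactly when this Jacobian is invertible with inverse $\mathcal{H}_{2}$, which is the content of $\mathcal{H}_{1}\mathcal{H}_{2}=\mathcal{H}_{2}\mathcal{H}_{1}=I_{2n}$.

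For the ``only if'' direction I would assume the expansion (\ref{2'}), together with its conjugate for $\partial /\partial \bar{\zeta}^{k}$, and impose the four chain-rule identities
\begin{equation*}
\frac{\partial \zeta ^{j}}{\partial \zeta ^{k}}=\delta _{k}^{j},\quad \frac{\partial \bar{\zeta}^{j}}{\partial \zeta ^{k}}=0,\quad \frac{\partial \zeta ^{j}}{\partial \bar{\zeta}^{k}}=0,\quad \frac{\partial \bar{\zeta}^{j}}{\partial \bar{\zeta}^{k}}=\delta _{k}^{j},
\end{equation*}
required for the new frame to be consistent with a coordinate change. Substituting (\ref{2'}) and using the elementary identities $\dot{\partial}^{m}\zeta ^{j}=h^{mj}$, $\dot{\partial}^{\bar{m}}\zeta ^{j}=h^{\bar{m}j}$ together with their conjugates expands these four conditions into exactly the four block relations packaged in $\mathcal{H}_{2}\mathcal{H}_{1}=I_{2n}$. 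The companion identity $\mathcal{H}_{1}\mathcal{H}_{2}=I_{2n}$ then follows either from the invertibility of square matrices or, more directly, by applying the dual chain rule to $\zeta _{k},\bar{\zeta}_{k}$ expressed in the new frame.

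Conversely, when $\mathcal{H}_{1}\mathcal{H}_{2}=I_{2n}$ the matrix $\mathcal{H}_{1}$ is nonsingular, so $(z,\zeta ^{k},\bar{\zeta}^{k})$ furnishes a valid local coordinate system on $T^{\prime \ast }M$, and the standard chain rule gives
\begin{equation*}
\frac{\partial }{\partial \zeta ^{k}}=\frac{\partial \zeta _{a}}{\partial \zeta ^{k}}\dot{\partial}^{a}+\frac{\partial \bar{\zeta}_{a}}{\partial \zeta ^{k}}\dot{\partial}^{\bar{a}}.
\end{equation*}
The entries $\partial \zeta _{a}/\partial \zeta ^{k}$ and $\partial \bar{\zeta}_{a}/\partial \zeta ^{k}$ are the corresponding entries in the $k$-th row of the inverse Jacobian $\mathcal{H}_{2}$, which by Lemma 5.1 are $h_{ka}$ and $h_{k\bar{a}}$ respectively, yielding (\ref{2'}).

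The main technical point I would watch is the careful bookkeeping of row/column positions: each of the four chain-rule identities in the forward direction must be matched to its corresponding block in the matrix product, and the systematic distinction between $h^{ij}$ (the pure holomorphic Hessian) and $h^{\bar{j}i}$ (the mixed Hermitian block) must be respected throughout, as this is the most plausible location for an index or sign slip.
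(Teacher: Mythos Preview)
Your proposal is correct and follows essentially the same approach as the paper: both arguments hinge on the chain-rule identities $\partial \zeta ^{j}/\partial \zeta ^{k}=\delta _{k}^{j}$, $\partial \bar{\zeta}^{j}/\partial \zeta ^{k}=0$ (and conjugates) being equivalent to the block relations in $\mathcal{H}_{2}\mathcal{H}_{1}=I_{2n}$, together with Lemma~5.1 identifying the coefficients as $h_{km}$. The paper's own proof is a single sentence declaring this equivalence with condition~(\ref{2}) to be immediate, whereas you have spelled out both directions explicitly; your expanded version is a faithful elaboration of what the paper leaves implicit.
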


\begin{proof}  This is immediate, taking into account that the condition $\mathcal{%
H}_{1}\mathcal{H}_{2}=\mathcal{H}_{2}\mathcal{H}_{1}=I_{2n}$ is equivalent
with (\ref{2}).
\end{proof}
Moreover, after some computation, we obtain the expression of $\dot{\partial}%
^{k}$, with respect to the frame $\{\frac{\partial }{\partial \zeta ^{k}},%
\frac{\partial }{\partial \bar{\zeta}^{k}}\},$%
\begin{equation}
\dot{\partial}^{k}=h^{kl}\frac{\partial }{\partial \zeta ^{l}}+h^{\bar{m}k}%
\frac{\partial }{\partial \bar{\zeta}^{m}},  \label{9}
\end{equation}%
under assumption $\mathcal{H}_{1}\mathcal{H}_{2}=\mathcal{H}_{2}\mathcal{H}%
_{1}=I_{2n}.$

In view of (\ref{2'}) this vertical frame is said to be \textit{nonholonomic}%
, because it depends on the tensors $h_{km}$ and $h_{k\bar{m}}.$

\begin{lemma}
Under assumptions (\ref{2}), we have that $\frac{\partial H}{\partial \zeta
^{k}}\zeta ^{k}=\frac{\partial H}{\partial \zeta ^{\bar{k}}}\zeta ^{\bar{k}%
}=H,$ $h_{rk}\zeta ^{r}=h_{rk}\zeta ^{k}=0$ and $h_{ij}$ is $(1,-1)$ -
homogeneous with respect to the variables $\zeta =(\zeta _{k}).\ $
\end{lemma}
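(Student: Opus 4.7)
I would deduce all three claims from three ingredients already at hand: the $(1,1)$-homogeneity of $H$, the explicit formula $A_{kr}=-h_{r\bar{\jmath}}h_{k\bar{m}}h^{\bar{\jmath}\bar{m}}$ recorded in (\ref{2}), and the identification $A_{kr}=h_{kr}$ established in Lemma 5.1.

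The first thing to settle is the middle claim $h_{rk}\zeta^{r}=0$, from which everything else will follow easily. Differentiating the Euler identity $(\partial H/\partial\zeta_{p})\zeta_{p}=H$ once more in $\zeta_{m}$ yields $h^{mp}\zeta_{p}=0$, and complex conjugation then gives $h^{\bar{p}\bar{m}}\bar{\zeta}_{p}=0$. Combining this with the explicit form of $A_{kr}=h_{kr}$ and with the standard duality $h^{\bar{p}r}h_{r\bar{m}}=\delta^{\bar{p}}_{\bar{m}}$, a single contraction
\[
h_{rk}\zeta^{r}=h_{rk}h^{\bar{p}r}\bar{\zeta}_{p}=-h_{k\bar{\jmath}}h^{\bar{\jmath}\bar{m}}(h_{r\bar{m}}h^{\bar{p}r})\bar{\zeta}_{p}=-h_{k\bar{\jmath}}h^{\bar{\jmath}\bar{p}}\bar{\zeta}_{p}=0
\]
does the job. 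The other contraction $h_{rk}\zeta^{k}=0$ is then automatic from the symmetry $h_{rk}=h_{kr}$, itself a direct consequence of the symmetry of $h^{\bar{\jmath}\bar{m}}$ in its two conjugate indices.

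For the first pair of identities I would recycle the calculation already carried out inside the proof of Lemma 5.1, where the formula $\partial H/\partial\zeta^{k}=h_{kr}\zeta^{r}+\zeta_{k}$ was obtained. Contracting with $\zeta^{k}$ and invoking the previous step kills the first term, leaving $\zeta_{k}\zeta^{k}=h^{\bar{\jmath}k}\bar{\zeta}_{j}\zeta_{k}=H$. The conjugate identity $(\partial H/\partial\bar{\zeta}^{k})\bar{\zeta}^{k}=H$ then follows by complex conjugation, since $H$ is real.

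Finally, for the $(1,-1)$-homogeneity of $h_{ij}$ I would apply Euler's rule factor by factor to $h_{ij}=-h_{j\bar{m}}h_{i\bar{p}}h^{\bar{p}\bar{m}}$. The inverse tensor $h_{i\bar{\jmath}}$ is $(0,0)$-homogeneous because so is $h^{\bar{\jmath}i}$, hence $h_{j\bar{m}}$ and $h_{i\bar{p}}$ are annihilated by both $\zeta_{k}\dot{\partial}^{k}$ and $\bar{\zeta}_{k}\dot{\partial}^{\bar{k}}$; and a short calculation based on $h^{pm}=(\dot{\partial}^{m}h^{\bar{r}p})\bar{\zeta}_{r}$ gives $\zeta_{k}\dot{\partial}^{k}h^{pm}=-h^{pm}$ and $\bar{\zeta}_{k}\dot{\partial}^{\bar{k}}h^{pm}=h^{pm}$, so $h^{\bar{p}\bar{m}}=\overline{h^{pm}}$ is $(1,-1)$-homogeneous and the product rule transports this degree to $h_{ij}$. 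I do not see any genuine obstacle here; the only point requiring attention is the bookkeeping of barred versus unbarred indices and of the two independent degrees under the rescaling $\zeta\mapsto\lambda\zeta$.
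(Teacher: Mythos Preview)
Your argument is correct and uses the same ingredients as the paper's proof, only with the logical order of the first two claims reversed. The paper establishes $\frac{\partial H}{\partial\zeta^{k}}\zeta^{k}=H$ first, by expanding via the frame (\ref{2'}) and asserting that the term $h_{km}(\dot{\partial}^{m}H)\,h^{\bar{p}k}\zeta_{\bar{p}}$ vanishes ``due to (\ref{2})''; it then deduces $h_{rk}\zeta^{r}=0$ from the Hessian identity $h_{rk}=\partial^{2}H/\partial\zeta^{k}\partial\zeta^{r}$ and the Euler relation just proved. You instead isolate the vanishing $h_{rk}\zeta^{r}=0$ up front by the explicit contraction using $A_{kr}=-h_{r\bar{\jmath}}h_{k\bar{m}}h^{\bar{\jmath}\bar{m}}$ together with $h^{\bar{p}\bar{m}}\bar{\zeta}_{p}=0$, and then read off the Euler identity from the formula $\partial H/\partial\zeta^{k}=h_{kr}\zeta^{r}+\zeta_{k}$ already obtained inside Lemma~5.1. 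Both routes rest on exactly the same computation; yours has the advantage of making transparent the step the paper leaves implicit. For the $(1,-1)$-homogeneity of $h_{ij}$ you spell out in detail what the paper only states (``using again (\ref{2})''), namely the factor-by-factor Euler count on $h_{ij}=-h_{j\bar{m}}h_{i\bar{p}}h^{\bar{p}\bar{m}}$.
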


\begin{proof} By (\ref{2'}) we have, $\frac{\partial H}{\partial \zeta ^{k}}\zeta ^{k}=h_{km}%
\frac{\partial H}{\partial \zeta _{m}}h^{\bar{p}k}\zeta _{\bar{p}}+h_{k\bar{m%
}}\frac{\partial H}{\partial \zeta _{\bar{m}}}h^{\bar{p}k}\zeta _{\bar{p}}.$
Due to (\ref{2}), the first term is vanishing and the second is $\frac{%
\partial H}{\partial \zeta _{\bar{m}}}\zeta _{\bar{m}}=H.$ So, $\frac{%
\partial H}{\partial \zeta ^{k}}\zeta ^{k}=H,$ and by conjugation, $\frac{%
\partial H}{\partial \zeta ^{\bar{k}}}\zeta ^{\bar{k}}=H.$

Now, $h_{rk}\zeta ^{r}=\frac{\partial ^{2}H}{\partial \zeta ^{k}\partial
\zeta ^{r}}\zeta ^{r}=\frac{\partial }{\partial \zeta ^{k}}(\frac{\partial H%
}{\partial \zeta ^{r}}\zeta ^{r})-\frac{\partial H}{\partial \zeta ^{k}}=0.$

Using again (\ref{2}), it results $(\dot{\partial}^{k}h_{ij})\zeta
_{k}=h_{ij}$ $\ $and $(\dot{\partial}^{\bar{k}}h_{ij})\zeta _{\bar{k}%
}=-h_{ij},$ which complete our claim.
\end{proof}

\subsection{$\mathcal{L}-$ dual process}

Let $(M,F)$ be a complex Finsler, where $F:T^{\prime }M\rightarrow \mathbb{R}%
^{+}$ is a continuous function satisfying the conditions:

\textit{i)} $L:=F^2$ is smooth on $\widetilde{T^{\prime }M}:=T^{\prime
}M\backslash \{0\};$

\textit{ii)} $F(z,\eta )\geq 0$, the equality holds if and only if $\eta =0;$

\textit{iii)} $F(z,\lambda \eta )=|\lambda |F(z,\eta )$ for $\forall \lambda
\in \mathbb{C}$;

\textit{iv)} the Hermitian matrix $\left( g_{i\bar{j}}(z,\eta )\right) $ is
positive definite, where $g_{i\bar{j}}:=\frac{\partial ^2L}{\partial \eta
^i\partial \bar{\eta}^j}$ is the fundamental metric tensor.

We consider the adapted frame $\{\delta _{k}:=\frac{\delta }{\delta z^{k}}=%
\frac{\partial }{\partial z^{k}}-N_{k}^{i}\frac{\partial }{\partial \eta ^{i}%
},\dot{\partial}_{k}:=\frac{\partial }{\partial \eta ^{k}}\}$ of
Chern-Finsler $(c.n.c.),$ where $N_{k}^{i}(z,\eta )=g^{\overline{m}i}\frac{%
\partial g_{l\overline{m}}}{\partial z^{k}}\eta ^{l},$ and the Chern-Finsler
connection (in brief $C-F$ connection), whose local coefficients are (see
\cite{Mub})
\begin{equation}
L_{jk}^{i}=g^{\overline{l}i}\delta _{k}g_{j\overline{l}}=\dot{\partial}%
_{j}N_{k}^{i}\;\;;\;C_{jk}^{i}=g^{\overline{l}i}\dot{\partial}_{k}g_{j%
\overline{l}},  \label{1.1}
\end{equation}%
and $L_{\bar{j}k}^{\bar{\imath}}=C_{\bar{j}k}^{\bar{\imath}}=0$. We recall
that in \cite{A-P}'s terminology, the complex Finsler space $(M,F)$ is
\textit{K\"{a}hler}$\;$iff $T_{jk}^{i}\eta ^{j}=0$ and \textit{weakly K\"{a}%
hler }iff\textit{\ } $g_{i\overline{l}}T_{jk}^{i}\eta ^{j}\overline{\eta }%
^{l}=0,$ where $T_{jk}^{i}:=L_{jk}^{i}-L_{kj}^{i}.$

The Chern-Finsler $(c.n.c.)$ does not generally derive from a spray, but it
always determines a complex spray with the local coefficients $G^{i}=\frac{1%
}{2}N_{j}^{i}\eta ^{j}.$

In \cite{Mub} the \textit{complex Legendre transformation} was introduced,
i.e. a local diffeomorphism $\Phi \times \bar{\Phi}$ with $\Phi :U\subset
T^{\prime }M\rightarrow \bar{U}^{\ast }\subset T^{\prime \prime \ast }M,$ $%
\Phi (z^{k},\eta ^{k})=(z^{k},\dot{\partial}_{\bar{k}}L),$ and $\bar{\Phi}:%
\bar{U}\subset T^{\prime \prime }M\rightarrow U^{\ast }\subset T^{\prime
\ast }M,$ $\bar{\Phi}(z^{k},\bar{\eta}^{k})=(z^{k},\dot{\partial}_{k}L).$
For simplicity, hereinafter the complex Legendre transformation is denoted
only by $\Phi $ and the distinction between the open sets $U$ and $\bar{U}$
is not specified, but we have assumed that it is defined, as above, on whole
$T_{C}M.$ The properties obtained by $\Phi $ or by $\Phi ^{-1}$are called $%
\mathcal{L}-$ dual one to another. Also, we can assume that in any point of $%
T_{C}M$ there are local charts which are sent by $\mathcal{L}-$ duality in
local charts on $T_{C}^{\ast }M$. We keep the notations from \cite{Mub},
(p.163), namely  '$^{\ast }$' is the image of various geometric objects by $%
\Phi $, and '$^{\circ }$' is their image by $\Phi ^{-1}.$

Now, setting the locally tangent map $d\Phi :T_{C}(T^{\prime }M)\rightarrow
T_{C}(T^{\prime \prime \ast }M)$ and $d\bar{\Phi}:T_{C}(T^{\prime \prime}
M)\rightarrow T_{C}(T^{\prime \ast }M),$ we determine the conditions under
which $d\Phi $ sends the complex tangent vectors in $T^{\prime }M$ into the
complex tangent vectors in $T^{\prime \ast }M,$ such that the image by
complex Legendre transformation, of a complex Finsler space $(M,F)$ is
locally a complex Cartan space $(M,\mathcal{C})$, and conversely, i.e.,%
\begin{equation}
\left( L(z^{k},\eta ^{k})\right) ^{\ast }=H(z^{k},\zeta _{k})\;;\;\left(
H(z^{k},\zeta _{k})\right) ^{\circ }=L(z^{k},\eta ^{k}),  \label{4.2}
\end{equation}%
with%
\begin{equation}
\frac{\partial L}{\partial z^{i}}=-\frac{\partial ^{\ast }H}{\partial z^{i}}%
\;;\;\left( \eta ^{k}\right) ^{\ast }=\dot{\partial}^{k}H\;;\;(\zeta
_{k})^{\circ }=\dot{\partial}_{k}L.  \label{4.0}
\end{equation}

\medskip Let $\mathcal{G}=\left(
\begin{array}{cc}
g_{kj} & g_{j\bar{r}} \\
g_{k\bar{m}} & g_{\bar{m}\bar{r}}%
\end{array}%
\right) $ be the Hessian matrix on $T_{C}(T^{\prime }M),$ of a complex
Finsler metric $L(z^{k},\eta ^{k})$, where $g_{jk}:=\frac{\partial ^{2}L}{%
\partial \eta ^{j}\partial \eta ^{k}}$ and $g_{j\bar{k}}:=\frac{\partial
^{2}L}{\partial \eta ^{j}\partial \bar{\eta }^{k}}$ is the metric tensor.

Since the Jacobi matrix of $\Phi $ is $\left(
\begin{array}{cc}
\delta _{jk} & \frac{\partial ^{2}L}{\partial z^{j}\partial \bar{\eta}^{r}}
\\
0 & g_{j\bar{r}}%
\end{array}%
\right) ,$ then $d\Phi $ sends $\{\frac{\partial }{\partial z^{k}},\dot{%
\partial}_{k}\}$ in $\{(\frac{\partial }{\partial z^{k}})^{\ast },(\dot{%
\partial}_{k})^{\ast }\}$ by%
\begin{eqnarray}
d\Phi (\frac{\partial }{\partial z^{k}}) &=&(\frac{\partial }{\partial z^{k}}%
)^{\ast }=\frac{\partial ^{\ast }}{\partial z^{k}}+\frac{\partial ^{2}L}{%
\partial z^{k}\partial \eta ^{j}}\dot{\partial}^{j}+\frac{\partial ^{2}L}{%
\partial z^{k}\partial \bar{\eta}^{r}}\dot{\partial}^{\bar{r}};
\label{4.2''} \\
d\Phi (\dot{\partial}_{k}) &=&(\dot{\partial}_{k})^{\ast }=g_{kj}\dot{%
\partial}^{j}+g_{k\bar{r}}\dot{\partial}^{\bar{r}},  \notag
\end{eqnarray}%
with conditions%
\begin{equation}
(\dot{\partial}_{i}\eta ^{j})^{\ast }=(\dot{\partial}_{i})^{\ast }(\eta
^{j})^{\ast }=\delta _{i}^{j}\;;\;(\dot{\partial}_{i}\bar{\eta}^{j})^{\ast
}=(\dot{\partial}_{i})^{\ast }(\bar{\eta}^{j})^{\ast }=0.  \label{b}
\end{equation}%
which by $\left( \eta ^{k}\right) ^{\ast }=\dot{\partial}^{k}H,$ lead to%
\begin{equation}
g_{k\bar{r}}h^{\bar{r}j}+g_{ik}h^{sk}=\delta _{k}^{j}\;;\;g_{ik}h^{\bar{m}%
k}+g_{i\bar{k}}h^{\bar{s}\bar{k}}=0,  \notag
\end{equation}%
equivalently with $\mathcal{GH}_{1}=\mathcal{H}_{1}\mathcal{G}=I_{2n}.$

Similarly, we obtain that $d\Phi ^{-1}$ sends $\{\frac{\partial ^{\ast }}{%
\partial z^{k}},\dot{\partial}^{k}\}$ in $\{(\frac{\partial ^{\ast }}{%
\partial z^{k}})^{\circ },(\dot{\partial}^{k})^{\circ }\}$ if and only if $%
\mathcal{GH}_{1}=\mathcal{H}_{1}\mathcal{G}=I_{2n}.$ So, we have proved.

\begin{theorem}
Let $M$ $\ $be a complex manifold with the metrics $F$ and $\mathcal{C}$
given by (\ref{4.2}). The $\mathcal{L}-$ dual of the complex Finsler space $%
(M,F)$ is locally the complex Cartan space $(M,\mathcal{C})$ if and only if $%
\mathcal{GH}_{1}=\mathcal{H}_{1}\mathcal{G}=I_{2n}.$
\end{theorem}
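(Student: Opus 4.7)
The plan is to derive the equivalence directly from the explicit form of the Jacobi matrix of $\Phi$ together with the defining relations (\ref{4.0}) of the Legendre transformation. The whole point of the theorem is that $d\Phi$ a priori only knows how to map a frame of $T_C(T'M)$ into $T_C(T'^*M)$ as a real tangent map; asking that it sends holomorphic tangent vectors to holomorphic tangent vectors and respects the vertical structure produces algebraic constraints on the Hessians, and these are exactly what is encoded in $\mathcal{G}\mathcal{H}_1 = \mathcal{H}_1\mathcal{G} = I_{2n}$.

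First I would write out $d\Phi$ on the natural frame $\{\partial/\partial z^k, \dot{\partial}_k\}$. Because $\Phi(z^k,\eta^k)=(z^k,\dot{\partial}_k L)$, the Jacobi matrix is block triangular, and a short computation (already sketched in (\ref{4.2''})) shows
\[
d\Phi(\dot{\partial}_k) = g_{kj}\dot{\partial}^{j} + g_{k\bar r}\dot{\partial}^{\bar r},
\]
while $d\Phi(\partial/\partial z^k)$ picks up extra vertical terms coming from $\partial^2 L/\partial z^k\partial\eta^j$ and its bar-conjugate partner. The compatibility we require is that the image frame $(\dot\partial_k)^*$ behaves as a vertical frame on $T'^*M$ with respect to the new coordinate $(\eta^k)^* = \dot\partial^k H$, i.e.\ that (\ref{b}) holds.

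Next I would plug $(\eta^k)^*=\dot\partial^k H = h^{\bar r k}\zeta_{\bar r}$ into (\ref{b}). The identity $(\dot\partial_i)^*(\eta^j)^* = \delta_i^j$ becomes, after applying $g_{kj}\dot\partial^j + g_{k\bar r}\dot\partial^{\bar r}$ to $h^{\bar r k}\zeta_{\bar r}$,
\[
g_{ik}h^{sk} + g_{i\bar r}h^{\bar r j} = \delta_i^j,
\]
and similarly $(\dot\partial_i)^*(\bar\eta^j)^* = 0$ gives
\[
g_{ik}h^{\bar m k} + g_{i\bar k}h^{\bar s\bar k} = 0.
\]
These two identities, taken together with their conjugates, are exactly the entries of the product $\mathcal{G}\mathcal{H}_1$, so they collapse into the single matrix statement $\mathcal{G}\mathcal{H}_1 = I_{2n}$, and hence also $\mathcal{H}_1\mathcal{G}=I_{2n}$ since both Hessians are Hermitian and nondegenerate. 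The converse direction is then the same calculation read backwards: if $\mathcal{G}\mathcal{H}_1 = \mathcal{H}_1\mathcal{G}=I_{2n}$, the formulas (\ref{4.2''}) show that $d\Phi$ maps $\{\partial/\partial z^k, \dot\partial_k\}$ into a frame satisfying (\ref{b}), so it really is the tangent map of a diffeomorphism between the holomorphic structures, and an analogous computation handles $d\Phi^{-1}$.

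The only delicate point, and the one I would treat carefully, is the bookkeeping with the mixed second derivatives of $H$ with respect to $\zeta$ and $\bar\zeta$: here is where the nonholonomic vertical frame from Subsection 5.1 becomes essential, because the correct expression of $\dot\partial^k$ in terms of $\partial/\partial\zeta^k,\partial/\partial\bar\zeta^k$ (formula (\ref{9})) is only valid precisely under the hypothesis $\mathcal{H}_1\mathcal{H}_2=I_{2n}$, and the identity $\mathcal{G}\mathcal{H}_1=I_{2n}$ is the Legendre-transformed counterpart of that condition. The remaining work is routine linear algebra once (\ref{b}) is translated into matrix form.
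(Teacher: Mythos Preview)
Your proposal is correct and follows essentially the same route as the paper: compute $d\Phi$ on the natural frame via (\ref{4.2''}), impose the compatibility conditions (\ref{b}) using $(\eta^k)^*=\dot\partial^k H$, and read off the two scalar identities as the entries of $\mathcal{G}\mathcal{H}_1=I_{2n}$, with the analogous computation for $d\Phi^{-1}$. One small caveat: your closing remark that the nonholonomic vertical frame of Subsection~5.1 is ``essential'' here overstates its role---the paper's argument for Theorem~5.2 does not invoke that frame at all (the condition $\mathcal{H}_1\mathcal{H}_2=I_{2n}$ belongs to Theorem~5.1, not to the present statement), so that paragraph can be dropped without loss.
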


An immediate consequence of the above Theorem is that the Hessian matrix $%
\mathcal{G}$ is invertible. Of course, hereinafter all considerations on $%
\mathcal{L}-$ dual process can be made under assumption that the complex
Finsler space $(M,F)$ has locally $\det \mathcal{G}\neq 0.$ A such complex
Finsler space will be called \textit{locally regulate}. Looking back on
Theorem 5.1, we conclude that in the $\mathcal{L}-$ dual Cartan space, of a
locally regulate Finsler space, there exists a nonholonomic vertical frame
in any local chart.

Now, replacing (\ref{9}) in the second relation from (\ref{4.2''}), we find
that the image by $\Phi $ of the frame $\dot{\partial}_{k}$ is the vertical
frame (\ref{9}), i.e. $(\dot{\partial}_{k})^{\ast }=\frac{\partial }{%
\partial \zeta ^{k}},$ which together (\ref{4.2}), yields%
\begin{eqnarray}
(g_{kj})^{\ast } &=&h_{kj}\;;\;(g_{j\bar{r}})^{\ast }=h_{j\bar{r}%
}\;;\;(g^{ks})^{\ast }=h^{ks}\;;\;(g^{\bar{p}k})^{\ast }=h^{\bar{p}k};
\label{10} \\
(\dot{\partial}_{k})^{\ast }(\frac{\partial L}{\partial z^{i}}) &=&-\frac{%
\partial }{\partial \zeta ^{k}}(\frac{\partial ^{\ast }H}{\partial z^{i}}%
)=-h_{kr}\dot{\partial}^{r}(\frac{\partial ^{\ast }H}{\partial z^{i}})-h_{k%
\bar{m}}\dot{\partial}^{\bar{m}}(\frac{\partial ^{\ast }H}{\partial z^{i}});
\notag \\
(\dot{\partial}_{\bar{k}})^{\ast }(\frac{\partial L}{\partial z^{i}}) &=&-%
\frac{\partial }{\partial \bar{\zeta}^{k}}(\frac{\partial ^{\ast }H}{%
\partial z^{i}})=-h_{\bar{r}\bar{k}}\dot{\partial}^{\bar{r}}(\frac{\partial
^{\ast }H}{\partial z^{i}})-h_{l\bar{k}}\dot{\partial}^{l}(\frac{\partial
^{\ast }H}{\partial z^{i}}).  \notag
\end{eqnarray}

Moreover, using (\ref{4.2''}) and $C-F$ $(c.n.c.),$ $N_{i}^{k}=g^{\overline{m%
}k}\frac{\partial g_{l\overline{m}}}{\partial z^{i}}\eta ^{l}=g^{\overline{m}%
k}\frac{\partial ^{2}L}{\partial z^{i}\partial \bar{\eta}^{m}}$, it results%
\begin{equation*}
(\frac{\partial }{\partial z^{i}})^{\ast }-N_{i}^{k}(\dot{\partial}%
_{k})^{\ast }=\frac{\partial ^{\ast }}{\partial z^{i}}+\mathring{N}_{ki}\dot{%
\partial}^{k},
\end{equation*}%
with $\mathring{N}_{ki}:=\frac{\partial ^{2}L}{\partial z^{i}\partial \eta
^{k}}-g_{jk}N_{i}^{j}.$ Next, due to (\ref{10}), we obtain that the image by
$\mathcal{L}-$ duality of $\mathring{N}_{ki}$ is $C-C$ $(c.n.c.),$ i.e. $(%
\mathring{N}_{ki})^{\ast }=N_{ki}$ and so, $(\delta _{k})^{\ast }=\delta
_{k}^{\ast }.$

\begin{proposition}
The $\mathcal{L}-$ dual of the $C-F$ connection is a connection $D\Gamma
^{\ast }$ with local coefficients%
\begin{eqnarray*}
H_{jk}^{\ast i} &=&H_{jk}^{i}\;;\;H_{j\bar{k}}^{\ast i}=0; \\
V_{j}^{\ast ik} &=&-h^{kr}h_{r\bar{m}}h_{j\bar{s}}(\dot{\partial}^{\bar{m}%
}h^{\bar{s}i})\;;\;V_{j}^{\ast i\bar{k}}=h^{\bar{k}r}h_{rl}V_{j}^{il}-h_{j%
\bar{s}}(\dot{\partial}^{\bar{k}}h^{\bar{s}i}).
\end{eqnarray*}
\end{proposition}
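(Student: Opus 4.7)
The plan is to define $D\Gamma^{\ast}$ as the pushforward of the Chern-Finsler connection $D$ along $\Phi$: for vector fields $X,Y$ on $T^{\prime}M$, set $D^{\ast}_{X^{\ast}}Y^{\ast}:=(D_{X}Y)^{\ast}$. The four coefficients in the statement are then read off by evaluating $D^{\ast}$ on the adapted basis $\{\delta^{\ast}_{k},\dot{\partial}^{k}\}$ of $T^{\prime\ast}M$, using the already-established identifications $(\delta_{k})^{\ast}=\delta^{\ast}_{k}$ and $(\dot{\partial}_{k})^{\ast}=\partial/\partial\zeta^{k}$, together with $(g_{j\bar{r}})^{\ast}=h_{j\bar{r}}$ and $(g^{\bar{r}i})^{\ast}=h^{\bar{r}i}$ from (\ref{10}). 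The wrinkle is that the pushforward of $\dot{\partial}_{k}$ is the nonholonomic frame $\partial/\partial\zeta^{k}$, not $\dot{\partial}^{k}$, so expressing the $V^{\ast}$ components in the $\dot{\partial}^{k}$ basis will require switching via (\ref{2'}) and (\ref{9}); this is what produces the extra $h^{kl},h_{lm},h_{l\bar{m}}$ factors.

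For the horizontal block the computation is direct. Since $\Phi$ sends $\delta_{k}$ to $\delta^{\ast}_{k}$ and $D_{\delta_{k}}\delta_{j}=L^{i}_{jk}\delta_{i}$ in $C$-$F$, we have $D^{\ast}_{\delta^{\ast}_{k}}\delta^{\ast}_{j}=(L^{i}_{jk})^{\ast}\delta^{\ast}_{i}$. Writing $L^{i}_{jk}=g^{\bar{l}i}\delta_{k}g_{j\bar{l}}$ and transporting term by term yields $(L^{i}_{jk})^{\ast}=h^{\bar{l}i}\delta^{\ast}_{k}h_{j\bar{l}}=H^{i}_{jk}$, hence $H^{\ast i}_{jk}=H^{i}_{jk}$. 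Moreover the $(1,0)$-type property of $C$-$F$ forces $L^{i}_{j\bar{k}}=0$, so $H^{\ast i}_{j\bar{k}}=0$.

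For the vertical block, I would expand $\dot{\partial}^{k}=h^{kl}(\dot{\partial}_{l})^{\ast}+h^{\bar{m}k}(\dot{\partial}_{\bar{m}})^{\ast}$ via (\ref{9}); combined with $C^{i}_{j\bar{m}}=0$ in $C$-$F$, this collapses $D^{\ast}_{\dot{\partial}^{k}}\delta^{\ast}_{j}$ to $h^{kl}(C^{i}_{jl})^{\ast}\delta^{\ast}_{i}$. To evaluate $(C^{i}_{jl})^{\ast}$, rewrite $C^{i}_{jl}=-g_{j\bar{r}}\dot{\partial}_{l}g^{\bar{r}i}$ (by differentiating $g_{j\bar{r}}g^{\bar{r}i}=\delta^{i}_{j}$) and transport: $(\dot{\partial}_{l}g^{\bar{r}i})^{\ast}=\partial h^{\bar{r}i}/\partial\zeta^{l}=h_{lm}\dot{\partial}^{m}h^{\bar{r}i}+h_{l\bar{m}}\dot{\partial}^{\bar{m}}h^{\bar{r}i}$ via (\ref{2'}). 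This gives $(C^{i}_{jl})^{\ast}=h_{lm}V^{im}_{j}-h_{l\bar{m}}h_{j\bar{r}}\dot{\partial}^{\bar{m}}h^{\bar{r}i}$. Substituting into $V^{\ast ik}_{j}\delta^{\ast}_{i}=h^{kl}(C^{i}_{jl})^{\ast}\delta^{\ast}_{i}$ kills the first term, because $h_{lm}=A_{lm}$ combined with (\ref{2}) gives $h^{kl}h_{lm}=0$, leaving exactly $V^{\ast ik}_{j}=-h^{kr}h_{r\bar{m}}h_{j\bar{s}}\dot{\partial}^{\bar{m}}h^{\bar{s}i}$.

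The same scheme applied to $\dot{\partial}^{\bar{k}}=h^{\bar{k}l}(\dot{\partial}_{l})^{\ast}+h^{\bar{k}\bar{l}}(\dot{\partial}_{\bar{l}})^{\ast}$ produces $V^{\ast i\bar{k}}_{j}=h^{\bar{k}l}h_{lm}V^{im}_{j}-h^{\bar{k}l}h_{l\bar{m}}h_{j\bar{r}}\dot{\partial}^{\bar{m}}h^{\bar{r}i}$; now the first term does not vanish and matches the leading piece of the claim, while the second simplifies via the inverse identity $h^{\bar{k}l}h_{l\bar{m}}=\delta^{\bar{k}}_{\bar{m}}$ (read off from the defining relation $h_{i\bar{j}}h^{\bar{j}k}=\delta^{k}_{i}$ by matrix inversion) to $-h_{j\bar{s}}\dot{\partial}^{\bar{k}}h^{\bar{s}i}$. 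The main obstacle is exactly this bookkeeping: the two identities $h^{kl}h_{lm}=0$ and $h^{\bar{k}l}h_{l\bar{m}}=\delta^{\bar{k}}_{\bar{m}}$ pull the computation in opposite directions, one killing a term outright and the other merely collapsing a sum, which is precisely what accounts for the different shapes of the two $V^{\ast}$ formulas.
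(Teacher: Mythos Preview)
Your proposal is correct and follows essentially the same route as the paper. The paper computes $(L_{jk}^{i})^{\ast}=h^{\bar{l}i}\delta_{k}^{\ast}h_{j\bar{l}}=H_{jk}^{i}$ and then $(C_{jk}^{i})^{\ast}=h^{\bar{l}i}\frac{\partial h_{j\bar{l}}}{\partial\zeta^{k}}=-h_{kr}h_{j\bar{s}}(\dot{\partial}^{r}h^{\bar{s}i})-h_{k\bar{m}}h_{j\bar{s}}(\dot{\partial}^{\bar{m}}h^{\bar{s}i})$ via (\ref{2'}), and \emph{defines} $V_{j}^{\ast ik}:=h^{kr}(C_{jr}^{i})^{\ast}$, $V_{j}^{\ast i\bar{k}}:=h^{\bar{k}r}(C_{jr}^{i})^{\ast}$; your pushforward framing gives a cleaner justification for these definitions (they are exactly the coefficients of $D^{\ast}$ in the $\dot{\partial}^{k}$ basis via (\ref{9})), but the intermediate expression for $(C_{jl}^{i})^{\ast}$ and the two simplifications $h^{kl}h_{lm}=0$, $h^{\bar{k}l}h_{l\bar{m}}=\delta_{\bar{m}}^{\bar{k}}$ are identical to the paper's.
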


\begin{proof} Indeed,

$H_{jk}^{\ast i}:=(L_{jk}^{i})^{\ast }=[g^{\overline{l}i}(\delta _{k}g_{j%
\overline{l}})]^{\ast }=h^{\overline{l}i}(\delta _{k}^{\ast }h_{j\overline{l}%
})=H_{jk}^{i}$.
\medskip
Now,

$(C_{jk}^{i})^{\ast }=[g^{\overline{l}i}(\dot{\partial}_{k}g_{j\overline{l}%
})]^{\ast }=h^{\overline{l}i}\frac{\partial h_{j\overline{l}}}{\partial
\zeta ^{k}}=h^{\overline{l}i}h_{kr}(\dot{\partial}^{r}h_{j\overline{l}})+h^{%
\overline{l}i}h_{k\bar{m}}(\dot{\partial}^{\bar{m}}h_{j\overline{l}})$

$=-h_{kr}h_{j\bar{s}}(\dot{\partial}^{r}h^{\bar{s}i})-h_{k\bar{m}}h_{j\bar{s}%
}(\dot{\partial}^{\bar{m}}h^{\bar{s}i}).$ Thus,

$V_{j}^{\ast ik}:=h^{kr}(C_{jr}^{i})^{\ast }=-h^{kr}h_{r\bar{m}}h_{j\bar{s}}(%
\dot{\partial}^{\bar{m}}h^{\bar{s}i})$ and

$V_{j}^{\ast i\bar{k}}:=h^{\bar{k}r}(C_{jr}^{i})^{\ast }=h^{\bar{k}%
r}h_{rl}V_{j}^{il}-h_{j\bar{s}}(\dot{\partial}^{\bar{k}}h^{\bar{s}i}).$
\end{proof}

\begin{proposition}
Let $M$ $\ $be a complex manifold with the $\mathcal{L}-$ dual metrics $F$
and $\mathcal{C}$ given by (\ref{4.2}). If $F$ is a weakly K\"{a}hler
metric, then $\mathcal{C}$ is a weakly K\"{a}hler metric, too.
\end{proposition}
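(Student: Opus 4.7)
The plan is to transport the weakly K\"ahler–Finsler identity across the complex Legendre transformation $\Phi$ and verify that its image is literally the weakly K\"ahler–Cartan identity. The reduction to a dictionary lookup relies on two ingredients that are already at our disposal: Proposition 5.1 (which identifies the $\mathcal{L}$-dual of the Chern–Finsler coefficients $L_{jk}^{i}$ with $H_{jk}^{i}$), and the formulas (\ref{4.0}) and (\ref{10}) (which describe how the canonical objects $\eta^{k}$, $\bar\eta^{l}$, $g_{i\bar l}$, etc., are sent by $\Phi$).

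First, since the pure-horizontal coefficient transforms as $(L_{jk}^{i})^{\ast }=H_{jk}^{i}$, the $h$-torsion of Chern–Finsler transports to the $h$-torsion of Chern–Cartan:
\begin{equation*}
(T_{jk}^{i})^{\ast }=(L_{jk}^{i}-L_{kj}^{i})^{\ast }=H_{jk}^{i}-H_{kj}^{i}=T_{jk}^{\ast i}.
\end{equation*}

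Next, I would compute the $\mathcal{L}$-dual of each remaining factor in $g_{i\bar l}T_{jk}^{i}\eta^{j}\bar\eta^{l}$. From (\ref{4.0}), combined with the $(1,1)$-homogeneity of $H$ recorded in Section~2, $(\eta^{j})^{\ast }=\dot{\partial}^{j}H=h^{\bar m j}\zeta_{\bar m}=\zeta^{j}$, and by conjugation $(\bar\eta^{l})^{\ast }=\bar\zeta^{l}=h^{m\bar l}\zeta_{m}$. From (\ref{10}), $(g_{i\bar l})^{\ast }=h_{i\bar l}$. Using the inverse-metric identity $h_{i\bar l}h^{m\bar l}=\delta_{i}^{m}$, these combine to
\begin{equation*}
(g_{i\bar l}\bar\eta^{l})^{\ast }=h_{i\bar l}\,\bar\zeta^{l}=h_{i\bar l}\,h^{m\bar l}\zeta_{m}=\zeta_{i}.
\end{equation*}

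Putting these three pieces together, the image under $\Phi$ of the weakly K\"ahler–Finsler relation is
\begin{equation*}
0=(g_{i\bar l}T_{jk}^{i}\eta^{j}\bar\eta^{l})^{\ast }=T_{jk}^{\ast i}\,\zeta^{j}\,\zeta_{i},
\end{equation*}
which is precisely the weakly K\"ahler–Cartan condition from Section~3. Since $\Phi$ is a local diffeomorphism onto $T^{\prime \ast}M$, this identity holds at every point, so $\mathcal{C}$ is weakly K\"ahler–Cartan. The only point requiring care—and hence the main potential obstacle—is the index bookkeeping: one must make sure the two contractions on the Finsler side (the natural contraction $T_{jk}^{i}\eta^{j}$ and the Hermitian pairing with $\bar\eta^{l}$ via $g_{i\bar l}$) are correctly identified on the Cartan side as the sharp $\zeta^{j}=h^{\bar m j}\zeta_{\bar m}$ and the flat $\zeta_{i}=h_{i\bar l}\bar\zeta^{l}$, respectively; once these identifications are verified the result is immediate.
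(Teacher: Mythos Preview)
Your argument is correct, but it follows a somewhat different route from the paper's own proof. The paper does not transport the factored expression $g_{i\bar l}T_{jk}^{i}\eta^{j}\bar\eta^{l}$ term by term via Proposition~5.1 and (\ref{10}). Instead it first rewrites the weakly K\"ahler--Finsler condition on the Finsler side as
\[
0=g_{i\bar l}(L_{jk}^{i}-L_{kj}^{i})\eta^{j}\bar\eta^{l}
=\frac{\partial L}{\partial z^{k}}-\mathring{N}_{kj}\eta^{j},
\]
then transports using the more primitive identities $(\partial L/\partial z^{k})^{\ast}=-\partial^{\ast}H/\partial z^{k}$ and $(\mathring{N}_{ki})^{\ast}=N_{ki}$, obtaining $0=-\delta_{k}^{\ast}H+(N_{jk}-N_{kj})\zeta^{j}$, and finally invokes the horizontal metrical property of the Chern--Cartan $(c.n.c.)$, namely $\delta_{k}^{\ast}H=0$, to isolate $(N_{jk}-N_{kj})\zeta^{j}=0$. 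Your approach is slightly cleaner: once Proposition~5.1 is in place, the factor-by-factor dictionary you assemble (using $(\eta^{j})^{\ast}=\zeta^{j}$, $(g_{i\bar l})^{\ast}=h_{i\bar l}$, and multiplicativity of~$^{\ast}$ on scalar functions, which the paper itself uses implicitly in the proof of Proposition~5.1) lands directly on $T_{jk}^{\ast i}\zeta_{i}\zeta^{j}=0$ without the detour through $\mathring{N}_{kj}$ or the separate appeal to $\delta_{k}^{\ast}H=0$. The paper's route, on the other hand, relies only on objects introduced prior to Proposition~5.1 and makes explicit the role of the $C$--$C$ nonlinear connection.
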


\begin{proof} The weakly K\"{a}hler property of $F$ can be rewritten as

$0=g_{i\bar{l}}(L_{jk}^{i}-L_{kj}^{i})\eta ^{j}\bar{\eta}^{l}=\frac{\partial
L}{\partial z^{k}}-\mathring{N}_{kj}\eta ^{j}=-(\frac{\partial ^{\ast }H}{%
\partial z^{k}}+\mathring{N}_{jk}\eta ^{j})+(\mathring{N}_{jk}-\mathring{N}%
_{kj})\eta ^{j}.$

Due to (\ref{4.2}) and $(\mathring{N}_{ki})^{\ast }=N_{ki}$, the image by $%
\mathcal{L}-$ duality of the last relation is

$0=-\frac{\delta ^{\ast }H}{\delta z^{k}}+(N_{jk}-N_{kj})\zeta ^{j}.$

Since $N_{ki}$ are local coefficients of $C-C$ $(c.n.c.),$ then $\frac{%
\delta ^{\ast }H}{\delta z^{k}}=0$ and so, $(N_{jk}-N_{kj})\zeta ^{j}=0,$
which gives our claim.
\end{proof}

\subsection{Geodesic curves of a complex Cartan space}

Let $\sigma :[a,b]\rightarrow \widetilde{T^{\prime }M}$ be a parametrized
curve which, in a local chart of $\widetilde{T^{\prime }M}$, $\ $is given by
$\sigma (s)=\left( z^{k}(s),\eta ^{k}(s)\right) ,$ $s\in \lbrack a,b],$ $k=%
\overline{1,n}$, where $\eta ^{k}(s)=\frac{dz^{k}}{ds}$ is a tangent vector
to the curve $(z^{k}(s))$ on $M.$

In \cite{A-P}' s sense, $\sigma (s)$ is a complex geodesic curve on a
complex Finsler space $(M,F)$ if and only if it is solution of the equations%
\begin{equation}
\frac{d^{2}z^{k}}{ds^{2}}+2G^{k}(z(s),\frac{dz}{ds})=\Theta ^{k}(z(s),\frac{%
dz}{ds})\;;\;k=\overline{1,n},  \label{2.1}
\end{equation}%
where $\Theta ^{k}=2g^{\bar{j}k}(\overset{c}{\delta _{\bar{j}}}L)=g^{\bar{m}%
k}g_{j\bar{l}}(L_{\bar{n}\bar{m}}^{\bar{l}}-L_{\bar{m}\bar{n}}^{\bar{l}%
})\eta ^{j}\bar{\eta}^{n}.$ Note that $\Theta ^{k}$ is vanishing if and only
if the space $(M,F)$ is weakly K\"{a}hler.

With notation $T^{k}:=\frac{d}{ds}(\frac{dz^{k}}{ds})+N_{l}^{k}\frac{dz^{l}}{%
ds},$ the equations (\ref{2.1}) become%
\begin{equation}
T^{k}=\Theta ^{k}\;;\;k=\overline{1,n}.  \label{2.2}
\end{equation}

Let us consider $(M,F)$ a locally regulate complex Finsler space and $(M,%
\mathcal{C})$ is the complex Cartan space, obtained by $\mathcal{L}-$ dual
process. Further on, our goal is to determine the image by $\mathcal{L}-$
duality of the complex geodesic curve $\sigma (s).$ The first question is
how to map a curve from $T^{\prime }M$ into a curve on $T^{\prime \ast }M$
or, more precisely, on $T^{\prime \prime \ast }M$?

\medskip By definition, a curve on $T^{\prime \ast }M$ is a map%
\begin{equation*}
s\rightarrow \sigma ^{\ast }(s)=(z^{k}(s),\zeta _{k}(s)),
\end{equation*}
where $\zeta _{k}(s)$ are the components of $(1,0)$-form. We know the
isomorphism between the tangent and cotangent spaces, via a metric tensor,
but this is defined on $M.$

On a complex Cartan space $(M,\mathcal{C}),$ with the metric tensor $h^{\bar{%
j}k}(z,\zeta ),$ we can consider the tangent vector (from $T_{C,u^{\ast
}}(T^{\prime \ast }M)$) to a curve $\sigma _{1}^{\ast }(s),$ given by $X=(%
\frac{dz^{k}}{ds},\zeta ^{k}=h^{\bar{m}k}\zeta _{\bar{m}}(s)).$ When $\zeta
^{k}=\frac{dz^{k}}{ds}$, that is $\sigma _{1}^{\ast }(s)=\sigma ^{\ast
}(s)=(z^{k}(s),\zeta _{k}=h_{k\bar{m}}\frac{d\bar{z}^{m}}{ds})$, we say that
$\sigma ^{\ast }(s)$ is the image by $\mathcal{L}-$ duality of the curve $%
\sigma (s)=(z^{k}(s),\eta ^{k}(s))$ from $T^{\prime }M,$ where $\eta ^{k}(s)=%
\frac{dz^{k}}{ds}.$ It is clear that $\zeta ^{k}=\frac{dz^{k}}{ds}$\ are the
components of a tangent vector to the curve $\sigma ^{\ast }(s),$ which is $%
\mathcal{L}-$ dual of $\sigma (s).$ Making an excessive use of notation, we
write $\sigma ^{\ast }(s):=[\sigma (s)]^{\ast },$ which in a local chart of $%
\widetilde{T^{\prime \ast }M}$, $\ $is $z^{k}=z^{k}(s),$ $\zeta _{k}=\zeta
_{k}(s),$ $k=\overline{1,n}.$

For two $\mathcal{L}-$ dual curves, we have%
\begin{equation}
\left( T^{k}\right) ^{\ast }=\left( \Theta ^{k}\right) ^{\ast }\;;\;k=%
\overline{1,n},  \label{2.3}
\end{equation}%
with $\frac{dz^{k}}{ds}=[\eta ^{k}(s)]^{\ast }=\zeta ^{k}(s).$

Taking into account (\ref{4.2}) and Proposition 5.2, we obtain%
\begin{equation}
\Theta ^{\ast k}:=\left( \Theta ^{k}\right) ^{\ast }=h^{\bar{m}k}(N_{\bar{r}%
\bar{m}}-N_{\bar{m}\bar{r}})\zeta ^{\bar{r}},  \label{2.4}
\end{equation}%
which is $(1,1)$-homogeneous with respect to the variables $\zeta =(\zeta
_{k}).\ $

Moreover, the space $(M,\mathcal{C})$ is weakly K\"{a}hler iff $\Theta
^{\ast k}=0.$

\medskip In order to obtain $\left( T^{k}\right) ^{\ast }$, we can rewrite $%
\frac{d^{2}z^{k}}{ds^{2}}$ as follows:

\medskip $\frac{d}{ds}(\frac{dz^{k}}{ds})=\frac{d}{ds}\eta ^{k}=\frac{d}{ds}%
(g^{\bar{m}k}\eta _{\bar{m}})=$

$=[\frac{\partial g^{\bar{m}k}}{\partial z^{l}}\frac{dz^{l}}{ds}+\frac{%
\partial g^{\bar{m}k}}{\partial \bar{z}^{r}}\frac{d\bar{z}^{r}}{ds}+(\,\dot{%
\partial}_{l}g^{\bar{m}k})\frac{d\eta ^{l}}{ds}+(\,\dot{\partial}_{\bar{r}%
}g^{\bar{m}k})\frac{d\bar{\eta}^{r}}{ds}]\eta _{\bar{m}}+g^{\bar{m}k}\frac{%
d\eta _{\bar{m}}}{ds}$

$=-g^{\bar{m}i}g^{\bar{n}k}[\frac{\partial g_{i\bar{n}}}{\partial z^{l}}%
\frac{dz^{l}}{ds}+\frac{\partial g_{i\bar{n}}}{\partial \bar{z}^{r}}\frac{d%
\bar{z}^{r}}{ds}+(\,\dot{\partial}_{l}g_{i\bar{n}})\frac{d\eta ^{l}}{ds}+(\,%
\dot{\partial}_{\bar{r}}g_{i\bar{n}})\frac{d\bar{\eta}^{r}}{ds}]\eta _{\bar{m%
}}+g^{\bar{m}k}\frac{d\eta _{\bar{m}}}{ds}$

$=-g^{\bar{n}k}[\frac{\partial g_{i\bar{n}}}{\partial z^{l}}\frac{dz^{l}}{ds}%
+\frac{\partial g_{i\bar{n}}}{\partial \bar{z}^{r}}\frac{d\bar{z}^{r}}{ds}%
+(\,\dot{\partial}_{l}g_{i\bar{n}})\frac{d\eta ^{l}}{ds}+(\,\dot{\partial}_{%
\bar{r}}g_{i\bar{n}})\frac{d\bar{\eta}^{r}}{ds}]\eta ^{i}+g^{\bar{m}k}\frac{%
d\eta _{\bar{m}}}{ds}$

$=-N_{l}^{k}\frac{dz^{l}}{ds}-g^{\bar{n}k}\frac{\partial ^{2}L}{\partial
\bar{z}^{r}\partial \bar{\eta}^{n}}\frac{d\bar{z}^{r}}{ds}-g^{\bar{n}k}\frac{%
\partial ^{2}L}{\partial \bar{\eta}^{r}\partial \bar{\eta}^{n}}\frac{d\bar{%
\eta}^{r}}{ds}+g^{\bar{m}k}\frac{d\eta _{\bar{m}}}{ds}.$

This implies,

$\frac{d}{ds}(\frac{dz^{k}}{ds})+N_{l}^{k}\frac{dz^{l}}{ds}=-g^{\bar{n}k}(%
\frac{\partial ^{2}L}{\partial \bar{z}^{r}\partial \bar{\eta}^{n}}\frac{d%
\bar{z}^{r}}{ds}+g_{\bar{r}\bar{n}}\frac{d\bar{\eta}^{r}}{ds}-\frac{d\eta _{%
\bar{n}}}{ds})$

$=-g^{\bar{n}k}[(\frac{\partial ^{2}L}{\partial \bar{z}^{r}\partial \bar{\eta%
}^{n}}-g_{\bar{s}\bar{n}}N_{\bar{r}}^{\bar{s}})\frac{d\bar{z}^{r}}{ds}+g_{%
\bar{s}\bar{n}}N_{\bar{r}}^{\bar{s}}\frac{d\bar{z}^{r}}{ds}+g_{\bar{r}\bar{n}%
}\frac{d^{2}\bar{z}^{r}}{ds^{2}}-\frac{d\eta _{\bar{n}}}{ds})$

$=-g^{\bar{n}k}[\mathring{N}_{\bar{n}\bar{r}}\frac{d\bar{z}^{r}}{ds}+g_{\bar{%
r}\bar{n}}(\frac{d^{2}\bar{z}^{r}}{ds^{2}}+N_{\bar{m}}^{\bar{r}}\frac{d\bar{z%
}^{m}}{ds})-\frac{d\eta _{\bar{n}}}{ds}],$ which gives%
\begin{equation}
T^{k}=-g^{\bar{n}k}[\mathring{N}_{\bar{n}\bar{r}}\frac{d\bar{z}^{r}}{ds}+g_{%
\bar{r}\bar{n}}T^{\bar{r}}-\frac{d\eta _{\bar{n}}}{ds}],  \label{2.5}
\end{equation}%
and by $\mathcal{L}-$ duality, it leads to%
\begin{equation}
\left( T^{k}\right) ^{\ast }=-h^{\bar{n}k}[N_{\bar{n}\bar{r}}\frac{d\bar{z}%
^{r}}{ds}+h_{\bar{r}\bar{n}}(T^{\bar{r}})^{\ast }-\frac{d\zeta _{\bar{n}}}{ds%
}].  \label{2.6}
\end{equation}

Now, substituting (\ref{2.3}) and (\ref{2.4}) into (\ref{2.6}), it results%
\begin{equation}
\Theta ^{\ast k}=-h^{\bar{n}k}[N_{\bar{n}\bar{r}}\frac{d\bar{z}^{r}}{ds}+h_{%
\bar{r}\bar{n}}\Theta ^{\ast \bar{r}}-\frac{d\zeta _{\bar{n}}}{ds}].
\label{2.7}
\end{equation}%
This is equivalent with

$\Theta ^{\ast k}+h^{\bar{n}k}h_{\bar{r}\bar{n}}\Theta ^{\ast \bar{r}}=h^{%
\bar{n}k}[\frac{d\zeta _{\bar{n}}}{ds}-N_{\bar{r}\bar{n}}\frac{d\bar{z}^{r}}{%
ds}]+h^{\bar{n}k}(N_{\bar{r}\bar{n}}-N_{\bar{n}\bar{r}})\frac{d\bar{z}^{r}}{%
ds}$

$=h^{\bar{n}k}[\frac{d\zeta _{\bar{n}}}{ds}-N_{\bar{r}\bar{n}}\frac{d\bar{z}%
^{r}}{ds}]+\Theta ^{\ast k}$ which leads to $\frac{d\zeta _{\bar{n}}}{ds}-N_{%
\bar{r}\bar{n}}\frac{d\bar{z}^{r}}{ds}=h_{\bar{r}\bar{n}}\Theta ^{\ast \bar{r%
}}.$ Thus, we have proved.

\begin{theorem}
Let $M$ $\ $be a complex manifold with the $\mathcal{L}-$ dual metrics $F$
and $\mathcal{C}$ given by (\ref{4.2}). If $\sigma (s)$ is a complex
geodesic curve on the complex Finsler space $(M,F)$ then its image by $%
\mathcal{L}-$ duality $\sigma ^{\ast }(s)$ satisfies the equations%
\begin{equation}
\frac{dz^{k}}{ds}=\zeta ^{k}\;;\;\;\frac{d\zeta _{k}}{ds}-N_{jk}\frac{dz^{j}%
}{ds}=h_{jk}\Theta ^{\ast j}\;;\;\;k=\overline{1,n}.  \label{2.8}
\end{equation}
Moreover, if $(M,F)$ is weakly K\"{a}hler, then $\sigma ^{\ast }(s)$
satisfies%
\begin{equation}
\frac{dz^{k}}{ds}=\zeta ^{k}\;;\;\;\frac{d\zeta _{k}}{ds}-N_{jk}\frac{dz^{j}%
}{ds}=0\;;\;\;k=\overline{1,n}.  \label{2.9}
\end{equation}
\end{theorem}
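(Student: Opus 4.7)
The plan is to start from the complex Finsler geodesic equation $T^{k}=\Theta^{k}$ on $(M,F)$ in the form (\ref{2.2}), and transport it by $\Phi$ into a differential system for $\sigma^{\ast}(s)=(z^{k}(s),\zeta_{k}(s))$ on $T^{\prime\ast}M$. The main subtlety is that while $\frac{dz^{k}}{ds}$ along $\sigma^{\ast}(s)$ is just the $\mathcal{L}$-dual $\zeta^{k}$ of the original tangent vector, the fibre coordinate $\zeta_{k}=(\dot\partial_{k}L)(z(s),\eta(s))$ is not independent: its $s$-derivative couples to $\frac{d\eta^{k}}{ds}$, hence to $T^{k}$. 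So the task is to rewrite $T^{k}$ in a form whose $\mathcal{L}$-dual produces a clean ODE for $\zeta_{k}$.

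First I would expand $\frac{d\eta^{k}}{ds}$ by using $\eta^{k}=g^{\bar{m}k}\eta_{\bar{m}}$ together with the chain rule. Invoking the identity $\partial g^{\bar{m}k}=-g^{\bar{m}i}g^{\bar{n}k}\partial g_{i\bar{n}}$ and the definition of the canonical nonlinear coefficient $\mathring N_{\bar{n}\bar{r}}=\frac{\partial^{2}L}{\partial\bar{z}^{r}\partial\bar{\eta}^{n}}-g_{\bar{s}\bar{n}}N_{\bar{r}}^{\bar{s}}$, a direct computation produces
\begin{equation*}
T^{k}=-g^{\bar{n}k}\!\left[\mathring N_{\bar{n}\bar{r}}\frac{d\bar{z}^{r}}{ds}+g_{\bar{r}\bar{n}}T^{\bar{r}}-\frac{d\eta_{\bar{n}}}{ds}\right].
\end{equation*}
Then I would apply $\Phi$ term by term using the correspondences $(g^{\bar{n}k})^{\ast}=h^{\bar{n}k}$, $(g_{\bar{r}\bar{n}})^{\ast}=h_{\bar{r}\bar{n}}$, $(\mathring N_{ki})^{\ast}=N_{ki}$, and $(\eta_{k})^{\ast}=\zeta_{k}$ from (\ref{4.2}), (\ref{4.0}), Theorem 5.1 and Proposition 5.1, combined with $(T^{k})^{\ast}=(\Theta^{k})^{\ast}=\Theta^{\ast k}$ coming from (\ref{2.3}) and Proposition 5.2. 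This yields
\begin{equation*}
\Theta^{\ast k}=-h^{\bar{n}k}\!\left[N_{\bar{n}\bar{r}}\frac{d\bar{z}^{r}}{ds}+h_{\bar{r}\bar{n}}\Theta^{\ast\bar{r}}-\frac{d\zeta_{\bar{n}}}{ds}\right].
\end{equation*}

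The last step is algebraic: substitute the explicit form (\ref{2.4}) of $\Theta^{\ast k}$ into the combination $\Theta^{\ast k}+h^{\bar{n}k}h_{\bar{r}\bar{n}}\Theta^{\ast\bar{r}}$, and use $\frac{d\bar{z}^{r}}{ds}=\zeta^{\bar{r}}$ to make the term $h^{\bar{n}k}(N_{\bar{r}\bar{n}}-N_{\bar{n}\bar{r}})\frac{d\bar{z}^{r}}{ds}$ cancel against $\Theta^{\ast k}$. What remains is $\frac{d\zeta_{\bar{n}}}{ds}-N_{\bar{r}\bar{n}}\frac{d\bar{z}^{r}}{ds}=h_{\bar{r}\bar{n}}\Theta^{\ast\bar{r}}$, whose complex conjugate is exactly (\ref{2.8}). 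The weakly K\"ahler conclusion (\ref{2.9}) then follows at once, since in that case $\Theta^{\ast k}=0$ (the observation just below (\ref{2.4})).

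I expect the main obstacle to be the bookkeeping rather than any conceptual leap: the derivation swaps repeatedly between holomorphic and antiholomorphic indices, between $T^{\prime}M$ and $T^{\prime\ast}M$ via $\Phi$, and between the natural vertical frame and the nonholonomic frame $\{\partial/\partial\zeta^{k}\}$ of Subsection 5.1. The delicate point is the final cancellation, which is not visible without inserting the explicit expression (\ref{2.4}) for $\Theta^{\ast k}$ and exploiting its $(1,1)$-homogeneity in $\zeta$.
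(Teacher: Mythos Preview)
Your proposal is correct and follows essentially the same route as the paper's own argument: derive the identity $T^{k}=-g^{\bar n k}[\mathring N_{\bar n\bar r}\tfrac{d\bar z^{r}}{ds}+g_{\bar r\bar n}T^{\bar r}-\tfrac{d\eta_{\bar n}}{ds}]$ by expanding $\tfrac{d}{ds}(g^{\bar m k}\eta_{\bar m})$, push it through $\Phi$ using the dictionary $(g_{j\bar r})^{\ast}=h_{j\bar r}$, $(\mathring N_{ki})^{\ast}=N_{ki}$, $(\eta_{\bar n})^{\ast}=\zeta_{\bar n}$, and then cancel $\Theta^{\ast k}$ against $h^{\bar n k}(N_{\bar r\bar n}-N_{\bar n\bar r})\zeta^{\bar r}$ using (\ref{2.4}). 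The only minor slip is bibliographic: the identities $(g^{\bar n k})^{\ast}=h^{\bar n k}$, $(g_{\bar r\bar n})^{\ast}=h_{\bar r\bar n}$ come from (\ref{10}) rather than Proposition~5.1, and the equality $(\Theta^{k})^{\ast}=\Theta^{\ast k}$ is the definition (\ref{2.4}) rather than Proposition~5.2.
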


It is natural to ask if $\sigma ^{\ast }(s)$ is a geodesic curve for $%
\mathcal{C}$. The answer is provided below. We start with the fact that the
image by $\mathcal{L}-$ duality of the Euler-Lagrange equations for $\sigma
(s)$ are the Hamilton-Jacobi equations for $\sigma ^{\ast }(s),$%
\begin{equation}
\frac{dz^{k}}{ds}=\frac{\partial H}{\partial \zeta _{k}}\;;\;\frac{d\zeta
_{k}}{ds}=-\frac{\partial H}{\partial z^{k}}\;;\;k=\overline{1,n}.
\label{2.10}
\end{equation}

But the equations (\ref{2.9}) are equivalent with (\ref{2.10}). Indeed, the
first equation from (\ref{2.10}) is $\frac{dz^{k}}{ds}=h^{\bar{m}k}\zeta _{%
\bar{m}}={\zeta}^k $. The second equation (\ref{2.10}) can be rewritten as $%
\frac{d\zeta _{k}}{ds}=-\frac{\partial }{\partial z^{k}}\left( h^{\bar{m}%
l}\zeta _{l}\zeta _{\bar{m}}\right) .$ This is equivalent with $\frac{d\zeta
_{k}}{ds}=-\frac{\partial h^{\bar{m}l}}{\partial z^{k}}\zeta _{l}\zeta _{%
\bar{m}},$ which leads to $\frac{d\zeta _{k}}{ds}-N_{jk}\zeta ^{j}=0.$ So,
under assumption of weakly K\"{a}hler for the metric $\mathcal{C}$, the
curve $\sigma ^{\ast }(s)$ is solution of the Hamilton-Jacobi equations (\ref%
{2.10}).

Since in the weakly K\"{a}hler case for $F$, the Euler-Lagrange equations
give the geodesics for $F$, (see \cite{Mub}, p.101), we call the curve $%
\sigma ^{\ast }(s),$ which satisfies the equations (\ref{2.8}), \textit{the
complex geodesics curve for }$\mathcal{C}$. Note that when we say complex
geodesic curve for\ $\mathcal{C}$, we simply mean the curves which are the
image by $\mathcal{L}-$ duality of a complex geodesic curve on $(M,F)$.

\begin{theorem}
Let $(M,\mathcal{C})$ $\ $be a complex Cartan space. Then $\sigma ^{\ast
}(s) $ is a complex geodesic curve for $\mathcal{C}$ if and only if%
\begin{equation}
\frac{dz^{k}}{ds}=\zeta ^{k}\;;\;\frac{d^{2}z^{k}}{ds^{2}}+H_{jl}^{k}\zeta
^{j}\zeta ^{l}=\Theta ^{\ast k}\;;\;k=\overline{1,n}.  \label{2.11}
\end{equation}
\end{theorem}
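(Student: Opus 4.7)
The plan is to deduce the equations of the theorem from the Finsler geodesic equation (2.2) by transferring it through the $\mathcal{L}$-dual process established in Section 5.2. Using the $(1,0)$-homogeneity of $N_{l}^{k}$ in $\eta$ together with the Chern--Finsler identity $L_{jl}^{k}=\dot{\partial}_{j}N_{l}^{k}$, one has $N_{l}^{k}\eta ^{l}=L_{jl}^{k}\eta ^{j}\eta ^{l}$, so (2.2) rewrites in the Christoffel-like form $\frac{d^{2}z^{k}}{ds^{2}}+L_{jl}^{k}\eta ^{j}\eta ^{l}=\Theta ^{k}$. The coordinates $z^{k}(s)$ and the second derivative $\frac{d^{2}z^{k}}{ds^{2}}$ are intrinsic to the curve and therefore invariant under $\mathcal{L}$-duality, and from (4.0) one has $(\eta ^{k})^{\ast }=\zeta ^{k}$; so the Finsler identity $\eta ^{k}(s)=\frac{dz^{k}}{ds}$ transfers to $\zeta ^{k}(s)=\frac{dz^{k}}{ds}$, which is the first equation of the theorem.

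For the second equation, Proposition 5.1 supplies $(L_{jl}^{k})^{\ast }=H_{jl}^{k}$ and formula (2.4) supplies $(\Theta ^{k})^{\ast }=\Theta ^{\ast k}$. Applying $\mathcal{L}$-duality term by term to $\frac{d^{2}z^{k}}{ds^{2}}+L_{jl}^{k}\eta ^{j}\eta ^{l}=\Theta ^{k}$ directly produces $\frac{d^{2}z^{k}}{ds^{2}}+H_{jl}^{k}\zeta ^{j}\zeta ^{l}=\Theta ^{\ast k}$, as required. The converse is immediate: a curve $\sigma ^{\ast }(s)$ satisfying (2.11) is, by $\mathcal{L}$-duality, the image of a curve $\sigma (s)$ on $T^{\prime }M$ satisfying the Finsler geodesic equation, and is therefore a complex geodesic curve for $\mathcal{C}$ in the sense fixed just before the theorem.

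The main obstacle is purely administrative: one must check that every term of the Finsler equation admits a well-defined $\mathcal{L}$-dual expressible in Cartan data, which is exactly what Theorem 5.1, Proposition 5.1 and the computation leading to (2.4) have already established. A more computational, intrinsic alternative would be to start directly from (2.8), differentiate $\zeta ^{k}=h^{\bar{m}k}\zeta _{\bar{m}}$ along $\sigma ^{\ast }(s)$, substitute the conjugate of (2.8) for $\frac{d\zeta _{\bar{m}}}{ds}$, and reduce the chain-rule expansion of $\frac{dh^{\bar{m}k}}{ds}\zeta _{\bar{m}}$ by invoking the $h^{\ast }$-metrical property of the Chern--Cartan connection together with the identities $H_{jl}^{k}\zeta _{k}=N_{jl}$ and $(\dot{\partial}^{p}h^{\bar{m}k})\zeta _{\bar{m}}=h^{pk}$; this route arrives at the same conclusion through pure index manipulation, the only nuisance being the bookkeeping of barred versus unbarred contractions.
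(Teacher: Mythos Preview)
Your main argument is correct and takes a genuinely different route from the paper. The paper proves the equivalence $(2.8)\Leftrightarrow(2.11)$ entirely on the Cartan side: it differentiates $\zeta^{k}=h^{\bar m k}\zeta_{\bar m}$ along the curve, substitutes the conjugate of $(2.8)$ for $\tfrac{d\zeta_{\bar m}}{ds}$, and reduces a long chain-rule expansion using the metrical property of the $C$--$C$ connection together with the algebraic identities $(\ref{2})$; the converse is handled by an auxiliary quantity $S_{l}$ and the relation $h^{\bar k \bar l}h^{im}h_{i\bar l}=0$. This is precisely the ``computational, intrinsic alternative'' you sketch in your final paragraph.

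Your primary argument instead pulls the Finsler equation $\tfrac{d^{2}z^{k}}{ds^{2}}+L^{k}_{jl}\eta^{j}\eta^{l}=\Theta^{k}$ through the $\mathcal{L}$-duality term by term, invoking Proposition~5.1 for $(L^{k}_{jl})^{\ast}=H^{k}_{jl}$ and $(2.4)$ for $(\Theta^{k})^{\ast}=\Theta^{\ast k}$. This is considerably shorter and exploits the dictionary already built in \S5.2, at the price of requiring the $\mathcal{L}$-dual Finsler space to exist (i.e.\ the regularity hypothesis $\mathcal{G}\mathcal{H}_{1}=I_{2n}$ of Theorem~5.1). The paper's computation, by contrast, is self-contained on $T^{\prime\ast}M$: it establishes $(2.8)\Leftrightarrow(2.11)$ using only Cartan data, so it holds for any complex Cartan space whether or not it arises as an $\mathcal{L}$-dual. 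Since the very definition of ``complex geodesic curve for $\mathcal{C}$'' in the paper presupposes the $\mathcal{L}$-dual setup, your shortcut is legitimate in context; but the paper's intrinsic computation has the advantage of showing that the equivalence between the two second-order systems $(2.8)$ and $(2.11)$ is a fact of Cartan geometry alone.
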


\begin{proof}  We suppose that $\sigma ^{\ast }(s)$ is a complex geodesic for $%
\mathcal{C}$, i.e., it satisfies (\ref{2.8}). Differentiating the equation $%
\frac{dz^{k}}{ds}=\zeta ^{k}$ with respect to $s,$ it results

$\frac{d^{2}z^{k}}{ds^{2}}=\frac{d\zeta ^{k}}{ds}=\frac{d(h^{\bar{m}k}\zeta
_{\bar{m}})}{ds}$

$=[\frac{\partial h^{\bar{m}k}}{\partial z^{l}}\frac{dz^{l}}{ds}+\frac{%
\partial h^{\bar{m}k}}{\partial \bar{z}^{r}}\frac{d\bar{z}^{r}}{ds}+(\,\dot{%
\partial}^{l}h^{\bar{m}k})\frac{d\zeta _{l}}{ds}+(\,\dot{\partial}^{\bar{r}%
}h^{\bar{m}k})\frac{d\zeta _{\bar{r}}}{ds}]\zeta _{\bar{m}}+h^{\bar{m}k}%
\frac{d\zeta _{\bar{m}}}{ds}$

$=[\frac{\partial h^{\bar{m}k}}{\partial z^{l}}\frac{dz^{l}}{ds}+\frac{%
\partial h^{\bar{m}k}}{\partial \bar{z}^{r}}\frac{d\bar{z}^{r}}{ds}+(\,\dot{%
\partial}^{l}h^{\bar{m}k})\frac{d\zeta _{l}}{ds}]\zeta _{\bar{m}}+h^{\bar{m}%
k}\frac{d\zeta _{\bar{m}}}{ds}$

$=[\frac{\partial h^{\bar{m}k}}{\partial z^{l}}\zeta ^{l}+(\,\dot{\partial}%
^{l}h^{\bar{m}k})\frac{d\zeta _{l}}{ds}]\zeta _{\bar{m}}+h^{\bar{m}k}(\frac{%
d\zeta _{\bar{m}}}{ds}-N_{\bar{m}\bar{r}}\frac{d\bar{z}^{r}}{ds})$

$=[\frac{\partial h^{\bar{m}k}}{\partial z^{l}}\zeta ^{l}+(\,\dot{\partial}%
^{l}h^{\bar{m}k})\frac{d\zeta _{l}}{ds}]\zeta _{\bar{m}}+h^{\bar{m}k}(\frac{%
d\zeta _{\bar{m}}}{ds}-N_{\bar{r}\bar{m}}\frac{d\bar{z}^{r}}{ds})$

$+h^{\bar{m}k}(N_{\bar{r}\bar{m}}-N_{\bar{m}\bar{r}})\frac{d\bar{z}^{r}}{ds}$

$=[\frac{\partial h^{\bar{m}k}}{\partial z^{l}}\zeta ^{l}+N_{jl}(\,\dot{%
\partial}^{j}h^{\bar{m}k})\zeta ^{l}-N_{jl}(\,\dot{\partial}^{j}h^{\bar{m}%
k})\zeta ^{l}+(\,\dot{\partial}^{l}h^{\bar{m}k})\frac{d\zeta _{l}}{ds}]\zeta
_{\bar{m}}$

$+h^{\bar{m}k}h_{\bar{r}\bar{m}}\Theta ^{\ast \bar{r}}+\Theta ^{\ast k}$

$=(\delta _{l}^{\ast }h^{\bar{m}k})\zeta ^{l}\zeta _{\bar{m}}+(\,\dot{%
\partial}^{l}h^{\bar{m}k})(\frac{d\zeta _{l}}{ds}-N_{lj}\zeta ^{j})\zeta _{%
\bar{m}}+h^{\bar{m}k}h_{\bar{r}\bar{m}}\Theta ^{\ast \bar{r}}+\Theta ^{\ast
k}$

$=-h^{\bar{m}p}h^{\bar{n}k}(\delta _{l}^{\ast }h_{p\bar{n}})\zeta ^{l}\zeta
_{\bar{m}}+(\,\dot{\partial}^{l}h^{\bar{m}k})(\frac{d\zeta _{l}}{ds}%
-N_{jl}\zeta ^{j})\zeta _{\bar{m}}$

$+(\,\dot{\partial}^{l}h^{\bar{m}k})(N_{jl}-N_{lj})\zeta ^{j}\zeta _{\bar{m}%
}+h^{\bar{m}k}h_{\bar{r}\bar{m}}\Theta ^{\ast \bar{r}}+\Theta ^{\ast k}$

$=-H_{pl}^{k}\zeta ^{p}\zeta ^{l}+h^{kl}h_{jl}\Theta ^{\ast j}+(h^{kl}h_{l%
\bar{r}}+h^{\bar{m}k}h_{\bar{r}\bar{m}})\Theta ^{\ast \bar{r}}+\Theta ^{\ast
k},$ which due to (\ref{2}) gives (\ref{2.11})

Conversely, we suppose that $\sigma ^{\ast }(s)$ is solution of (\ref{2.11}%
). As above calculus, we obtain%
\begin{eqnarray}
&&\frac{d^{2}z^{k}}{ds^{2}}+H_{pl}^{k}\zeta ^{p}\zeta ^{l}=h^{kl}(\frac{%
d\zeta _{l}}{ds}-N_{jl}\frac{dz^{j}}{ds})+h^{kl}h_{l\bar{r}}\Theta ^{\ast
\bar{r}}  \label{2.13} \\
&&+h^{\bar{m}k}(\frac{d\zeta _{\bar{m}}}{ds}-N_{\bar{r}\bar{m}}\frac{d\bar{z}%
^{r}}{ds})+\Theta ^{\ast k}.  \notag
\end{eqnarray}

Now, using (\ref{2.11}) and (\ref{2}), (\ref{2.13}) leads to%
\begin{equation}
h^{kl}(\frac{d\zeta _{l}}{ds}-N_{jl}\zeta ^{j}-h_{jl}\theta ^{\ast j})+h^{%
\bar{m}k}(\frac{d\zeta _{\bar{m}}}{ds}-N_{\bar{r}\bar{m}}\frac{d\bar{z}^{r}}{%
ds}-h_{\bar{r}\bar{m}}\Theta ^{\ast \bar{r}})=0.  \label{2.14}
\end{equation}

Denoting with $S_{l}:=\frac{d\zeta _{l}}{ds}-N_{jl}\zeta ^{j}-h_{jl}\theta
^{\ast j}$, the equation (\ref{2.14}) becomes%
\begin{equation}
h^{kl}S_{l}+h^{\bar{m}k}S_{\bar{m}}=0,  \label{2.15}
\end{equation}%
which yields $(h^{\bar{k}\bar{l}}h^{im}h_{i\bar{l}}-h^{\bar{k}m})S_{m}=0.$
Since $h^{\bar{k}\bar{l}}h^{im}h_{i\bar{l}}=0,$ then $h^{\bar{k}m}S_{m}=0,$
and so, $S_{m}=0$, i.e., $\sigma ^{\ast }(s)$ is a complex geodesic curve
for $\mathcal{C}$.
\end{proof}

\subsection{Projectively related complex Cartan metrics}

In \cite{Al-Mu2} we investigated the projectively related complex Finsler
metrics. Namely, the complex Finsler metrics $F$ and $\tilde{F}$ are called
projectively related if they have the same geodesic curves as point sets.
This means that for any complex geodesic curves: $\sigma _{1}=\sigma _{1}(s)$
of $(M,F),$ (given by (\ref{2.1}) or equivalently (\ref{2.2})), and $\sigma
_{2}=\sigma _{2}(\tilde{s})$ \ of $(M,\tilde{F}),$ (given by $\frac{%
d^{2}z^{k}}{d\tilde{s}^{2}}+2\tilde{G}^{k}(z(\tilde{s}),\frac{dz}{d\tilde{s}}%
)=\tilde{\Theta}^{k}(z(\tilde{s}),\frac{dz}{d\tilde{s}})),$ then $\sigma
_{1} $ and $\sigma _{2}$ must represent the same set of points. To achieve
this, we compare the above mentioned equations, making the same parameter $%
t. $ The equations of geodesic curve $\sigma _{1}$ in the parameter $t(s),$
with $\frac{dt}{ds}>0,$ are not preserved because it is transformed in%
\begin{equation}
\lbrack T^{k}(t)-\Theta ^{k}(t)]\left( \frac{dt}{ds}\right)
^{2}=T^{k}(s)-\Theta ^{k}(s)-\frac{dz^{k}}{dt}\frac{d^{2}t}{ds^{2}}=-\frac{%
dz^{k}}{dt}\frac{d^{2}t}{ds^{2}},  \label{5.1}
\end{equation}%
where $T^{k}(t):=\frac{d^{2}z^{k}}{dt^{2}}+N_{l}^{k}(t)\frac{dz^{l}}{dt},$
with $N_{l}^{k}(t):=N_{l}^{k}(z,\frac{dz}{dt})$ and $\Theta ^{k}(t):=\Theta
^{k}(z(t),\frac{dz}{dt})$, (for more details see \cite{Al-Mu2}).

Similar equations are obtained for $\sigma _{2}(t(\tilde{s})).$ Subtracting
these equations, we obtained that the spray coefficients of two projectively
related complex Finsler metrics are linked by $\tilde{G}^{k}=G^{k}+B^{k}+P%
\eta ^{k},$ where $B^{k}=\frac{1}{2}(\tilde{\Theta}^{k}-\Theta ^{k})$ and $%
P(z,\eta )$ is a smooth function on $\widetilde{T^{\prime }M}.$\medskip

Based on these, we introduce by $\mathcal{L}-$ duality the corespondent
notion on the $\mathcal{L}-$ dual complex Cartan spaces.

Let $M$ $\ $be the complex manifold with $F$ and $\tilde{F}$ projectively
related Finsler metrics. By (\ref{4.2}), we obtain two Cartan metrics $%
\mathcal{C}$ and $\mathcal{\tilde{C}},$ which are the images by $\mathcal{L}%
- $ duality of $F$ and $\tilde{F},$ respectively. \bigskip Having in mind
the notion of complex geodesic curve on $(M,\mathcal{\tilde{C}})$ as being a
$\mathcal{L}-$ dual of a complex geodesic curve on $(M,F),$ introduced in
the preview section, we give.

\begin{definition}
The complex Cartan metrics $\mathcal{C}$ and $\mathcal{\tilde{C}}$ on the
manifold $M$, which are the images by $\mathcal{L}-$ duality of $\ $the
complex Finsler metrics $F$ and $\tilde{F},$ respectively, are called
projectively related if they have the same complex geodesic curves as point
sets.
\end{definition}

Our next goal is to find the image by $\mathcal{L}-$ duality of the
equations (\ref{5.1}). For this, a similar calculus as in (\ref{2.5}), yields%
\begin{eqnarray*}
T^{k}(s) &=&-g^{\bar{n}k}[\mathring{N}_{\bar{n}\bar{r}}(s)\frac{d\bar{z}^{r}%
}{ds}+g_{\bar{r}\bar{n}}T^{\bar{r}}(s)-\frac{d\eta _{\bar{n}}}{ds}] \\
&=&-g^{\bar{n}k}[\mathring{N}_{\bar{n}\bar{r}}(t)\frac{d\bar{z}^{r}}{dt}+g_{%
\bar{r}\bar{n}}T^{\bar{r}}(t)-\frac{d\eta _{\bar{n}}}{dt}]\left( \frac{dt}{ds%
}\right) ^{2} \\
&=&T^{k}(t)\left( \frac{dt}{ds}\right) ^{2},
\end{eqnarray*}%
which together with (\ref{5.1}) leads to%
\begin{equation}
-g^{\bar{n}k}[\mathring{N}_{\bar{n}\bar{r}}(t)\frac{d\bar{z}^{r}}{dt}+g_{%
\bar{r}\bar{n}}T^{\bar{r}}(t)-\frac{d\eta _{\bar{n}}}{dt}]-\Theta ^{k}(t)=-%
\frac{dz^{k}}{dt}\frac{d^{2}t}{ds^{2}}\frac{1}{\left( \frac{dt}{ds}\right)
^{2}}.  \label{5.2}
\end{equation}%
Now, setting the image by $\mathcal{L}-$ duality of the equation (\ref{5.2}%
), it results the equations of the complex geodesic curve $\sigma _{1}^{\ast
}=\sigma _{1}^{\ast }(t(s))$ in the parameter $t$%
\begin{eqnarray*}
&&-h^{\bar{n}k}\{N_{\bar{n}\bar{r}}\frac{d\bar{z}^{r}}{dt}+h_{\bar{r}\bar{n}%
}[\Theta ^{\ast \bar{r}}(t)-\frac{d\bar{z}^{r}}{dt}\frac{d^{2}t}{ds^{2}}%
\frac{1}{\left( \frac{dt}{ds}\right) ^{2}}]-\frac{d\zeta _{\bar{n}}}{dt}%
\}-\Theta ^{\ast k}(t) \\
&=&-\frac{dz^{k}}{dt}\frac{d^{2}t}{ds^{2}}\frac{1}{\left( \frac{dt}{ds}%
\right) ^{2}}.
\end{eqnarray*}%
and $\frac{dz^{k}}{dt}=[\eta ^{k}(t)]^{\ast }=\zeta ^{k}(t),$ which are
equivalent with%
\begin{equation}
\frac{d\zeta _{\bar{n}}}{dt}-N_{\bar{r}\bar{n}}\frac{d\bar{z}^{r}}{dt}=h_{%
\bar{r}\bar{n}}\Theta ^{\ast \bar{r}}(t)-(h_{\bar{r}\bar{n}}\frac{d\bar{z}%
^{r}}{dt}+h_{k\bar{n}}\frac{dz^{k}}{dt})\frac{d^{2}t}{ds^{2}}\frac{1}{\left(
\frac{dt}{ds}\right) ^{2}}  \label{14}
\end{equation}%
and $\frac{dz^{k}}{dt}=\zeta ^{k}(t).$ Taking the conjugation of (\ref{14})
and then using $h_{jk}\eta ^{j}=h_{jk}\frac{dz^{j}}{dt}=0$ (from Lemma 5.2),
we obtain that the equations (\ref{5.2}) in parameter $t$ are%
\begin{equation}
\frac{d\zeta _{k}}{dt}-N_{jk}\zeta ^{j}(t)-h_{jk}\Theta ^{\ast j}(t)=-\zeta
_{k}\frac{d^{2}t}{ds^{2}}\frac{1}{\left( \frac{dt}{ds}\right) ^{2}},\;k=%
\overline{1,n},  \label{5.3}
\end{equation}%
and $\zeta ^{k}(t)=\frac{dz^{k}}{dt}.$

Note that, by the transformation of the parameter $t=t(s),$\ with $\frac{dt}{%
ds}>0$, the equations of (\ref{5.3})\textbf{\ }are not preserved.

Corresponding to the complex Cartan metric $\mathcal{\tilde{C}},$ on the
same manifold $M,$ we have the coefficients $\tilde{N}_{jk}$ of Chern-Cartan
(c.n.c.), $\tilde{\zeta}^{j}:=\frac{\partial \tilde{H}}{\partial \zeta _{k}}$
and the functions $\tilde{\Theta}^{\ast k}.$ Also, we suppose that $\sigma
_{2}^{\ast }=\sigma _{2}^{\ast }(\tilde{s})$ is a complex geodesic curve of $%
(M,\mathcal{\tilde{C}}),$ where $\tilde{s}$ is the parameter corresponding
to $\mathcal{\tilde{C}}$. Now, assuming that the same parameter $t$ is
transformed by $t=t(\tilde{s})$ as above, we obtain%
\begin{equation}
\frac{d\zeta _{k}}{dt}-\tilde{N}_{jk}\tilde{\zeta}^{j}(t)-\tilde{h}_{jk}%
\tilde{\Theta}^{\ast j}(t)=-\zeta _{k}\frac{d^{2}t}{d\tilde{s}^{2}}\frac{1}{%
\left( \frac{dt}{d\tilde{s}}\right) ^{2}},\;k=\overline{1,n}.  \label{5.4}
\end{equation}

If $\mathcal{C}$ and $\mathcal{\tilde{C}}$ are projectively related, then $%
\sigma _{1}^{\ast }$ and $\sigma _{2}^{\ast }$ represent the same set of
points and, the difference between corresponding equations from (\ref{5.3})
and (\ref{5.4}) gives%
\begin{eqnarray}
&&\tilde{N}_{jk}\tilde{\zeta}^{j}+\tilde{h}_{jk}\tilde{\Theta}^{\ast
j}-N_{jk}\zeta ^{j}-h_{jk}\Theta ^{\ast j}  \label{4.6} \\
&=&\zeta _{k}[\frac{d^{2}t}{d\tilde{s}^{2}}\frac{1}{\left( \frac{dt}{d\tilde{%
s}}\right) ^{2}}-\frac{d^{2}t}{ds^{2}}\frac{1}{\left( \frac{dt}{ds}\right)
^{2}}],\;k=\overline{1,n}.  \notag
\end{eqnarray}

With the notations: $\tilde{N}_{k}:=\tilde{N}_{jk}\tilde{\zeta}^{j}$ and $%
N_{k}:=N_{jk}\zeta ^{j},$ (\ref{4.6}) can be rewritten more generally as%
\begin{equation}
\tilde{N}_{k}+\tilde{h}_{jk}\tilde{\Theta}^{\ast j}=N_{k}+h_{jk}\Theta
^{\ast j}+Q\zeta _{k},\;k=\overline{1,n},  \label{4.7}
\end{equation}%
where $Q$ is a smooth function on $\widetilde{T^{\prime \ast }M}$, with
complex values.

Denoting by $B_{k}:=\tilde{h}_{jk}\tilde{\Theta}^{\ast j}-h_{jk}\Theta
^{\ast j},$ the homogeneity properties of the functions $h_{jk}$ and $\Theta
^{\ast j}$ give $(\dot{\partial}^{k}B_{i})\zeta _{k}=2B_{i}$ and $(\dot{%
\partial}^{\bar{k}}B_{i})\bar{\zeta}_{k}=0.$ Moreover, the relations (\ref%
{4.7}) become%
\begin{equation}
\tilde{N}_{k}=N_{k}+B_{k}+Q\zeta _{k}.  \label{5}
\end{equation}

Now, we use their homogeneity properties, going from $\zeta _{k}$ to $%
\lambda \zeta _{k}.$ Thus, differentiating in (\ref{5}) with respect to $%
\zeta _{k}$ and $\bar{\zeta}_{k}$ and then setting $\lambda =1$, we obtain%
\begin{equation}
B_{k}=-(\dot{\partial}^{r}Q)\zeta _{r}\zeta _{k}\;\;\;\mbox{and}\;\;B_{k}=[(%
\dot{\partial}^{\bar{r}}Q)\zeta _{\bar{r}}-Q]\zeta _{k}  \label{6}
\end{equation}%
and so,%
\begin{equation}
(\dot{\partial}^{r}Q)\zeta _{r}+(\dot{\partial}^{\bar{r}}Q)\zeta _{\bar{r}%
}=Q,  \label{7}
\end{equation}%
which means that $Q(z^{k},\mu \zeta _{k})=\mu Q(z^{k},\zeta _{k})$, for any $%
\mu \in \mathbf{R}$.

\begin{lemma}
Between the coefficients $\tilde{N}_{k}$ and $N_{k}$ corresponding to the
metrics $\mathcal{C}$ and $\mathcal{\tilde{C}}$ on the manifold $M$ there
are the relations $\tilde{N}_{k}=N_{k}+B_{k}+Q\zeta _{k},$ for any $k=%
\overline{1,n},$ where $Q$ is a smooth function on $\widetilde{T^{\prime
\ast }M}$ with complex values, if and only if $\tilde{N}_{k}=N_{k}+(\dot{%
\partial}^{\bar{r}}Q)\zeta _{\bar{r}}\zeta _{k}$, $B_{k}=-(\dot{\partial}%
^{r}Q)\zeta _{r}\zeta _{k}$, for any $k=\overline{1,n},$ and $(\dot{\partial}%
^{r}Q)\zeta _{r}+(\dot{\partial}^{\bar{r}}Q)\zeta _{\bar{r}}=Q$.
\end{lemma}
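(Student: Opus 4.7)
The plan is to base the proof on the homogeneity bookkeeping that the paper already assembled in the discussion preceding the lemma; the lemma is essentially an equivalence that packages equations (\ref{6})--(\ref{7}). The key observation is that the three ingredients of the identity $\tilde{N}_k=N_k+B_k+Q\zeta_k$ have prescribed homogeneity in $(\zeta_k,\zeta_{\bar k})$: $N_k$ and $\tilde N_k$ are $(1,1)$-homogeneous (since $N_{jk}$ is $(1,0)$-homogeneous and $\zeta^j=h^{\bar m j}\zeta_{\bar m}$ is $(0,1)$-homogeneous, noting that $h^{\bar m j}$ is $(0,0)$-homogeneous), $B_k=\tilde h_{jk}\tilde\Theta^{*j}-h_{jk}\Theta^{*j}$ is $(2,0)$-homogeneous (from $(1,-1)+(1,1)$, using Lemma 5.2 and the $(1,1)$-homogeneity of $\Theta^{*j}$ recorded after (\ref{2.4})), and $\zeta_k$ is $(1,0)$-homogeneous. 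So the two Euler operators $\zeta_r\dot\partial^r$ and $\zeta_{\bar r}\dot\partial^{\bar r}$ act diagonally on each summand with known eigenvalues.

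For the forward direction I would apply $\zeta_r\dot\partial^r$ to the assumed relation $\tilde N_k=N_k+B_k+Q\zeta_k$. The homogeneities above give $\tilde N_k = N_k+2B_k+[(\dot\partial^r Q)\zeta_r+Q]\zeta_k$, which after subtracting the original identity yields exactly $B_k=-(\dot\partial^r Q)\zeta_r\zeta_k$. Applying instead $\zeta_{\bar r}\dot\partial^{\bar r}$ gives $\tilde N_k=N_k+(\dot\partial^{\bar r}Q)\zeta_{\bar r}\zeta_k$, which is the second declared conclusion. Substituting both expressions back into the original identity and cancelling $\zeta_k$ (working generically on $\widetilde{T^{\prime*}M}$) produces the Euler-type relation $(\dot\partial^rQ)\zeta_r+(\dot\partial^{\bar r}Q)\zeta_{\bar r}=Q$.

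For the converse, I would simply run the substitution backwards: starting from $\tilde N_k=N_k+(\dot\partial^{\bar r}Q)\zeta_{\bar r}\zeta_k$, I use the homogeneity relation for $Q$ to replace $(\dot\partial^{\bar r}Q)\zeta_{\bar r}$ by $Q-(\dot\partial^rQ)\zeta_r$, then apply $B_k=-(\dot\partial^rQ)\zeta_r\zeta_k$ to rewrite the outcome as $N_k+B_k+Q\zeta_k$. No further computation is needed.

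The only delicate point I foresee is the rigorous justification that the identity holds \emph{pointwise} and not merely after contraction with $\zeta_k$; the cancellation of the common factor $\zeta_k$ in the last step of the forward direction is legitimate because the relation is required to hold for each $k$ and for all $(z,\zeta)\in \widetilde{T^{\prime*}M}$, so the scalar factor $(\dot\partial^rQ)\zeta_r+(\dot\partial^{\bar r}Q)\zeta_{\bar r}-Q$ must vanish identically. Once this is acknowledged, the lemma reduces to bookkeeping about Euler operators and the proof is essentially a two-line application of the $(1,1)$-, $(2,0)$- and $(1,0)$-homogeneities.
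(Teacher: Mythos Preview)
Your proposal is correct and follows essentially the same route as the paper: the discussion preceding the lemma (equations (\ref{6})--(\ref{7})) is obtained exactly by exploiting the $(1,1)$-, $(2,0)$- and $(1,0)$-homogeneities of $\tilde N_k,N_k$, $B_k$ and $\zeta_k$, which amounts to applying the Euler operators $\zeta_r\dot\partial^r$ and $\zeta_{\bar r}\dot\partial^{\bar r}$ to (\ref{5}) just as you do; the converse is likewise a direct substitution. The only cosmetic difference is that the paper phrases the Euler step as ``going from $\zeta_k$ to $\lambda\zeta_k$ and differentiating'', whereas you apply the radial vector fields directly.
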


From above considerations, we obtain.

\begin{lemma}
If the complex Cartan metrics $\mathcal{C}$ and $\mathcal{\tilde{C}}$ on the
manifold $M$ are projectively related, then there is a smooth function $Q$
on $\widetilde{T^{\prime \ast }M}$ with complex values, satisfying $(\dot{%
\partial}^{r}Q)\zeta _{r}+(\dot{\partial}^{\bar{r}}Q)\zeta _{\bar{r}}=Q,$
such that%
\begin{equation}
\tilde{N}_{k}=N_{k}+(\dot{\partial}^{\bar{r}}Q)\zeta _{\bar{r}}\zeta _{k}\;%
\mbox{and}\;B_{k}=-(\dot{\partial}^{r}Q)\zeta _{r}\zeta _{k}\;;\;k=\overline{%
1,n}.  \label{8}
\end{equation}
\end{lemma}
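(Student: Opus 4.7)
The plan is to reduce the claim to a direct combination of the derivation carried out immediately above the statement together with Lemma 5.3. First I would invoke projective relatedness: since $\sigma_1^\ast$ and $\sigma_2^\ast$ describe the same point set, the equations (5.3) and (5.4) written in a common parameter $t$ must coincide after subtraction; reading off the coefficient of $\zeta_k$ and collecting the $t$-dependent scalar factor into a single function $Q$ yields the master projective change (4.7), i.e.
\[
\tilde{N}_k = N_k + B_k + Q\zeta_k,\qquad B_k := \tilde{h}_{jk}\tilde{\Theta}^{\ast j} - h_{jk}\Theta^{\ast j},
\]
for some smooth complex-valued function $Q$ on $\widetilde{T^{\prime \ast}M}$. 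This already produces the existence of $Q$; what remains is to translate this into the two formulas in (8) and the homogeneity condition.

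Next I would extract the homogeneity constraint on $Q$ together with the closed-form expressions in (8). The key inputs are the homogeneity degrees of the known terms: $N_k$ and $\tilde{N}_k$ are $(1,0)$-homogeneous in $\zeta$, while the identities $(\dot{\partial}^r B_k)\zeta_r = 2B_k$ and $(\dot{\partial}^{\bar{r}} B_k)\zeta_{\bar{r}} = 0$ for $B_k$ follow from the homogeneity of $h_{jk}$ (Lemma 5.2) and of $\Theta^{\ast k}$. Replacing $\zeta_k$ by $\lambda\zeta_k$ in the projective change, differentiating once with respect to $\zeta_r$ and once with respect to $\bar{\zeta}_r$, and then setting $\lambda = 1$ produces both equations in (6), namely $B_k = -(\dot{\partial}^r Q)\zeta_r \zeta_k$ and $B_k = [(\dot{\partial}^{\bar{r}} Q)\zeta_{\bar{r}} - Q]\zeta_k$; subtracting them yields the $\mathbb{R}$-homogeneity identity $(\dot{\partial}^r Q)\zeta_r + (\dot{\partial}^{\bar{r}} Q)\zeta_{\bar{r}} = Q$ recorded as (7). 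Substituting the second expression for $B_k$ back into $\tilde{N}_k = N_k + B_k + Q\zeta_k$ gives $\tilde{N}_k = N_k + (\dot{\partial}^{\bar{r}} Q)\zeta_{\bar{r}}\zeta_k$, which is the first formula in (8); the second formula is the first relation of (6).

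Taken together, these three identities are exactly the content of Lemma 5.3 applied to the projective change extracted from projective relatedness, so the conclusion of Lemma 5.4 follows. I do not expect any serious obstacle: the proof is essentially an assembly of material that has already been established. The only delicate point is the bookkeeping of $(p,q)$-homogeneity degrees when differentiating in $\zeta_r$ and $\bar{\zeta}_r$, and the observation that $Q$ must be $\mathbb{R}$-homogeneous of degree one in $\zeta$ rather than $\mathbb{C}$-homogeneous, which is why both $\zeta$- and $\bar{\zeta}$-derivatives of $Q$ enter symmetrically in (7) and (8).
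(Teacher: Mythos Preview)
Your proposal is correct and follows essentially the same route as the paper: the paper's own ``proof'' of this lemma is literally the phrase ``From above considerations, we obtain,'' referring to the derivation of (4.6)--(4.7), the relation (5), the homogeneity computations (6)--(7), and Lemma 5.3, which is exactly what you have assembled. Your outline is in fact more explicit than the paper about why substituting the second relation of (6) back into (5) yields the first formula of (8).
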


Conversely, under assumption that $\sigma ^{\ast }=\sigma ^{\ast }(s)$ is a
complex geodesic curve of $(M,\mathcal{C}),$ we show that the complex Cartan
metric $\mathcal{\tilde{C}}$ with the coefficients $\tilde{N}_{jk}$ of
Chern-Cartan (c.n.c.) and the functions $\tilde{\zeta}^{k}=\frac{\partial
\tilde{H}}{\partial \zeta _{k}},$ given by (\ref{4.7}) is projectively
related to $\mathcal{C}$, where $Q$ is a smooth function on $\widetilde{%
T^{\prime \ast }M}$ with complex values. This means that there is a
parametrization $\tilde{s}=\tilde{s}(s),$ with $\frac{d\tilde{s}}{ds}>0,$
such that $\sigma ^{\ast }=\sigma ^{\ast }(\tilde{s}(s))$ is a geodesic of $%
(M,\mathcal{\tilde{C}}).$

If there is a parametrization $\tilde{s}=\tilde{s}(s),$ then it yields

$\frac{d\zeta _{k}}{d\tilde{s}}-N_{rk}\zeta ^{r}(\tilde{s})-h_{rk}\Theta
^{\ast r}(\tilde{s})=-\zeta _{k}\frac{d^{2}\tilde{s}}{ds^{2}}\frac{1}{\left(
\frac{d\tilde{s}}{ds}\right) ^{2}},\;k=\overline{1,n}.$

Now, using (\ref{4.7}), it results%
\begin{equation*}
\frac{d\zeta _{k}}{d\tilde{s}}=\tilde{N}_{k}(\tilde{s})+\tilde{h}_{rk}\tilde{%
\Theta}^{\ast r}(\tilde{s})+[Q(\tilde{s})-\frac{d^{2}\tilde{s}}{ds^{2}}\frac{%
1}{\left( \frac{d\tilde{s}}{ds}\right) ^{2}}]\zeta _{k}\;;\;k=\overline{1,n}.
\end{equation*}%
So, $\sigma ^{\ast }=\sigma ^{\ast }(\tilde{s}(s))$ is a geodesic of $(M,%
\mathcal{\tilde{C}})$ if and only if%
\begin{equation}
\lbrack Q(\tilde{s})-\frac{d^{2}\tilde{s}}{ds^{2}}\frac{1}{\left( \frac{d%
\tilde{s}}{ds}\right) ^{2}}]\zeta _{k}=0\;;\;k=\overline{1,n}.  \label{9'}
\end{equation}%
Since $\zeta _{k}\neq 0$, it results $Q(\tilde{s})\left( \frac{d\tilde{s}}{ds%
}\right) ^{2}=\frac{d^{2}\tilde{s}}{ds^{2}}.$ Due to (\ref{7}), it leads to%
\begin{equation}
Q(s)\frac{d\tilde{s}}{ds}=\frac{d^{2}\tilde{s}}{ds^{2}}.  \label{10'}
\end{equation}%
Denoting by $u(s):=\frac{d\tilde{s}}{ds}$, we have $\frac{d^{2}\tilde{s}}{%
ds^{2}}=\frac{du}{ds}$ and so, $Q(s)u=\frac{du}{ds}.$ We obtain $u=ae^{\int
Q(s)ds}.$ From here, it results that there is%
\begin{equation*}
\tilde{s}(s)=a\int e^{\int Q(s)ds}ds+b,
\end{equation*}%
where $a,b$ are arbitrary constants.

Corroborating all above results we have proven.

\begin{theorem}
Let $\mathcal{C}$ and $\mathcal{\tilde{C}}$ be complex Cartan metrics on the
manifold $M$. Then $\mathcal{C}$ and $\mathcal{\tilde{C}}$ are projectively
related if and only if there is a smooth function $Q$ on $\widetilde{%
T^{\prime \ast }M}$ with complex values, such that%
\begin{equation}
\tilde{N}_{k}=N_{k}+B_{k}+Q\zeta _{k};\;k=\overline{1,n}.  \label{12}
\end{equation}
\end{theorem}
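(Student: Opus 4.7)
The proof proposal is essentially to glue together the two chains of reasoning already prepared in the paragraphs preceding the statement: the derivation of \eqref{4.6}--\eqref{5} from the projective condition on the geodesic sets, and the explicit reparametrization argument built from the ODE \eqref{10'}. I would make the bi-implication structure clean and explicit, and then show that the $Q$-function that appears in each direction really is the same object.

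For the forward implication, I would start from the definition: if $\mathcal{C}$ and $\mathcal{\tilde C}$ are projectively related, then for every complex geodesic curve $\sigma_1^{\ast}=\sigma_1^{\ast}(s)$ of $(M,\mathcal{C})$ and every $\sigma_2^{\ast}=\sigma_2^{\ast}(\tilde s)$ of $(M,\mathcal{\tilde C})$ which trace the same point set, one can pick a common parameter $t$, write both curves in the form \eqref{5.3} and \eqref{5.4}, and subtract. This produces \eqref{4.6}, and after the legitimate abbreviation $\tilde N_k:=\tilde N_{jk}\tilde\zeta^{j}$, $N_k:=N_{jk}\zeta^{j}$, $B_k:=\tilde h_{jk}\tilde\Theta^{\ast j}-h_{jk}\Theta^{\ast j}$, the projective relation \eqref{4.7} emerges; identifying the bracketed factor on the right of \eqref{4.6} with a single smooth function $Q$ on $\widetilde{T'^{\ast}M}$ gives precisely \eqref{12}. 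I would note that $Q$ is forced to satisfy the $(1,0)$-homogeneity-like identity $(\dot\partial^{r}Q)\zeta_{r}+(\dot\partial^{\bar r}Q)\zeta_{\bar r}=Q$ from \eqref{7}, an automatic consequence of the homogeneity properties of $h_{jk}$, $\Theta^{\ast j}$ and of $N_k,\tilde N_k$ via Lemmas 5.2 and 5.3; these two lemmas are the real technical backbone I would be invoking here.

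For the converse, assume \eqref{12} holds with $Q$ a smooth $\mathbf{R}$-homogeneous (in the sense of \eqref{7}) complex-valued function. Let $\sigma^{\ast}(s)$ be a complex geodesic curve of $(M,\mathcal{C})$, so that it satisfies \eqref{5.3} in the parameter $s$. The plan is to look for a reparametrization $\tilde s=\tilde s(s)$ with $d\tilde s/ds>0$ so that $\sigma^{\ast}(\tilde s(s))$ satisfies the corresponding $\mathcal{\tilde C}$-version of \eqref{5.3}. Rewriting the $\mathcal{\tilde C}$-geodesic equation using \eqref{12} and comparing it with the $\mathcal{C}$-equation for $\sigma^{\ast}(s)$, the requirement collapses to the scalar condition \eqref{9'}, which — since $\zeta_{k}$ is not identically zero and $Q$ is $\mathbf{R}$-homogeneous — reduces to the ODE \eqref{10'}, namely $du/ds=Q(s)\,u$ for $u=d\tilde s/ds$. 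This is a first-order linear ODE solvable by quadratures, giving $\tilde s(s)=a\int e^{\int Q(s)\,ds}\,ds+b$ with $a>0$; this is exactly the reparametrization we need.

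The only non-routine step I anticipate is verifying that the $Q$ appearing on the two sides of the equivalence is genuinely the same object, i.e.\ that the homogeneity constraint \eqref{7} is both automatic in the forward direction and enough for the ODE \eqref{10'} to be well-posed in the backward direction. Here I would lean on Lemma 5.3 (which ensures $\tilde N_k$, $N_k$, $B_k$ all scale properly in $\zeta$) to deduce that the right-hand side of \eqref{4.6} must land in the subspace of functions proportional to $\zeta_k$, and then on the explicit differentiation step \eqref{6}--\eqref{7} to pin down the homogeneity of $Q$. Once this is in place, both implications follow, and the theorem is established.
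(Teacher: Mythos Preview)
Your proposal is correct and follows essentially the same approach as the paper: the paper's proof is literally the phrase ``Corroborating all above results we have proven,'' meaning the forward direction is the derivation (\ref{5.3})--(\ref{5.4}) $\Rightarrow$ (\ref{4.6}) $\Rightarrow$ (\ref{4.7}) and the converse is the reparametrization argument culminating in the ODE (\ref{10'}) and its explicit solution $\tilde s(s)=a\int e^{\int Q(s)\,ds}\,ds+b$. Your identification of Lemma~5.3 and the homogeneity relation (\ref{7}) as the hinge connecting the two directions is exactly how the paper uses them.
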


As a consequence of Lemma 5.3 we have the following.

\begin{corollary}
Let $\mathcal{C}$ and $\mathcal{\tilde{C}}$ be the complex Cartan metrics on
the manifold $M$. $\mathcal{C}$ and $\mathcal{\tilde{C}}$ are projectively
related if and only if there is a smooth function $Q$ on $\widetilde{%
T^{\prime \ast }M}$ with complex values, such that $\tilde{N}_{k}=N_{k}+(%
\dot{\partial}^{\bar{r}}Q)\zeta _{\bar{r}}\zeta _{k}$, $B_{k}=-(\dot{\partial%
}^{r}Q)\zeta _{r}\zeta _{k}$, for any $k=\overline{1,n},$ and $(\dot{\partial%
}^{r}Q)\zeta _{r}+(\dot{\partial}^{\bar{r}}Q)\zeta _{\bar{r}}=Q$.
\end{corollary}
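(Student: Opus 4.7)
The plan is to derive Corollary 5.3 as an immediate combination of Theorem 5.5 and Lemma 5.3. Theorem 5.5 already establishes that $\mathcal{C}$ and $\mathcal{\tilde{C}}$ are projectively related if and only if there is a smooth, complex-valued function $Q$ on $\widetilde{T^{\prime \ast}M}$ such that $\tilde{N}_{k}=N_{k}+B_{k}+Q\zeta _{k}$ for every $k=\overline{1,n}$. The corollary only refines this single relation into three simultaneous conditions, so essentially all the work is reorganisation: the forward direction will extract the three conditions from the single equation, and the converse will reassemble them.

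For the forward implication, I would start from $\tilde{N}_{k}=N_{k}+B_{k}+Q\zeta _{k}$ and exploit the $(1,0)$-homogeneity of $\tilde{N}_k$, $N_k$ and $B_k$ with respect to $\zeta$, which is already recorded just above (\ref{5}) where one notes $(\dot{\partial}^{k}B_{i})\zeta _{k}=2B_{i}$ and $(\dot{\partial}^{\bar{k}}B_{i})\bar{\zeta}_{k}=0$. Applying $\dot{\partial}^{r}$ and $\dot{\partial}^{\bar{r}}$ to the relation and contracting with $\zeta_r$, $\zeta_{\bar r}$ respectively — exactly the computation carried out in (\ref{6})--(\ref{7}) — I recover $B_{k}=-(\dot{\partial}^{r}Q)\zeta _{r}\zeta _{k}$, $\tilde{N}_{k}-N_{k}=(\dot{\partial}^{\bar{r}}Q)\zeta _{\bar{r}}\zeta _{k}$, and the homogeneity constraint $(\dot{\partial}^{r}Q)\zeta _{r}+(\dot{\partial}^{\bar{r}}Q)\zeta _{\bar{r}}=Q$. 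These are precisely the three conditions of Corollary 5.3, and Lemma 5.3 has already packaged this step as a stand-alone biconditional.

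For the converse, I would start from the three relations and run the argument in reverse: summing $\tilde{N}_{k}=N_{k}+(\dot{\partial}^{\bar{r}}Q)\zeta _{\bar{r}}\zeta _{k}$ with $B_{k}=-(\dot{\partial}^{r}Q)\zeta _{r}\zeta _{k}$ yields $\tilde{N}_{k}-B_{k}-N_{k}=[(\dot{\partial}^{\bar{r}}Q)\zeta _{\bar{r}}+(\dot{\partial}^{r}Q)\zeta _{r}]\zeta _{k}=Q\zeta _{k}$, where the last equality uses the homogeneity identity. Thus $\tilde{N}_{k}=N_{k}+B_{k}+Q\zeta _{k}$, and Theorem 5.5 furnishes projective relatedness of $\mathcal{C}$ and $\mathcal{\tilde{C}}$.

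There is essentially no genuine obstacle here, since the computational core is already absorbed in Theorem 5.5 and Lemma 5.3. The only mild care point is to ensure that smoothness of $Q$ on $\widetilde{T^{\prime \ast}M}$ and the $\mathbf{R}$-homogeneity $Q(z^{k},\mu \zeta _{k})=\mu Q(z^{k},\zeta _{k})$ — which Lemma 5.3 embeds as the compatibility condition $(\dot{\partial}^{r}Q)\zeta _{r}+(\dot{\partial}^{\bar{r}}Q)\zeta _{\bar{r}}=Q$ — are preserved in both directions. Both are automatic once the statements of the two supporting results are quoted, so the proof reduces to a one-line citation of Theorem 5.5 followed by an application of Lemma 5.3.
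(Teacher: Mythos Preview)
Your proposal is correct and matches the paper's approach exactly: the paper presents this corollary with no explicit proof, introducing it simply as ``a consequence of Lemma 5.3'' (combined implicitly with Theorem 5.5), which is precisely the one-line citation you arrive at. Your unpacking of the forward and converse directions is just an expansion of what Lemma 5.3 already encodes, so there is nothing to add.
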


The relations (\ref{12}) between the functions $\tilde{N}_{k}$ and $N_{k}$
of the projectively related complex Cartan metrics $\mathcal{C}$ and $%
\mathcal{\tilde{C}}$ will be called \textit{projective change.}

\begin{theorem}
Let $\mathcal{C}$ and $\mathcal{\tilde{C}}$ be the complex Cartan metrics on
the manifold $M,$ which are projectively related. If either $\mathcal{C}$ or
$\mathcal{\tilde{C}}$ is weakly K\"{a}hler-Cartan then, $B_{k}=0$ and the
projective change is $\tilde{N}_{k}=N_{k}+Q\zeta _{k}$, where $Q$ is a $%
(0,1) $ - homogeneous function.
\end{theorem}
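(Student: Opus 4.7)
The plan is to combine the two expressions already derived for $B_k$: by Theorem 5.5, the projective change reads $\tilde{N}_k = N_k + B_k + Q\zeta_k$ with $B_k := \tilde{h}_{jk}\tilde{\Theta}^{\ast j} - h_{jk}\Theta^{\ast j}$, while Corollary 5.2 simultaneously gives $B_k = -(\dot{\partial}^r Q)\zeta_r \, \zeta_k$. I will use the weakly K\"ahler-Cartan hypothesis to kill one of the two $\Theta^{\ast}$-terms, then contract the resulting identity with an appropriate raised covector so as to exploit the relation $h_{jk}\zeta^k = 0$ of Lemma 5.2.

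Suppose first that $\mathcal{C}$ is weakly K\"ahler-Cartan, so that $\Theta^{\ast j} = 0$ (recall from the remark preceding Theorem 5.3 that weakly K\"ahler is equivalent to $\Theta^{\ast k} = 0$). The two expressions for $B_k$ then give
\begin{equation*}
\tilde{h}_{jk}\tilde{\Theta}^{\ast j} = -(\dot{\partial}^r Q)\zeta_r \, \zeta_k.
\end{equation*}
Contracting both sides with $\tilde{\zeta}^k = \tilde{h}^{\bar m k}\zeta_{\bar m}$ makes the left-hand side vanish by Lemma 5.2 applied to $\tilde{\mathcal{C}}$ (namely $\tilde{h}_{jk}\tilde{\zeta}^k = 0$), whereas on the right-hand side $\zeta_k\tilde{\zeta}^k = \tilde{h}^{\bar m k}\zeta_k\zeta_{\bar m} = \tilde{H}$. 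Since $\tilde{H} > 0$ on $\widetilde{T^{\prime \ast}M}$, I conclude $(\dot{\partial}^r Q)\zeta_r = 0$, and hence $B_k = 0$. The case where $\tilde{\mathcal{C}}$ is weakly K\"ahler-Cartan is perfectly symmetric: $\tilde{\Theta}^{\ast j}=0$, and the same argument with $\zeta^k$ in place of $\tilde{\zeta}^k$ and $H$ in place of $\tilde{H}$ again yields $(\dot{\partial}^r Q)\zeta_r = 0$.

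Once $(\dot{\partial}^r Q)\zeta_r = 0$ is in hand, $B_k = 0$ and the projective change reduces to $\tilde{N}_k = N_k + Q\zeta_k$. Substituting into the constraint $(\dot{\partial}^r Q)\zeta_r + (\dot{\partial}^{\bar r}Q)\zeta_{\bar r} = Q$ from Corollary 5.2 leaves $(\dot{\partial}^{\bar r}Q)\zeta_{\bar r} = Q$, which is Euler's identity expressing that $Q$ is $(0,1)$-homogeneous in $\zeta$. I do not anticipate a genuine obstacle here: the whole argument turns on recognizing that the identity $h_{jk}\zeta^k = 0$ from Lemma 5.2, coming from the nonholonomic vertical frame of Section 5.1, is precisely the mechanism that forces the $\Theta^{\ast}$-contribution to collapse under the correct contraction.
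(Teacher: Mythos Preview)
Your argument is correct and follows essentially the same route as the paper: reduce $B_k$ to $\tilde h_{jk}\tilde\Theta^{\ast j}$ (or $-h_{jk}\Theta^{\ast j}$) via the weakly K\"ahler hypothesis, contract with $\tilde\zeta^k$ (resp.\ $\zeta^k$) and invoke $\tilde h_{jk}\tilde\zeta^k=0$ from Lemma~5.2 together with $\zeta_k\tilde\zeta^k=\tilde H>0$ to force $(\dot\partial^r Q)\zeta_r=0$, then read off the $(0,1)$-homogeneity of $Q$ from the remaining Euler relation. Your write-up is in fact more complete than the paper's (you spell out the symmetric case and the homogeneity step); just note that what you cite as ``Corollary~5.2'' is Corollary~5.1 in the paper's numbering.
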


\begin{proof}  We assume that $\tilde{N}_{k}=N_{k}+(\dot{\partial}^{\bar{r}}Q)\zeta
_{\bar{r}}\zeta _{k}$, $B_{k}=-(\dot{\partial}^{r}Q)\zeta _{r}\zeta _{k}$
and $(\dot{\partial}^{r}Q)\zeta _{r}+(\dot{\partial}^{\bar{r}}Q)\zeta _{\bar{%
r}}=Q$.

If $\mathcal{C}$ is weakly K\"{a}hler then $\Theta ^{\ast s}=0$ and so, $%
\tilde{h}_{sk}\tilde{\Theta}^{\ast s}$ $=-(\dot{\partial}^{r}Q)\zeta
_{r}\zeta _{k},$ which contracted by $\tilde{\zeta}^{k},$ implies $(\dot{%
\partial}^{r}Q)\zeta _{r}=0.$ This leads to $B_{k}=\tilde{h}_{sk}\tilde{%
\Theta}^{\ast s}=0$.
\end{proof}

\begin{theorem}
Let $\mathcal{C}$ be complex Euclidean metric on a domain $D$ from $\mathbf{C%
}^{n}$ and $\mathcal{\tilde{C}}$ another complex Cartan metric on $D$. Then $%
\mathcal{C}$ and $\mathcal{\tilde{C}}$ are projectively related if and only
if $\tilde{N}_{k}=Q\zeta _{k}$ and $\tilde{h}_{sk}\tilde{\Theta}^{\ast s}=0,$%
where $Q=-\frac{1}{\tilde{H}}\frac{\partial ^{\ast }\tilde{H}}{\partial z^{i}%
}\tilde{\zeta}^{i}.$
\end{theorem}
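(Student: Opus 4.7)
My plan is to reduce the statement to Theorem 5.5 and Corollary 5.2 by computing everything explicitly for the complex Euclidean metric, and then to identify the function $Q$ by a single contraction.

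First I would record what Euclidean forces. For $\mathcal{C}$ Euclidean on $D\subset\mathbf{C}^{n}$ the fundamental tensor $h^{\bar{j}i}=\delta^{ji}$ is constant, so from $N_{ji}=-h_{j\bar{k}}\frac{\partial^{\ast}h^{\bar{k}l}}{\partial z^{i}}\zeta_{l}$ we get $N_{jk}\equiv 0$, hence $N_{k}=N_{jk}\zeta^{j}=0$; Euclidean is also (weakly) K\"{a}hler--Cartan, so $\Theta^{\ast k}=0$. Since additionally $H$ is bilinear in $(\zeta,\bar\zeta)$, $h_{sk}=\frac{\partial^{2}H}{\partial\zeta^{s}\partial\zeta^{k}}=0$. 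Thus the quantity $B_{k}$ appearing in Theorem 5.5 collapses to $B_{k}=\tilde{h}_{sk}\tilde{\Theta}^{\ast s}$, and Theorem 5.5 reads: $\mathcal{C}$ and $\mathcal{\tilde{C}}$ are projectively related iff there exists $Q$ with
\begin{equation*}
\tilde{N}_{k}=\tilde{h}_{sk}\tilde{\Theta}^{\ast s}+Q\zeta_{k}.
\end{equation*}

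Next, for the forward direction I would show that the two summands on the right must vanish separately. Corollary 5.2 tells us $B_{k}=-(\dot{\partial}^{r}Q)\zeta_{r}\zeta_{k}$, that is $\tilde{h}_{sk}\tilde{\Theta}^{\ast s}=-(\dot{\partial}^{r}Q)\zeta_{r}\zeta_{k}$. I would contract with $\tilde{\zeta}^{k}$ and invoke Lemma 5.2 applied to $\mathcal{\tilde{C}}$, which gives $\tilde{h}_{sk}\tilde{\zeta}^{k}=0$; since $\zeta_{k}\tilde{\zeta}^{k}=\tilde{H}\neq 0$, this forces $(\dot{\partial}^{r}Q)\zeta_{r}=0$, hence $B_{k}=\tilde{h}_{sk}\tilde{\Theta}^{\ast s}=0$ and $\tilde{N}_{k}=Q\zeta_{k}$. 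To pin down $Q$, I would contract $\tilde{N}_{k}=Q\zeta_{k}$ with $\tilde{\zeta}^{k}$. On the right this yields $Q\tilde{H}$; on the left, I would use the key algebraic identity
\begin{equation*}
\tilde{N}_{jk}\tilde{\zeta}^{j}=-\frac{\partial^{\ast}\tilde{H}}{\partial z^{k}},
\end{equation*}
which follows in one line by writing $\tilde{N}_{ji}=-\tilde{h}_{j\bar{k}}\frac{\partial^{\ast}\tilde{h}^{\bar{k}l}}{\partial z^{i}}\zeta_{l}$, contracting with $\tilde{\zeta}^{j}=\tilde{h}^{\bar{m}j}\zeta_{\bar{m}}$, and recognizing $\frac{\partial^{\ast}\tilde{H}}{\partial z^{i}}=\frac{\partial^{\ast}\tilde{h}^{\bar{k}l}}{\partial z^{i}}\zeta_{\bar{k}}\zeta_{l}$. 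Then $\tilde{N}_{k}\tilde{\zeta}^{k}=-\frac{\partial^{\ast}\tilde{H}}{\partial z^{k}}\tilde{\zeta}^{k}$, giving the asserted formula $Q=-\frac{1}{\tilde{H}}\frac{\partial^{\ast}\tilde{H}}{\partial z^{i}}\tilde{\zeta}^{i}$.

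For the converse I would simply observe that if $\tilde{N}_{k}=Q\zeta_{k}$ and $\tilde{h}_{sk}\tilde{\Theta}^{\ast s}=0$ with $Q$ as stated, then $\tilde{N}_{k}=N_{k}+B_{k}+Q\zeta_{k}$ trivially holds (using $N_{k}=0$, $B_{k}=0$), so Theorem 5.5 delivers projective equivalence, provided $Q$ satisfies the compatibility $(\dot{\partial}^{r}Q)\zeta_{r}+(\dot{\partial}^{\bar{r}}Q)\zeta_{\bar{r}}=Q$ of Corollary 5.2. A quick homogeneity count ($\tilde{H}$ is $(1,1)$, $\partial^{\ast}\tilde{H}/\partial z^{i}$ is $(1,1)$, $\tilde{\zeta}^{i}$ is $(0,1)$) shows $Q$ is $(0,1)$-homogeneous; differentiating the homogeneity relation $Q(z,\lambda\zeta)=\bar\lambda Q(z,\zeta)$ with respect to $\lambda$ and $\bar\lambda$ at $\lambda=1$ yields both $(\dot{\partial}^{r}Q)\zeta_{r}=0$ and $(\dot{\partial}^{\bar{r}}Q)\zeta_{\bar{r}}=Q$, which together give the required identity. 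The main obstacle I anticipate is cleanly deriving the contraction identity $\tilde{N}_{jk}\tilde{\zeta}^{j}=-\partial_{k}^{\ast}\tilde{H}$; once that is in hand, the two implications reduce to bookkeeping with homogeneity.
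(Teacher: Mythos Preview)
Your proposal is correct and follows essentially the same route as the paper. The only difference is organizational: the paper invokes Theorem~5.6 (whose proof already contains the contraction $\tilde h_{sk}\tilde\Theta^{\ast s}\tilde\zeta^{k}=0$ forcing $B_{k}=0$) together with $N_{k}=0$ and $\Theta^{\ast s}=0$ for the Euclidean metric, then contracts $\tilde N_{k}=Q\zeta_{k}$ by $\tilde\zeta^{k}$ to identify $Q$; you instead go directly through Theorem~5.5 and Corollary~5.2, reproducing the contraction argument that underlies Theorem~5.6, and you make the converse (which the paper just calls ``obvious'') explicit by checking the $(0,1)$--homogeneity of $Q$.
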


\begin{proof}
We suppose that $\mathcal{C}$ and $\mathcal{\tilde{C}}$ are projectively
related. But, the complex Euclidean metric $\mathcal{C}:=|\zeta |^{2}={\sum }%
_{k=1}^{n}\zeta _{k}\zeta _{\bar{k}}$ is K\"{a}hler, (it has $\Theta ^{\ast
s}=0)$ with $N_{k}=0$. By these assumptions and taking into account Theorem
5.6 it results $\tilde{N}_{k}=Q\zeta _{k}$ and $\tilde{h}_{sk}\tilde{\Theta}%
^{\ast s}=0.$ Further on, contracting with $\tilde{\zeta}^{k}$ the relation $%
\tilde{N}_{k}=Q\zeta _{k}$ we obtain $Q=-\frac{1}{\tilde{H}}\frac{\partial
^{\ast }\tilde{H}}{\partial z^{i}}\tilde{\zeta}^{i}.$ The converse is
obvious.
\end{proof}
This section is completed with two applications.

We first wish to find if a complex Cartan-Randers metric $\mathcal{\tilde{C}}%
=\alpha +|\beta |$ can be obtained by $\mathcal{L}-$ duality from a complex
Finsler metric. We do not expect the metric\textbf{\ }$\mathcal{\tilde{C}}%
=\alpha +|\beta |$\textbf{\ }comes from a complex\textbf{\ }Finsler-Randers%
\textbf{\ }metric.\textbf{\ }Below, we show a degree more.\textbf{\ }The
existence of the nonholonomic vertical frame $\{\frac{\partial }{\partial
\zeta ^{k}},\frac{\partial }{\partial \bar{\zeta}^{k}}\}$ is conditioned by
the relations (\ref{2}). Thus, we can prove the following result.

\begin{theorem}
Let $(M,\mathcal{\tilde{C}})$ be a complex Cartan-Randers space. Then, $%
\mathcal{\tilde{C}}$ is image by $\mathcal{L}-$ duality of a complex Finsler
metric on the same manifold $M$ if and only if $\mathcal{\tilde{C}}$ is a
purely Hermitian complex Cartan-Randers metric.
\end{theorem}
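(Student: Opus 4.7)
The plan is to read off the $\mathcal{L}$-duality criterion from Theorem~5.1 and Lemma~5.1: $\mathcal{\tilde{C}}$ arises as the $\mathcal{L}$-dual of a Finsler metric precisely when $\tilde{\mathcal{H}}_{1}\tilde{\mathcal{H}}_{2}=I_{2n}$, which (by the derivation of (\ref{2})) is equivalent to $A_{kr}\tilde{h}^{jr}=0$, and in turn to $\tilde{h}_{r\bar{j}}\tilde{h}^{\bar{j}\bar{m}}\tilde{h}^{sr}=0$. So the proof reduces to checking this identity for a Cartan-Randers metric and seeing which Cartan-Randers metrics satisfy it.

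For the easy direction ($\Leftarrow$), if $\mathcal{\tilde{C}}$ is a purely Hermitian Cartan-Randers metric, then (by the observation after Theorem~4.1) $\alpha^{2}\|b\|^{2}=|\beta|^{2}$, whence $\mathcal{\tilde{C}}^{2}=\alpha^{2}(1+\|b\|)^{2}$ and $\tilde{h}^{\bar{j}i}=(1+\|b\|)^{2}a^{\bar{j}i}(z)$ depends only on $z$. Then $\tilde{h}^{sr}=(\dot{\partial}^{r}\tilde{h}^{\bar{p}s})\zeta_{\bar{p}}=0$, so (\ref{2})--(\ref{3}) hold trivially with $A_{kr}=0$, the block matrix $\tilde{\mathcal{H}}_{1}$ becomes antidiagonal with inverse $\tilde{\mathcal{H}}_{2}$, and Theorem~5.1 gives the desired Finsler $\mathcal{L}$-predecessor.

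For the hard direction ($\Rightarrow$), I would work directly from formula (\ref{II.1''}). Applying $\dot{\partial}^{r}$ to each of the four summands in (\ref{II.1''}) and contracting by $\zeta_{\bar{p}}$ yields an explicit closed-form expression for $\tilde{h}^{ir}$, in which derivatives of the scalar factors $\mathcal{\tilde{C}}/\alpha$, $\mathcal{\tilde{C}}/|\beta|$ and $1/\mathcal{\tilde{C}}^{2}$ produce both polynomial terms (in $\zeta, b$) and terms with $|\beta|$ in the denominator. Imposing the vanishing condition $\tilde{h}_{r\bar{j}}\tilde{h}^{\bar{j}\bar{m}}\tilde{h}^{sr}=0$ and separating the result into its $|\beta|$-rational and $|\beta|$-irrational parts (exactly the trick used in the proof of Theorem~4.2), I would obtain two independent tensor identities. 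Contracting them in turn with $b_{i}$, $\zeta_{i}$, $b_{i}b_{r}$ and $\zeta_{i}\zeta_{r}$ (and their conjugates) should collapse the system to the single scalar equation $\alpha^{2}\|b\|^{2}=|\beta|^{2}$, which is precisely the purely Hermitian condition for a Cartan-Randers metric.

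The principal obstacle is this final algebraic step: the derivative of (\ref{II.1''}) mixes the four rank-two terms with several scalar derivatives, so the resulting expression for $\tilde{h}^{ir}$ is long, and one must carefully track which contributions are polynomial in $\zeta$ and which carry the radical $|\beta|$, so that the splitting into rational and irrational parts is unambiguous. Once that splitting is organized, the contractions above should close the argument without further analytic input.
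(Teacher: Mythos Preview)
Your plan is correct in outline and would succeed, but it takes a detour that the paper avoids. The paper's proof does not differentiate the metric tensor formula (\ref{II.1''}) at all: it computes $\tilde{h}^{ji}=\frac{\partial^{2}\mathcal{\tilde{C}}^{2}}{\partial\zeta_{j}\partial\zeta_{i}}$ directly from $\mathcal{\tilde{C}}^{2}=(\alpha+|\beta|)^{2}=\alpha^{2}+2\alpha|\beta|+|\beta|^{2}$. Since $\alpha^{2}$ and $|\beta|^{2}$ are separately linear in $\zeta$, only the cross term $2\alpha|\beta|$ contributes, and one line of calculus gives the rank-one expression
\[
\tilde{h}^{ji}=-\tfrac{\alpha|\beta|}{2}\Bigl(\tfrac{1}{\alpha^{2}}\zeta^{i}-\tfrac{\bar{\beta}}{|\beta|^{2}}b^{i}\Bigr)\Bigl(\tfrac{1}{\alpha^{2}}\zeta^{j}-\tfrac{\bar{\beta}}{|\beta|^{2}}b^{j}\Bigr).
\]
With this in hand, the second relation in (\ref{2}) produces $\tilde{h}_{ij}$ as another rank-one tensor (using the explicit inverse $\tilde{h}_{i\bar{j}}$ from Proposition~4.1), and the first relation $\tilde{h}_{kr}\tilde{h}^{jr}=0$ then collapses immediately to the single scalar identity $\alpha^{2}\|b\|^{2}=|\beta|^{2}$, with no need for the rational/irrational splitting or the battery of contractions you propose. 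Your route via $\tilde{h}^{ir}=(\dot{\partial}^{r}\tilde{h}^{\bar{p}i})\zeta_{\bar{p}}$ applied to the four summands of (\ref{II.1''}) would eventually reach the same rank-one tensor, but only after substantial cancellation; the direct second derivative of $\mathcal{\tilde{C}}^{2}$ is the shortcut that makes the argument a few lines long. Your treatment of the converse is essentially identical to the paper's.
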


\begin{proof}  Corresponding to the metric $\mathcal{\tilde{C}}=\alpha +|\beta |$,
by direct computation it results

$\tilde{h}^{ji}=\frac{\partial ^{2}\mathcal{\tilde{C}}^{2}}{\partial \zeta
_{j}\partial \zeta _{i}}=-\frac{\alpha |\beta |}{2}(\frac{1}{\alpha ^{2}}%
\zeta ^{i}-\frac{\bar{\beta}}{|\beta |^{2}}b^{i})(\frac{1}{\alpha ^{2}}\zeta
^{j}-\frac{\bar{\beta}}{|\beta |^{2}}b^{j}).$

Now, we suppose that $\mathcal{\tilde{C}}$ is image by $\mathcal{L}-$
duality of a complex Finsler metric. Then, $\mathcal{\tilde{C}}$ must
satisfy (\ref{2}). The second relation from (\ref{2}) leads to $\tilde{h}%
_{ij}=\frac{2\alpha |\beta |}{\gamma ^{2}\mathcal{\tilde{C}}^{2}}[(\alpha
||b||^{2}+|\beta |)\zeta _{i}-\frac{\alpha \mathcal{\tilde{C}}\beta }{|\beta
|}b_{i}][(\alpha ||b||^{2}+|\beta |)\zeta _{j}-\frac{\alpha \mathcal{\tilde{C%
}}\beta }{|\beta |}b_{j}].$ Thus, the first condition from (\ref{2}) gives $%
\alpha ^{2}||b||^{2}=|\beta |^{2},$ i.e., $\mathcal{\tilde{C}}$ is a purely
Hermitian metric.

The converse is obvious. Indeed, if $\mathcal{\tilde{C}}$ is a purely
Hermitian metric then $\tilde{h}^{ji}=\tilde{h}_{ij}=0$, which proves our
claim.
\end{proof}

Therefore, we can to discuss only the projectiveness of the purely Hermitian
complex Cartan-Randers metric $\mathcal{\tilde{C}}=(1+||b||)\alpha .$

Since $\mathcal{\tilde{C}}$ is a complex Berwald-Cartan metric, then $\tilde{%
N}_{ji}=\overset{a}{N_{ji}}$ which contracted with $\tilde{\zeta}%
^{j}:=(1+||b||)^{2}\zeta ^{j}$ implies%
\begin{equation}
\tilde{N}_{i}=\overset{a}{N_{i}}(1+||b||)^{2}.  \label{ii}
\end{equation}
So, we obtain.

\begin{corollary}
The metrics $\mathcal{\tilde{C}}=(1+||b||)\alpha $ $\ $\ and $\alpha $ are
projectively related if and only if $\overset{a}{N_{i}}=P\zeta _{i},$ where $%
P=-\frac{(2+||b||)||b||}{\alpha ^{2}}\frac{\partial \alpha ^{2}}{\partial
z^{i}}\zeta ^{i}.$
\end{corollary}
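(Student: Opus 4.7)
The plan is to combine Theorem 5.5 (projective change characterization) with the Berwald-Cartan identity $\tilde{N}_{i}=(1+||b||)^{2}\overset{a}{N_{i}}$ from equation (ii) just preceding the statement. Since $\mathcal{\tilde{C}}=(1+||b||)\alpha$ is a purely Hermitian Cartan-Randers metric it is automatically Berwald-Cartan; moreover $\tilde{h}^{\bar{j}i}=(1+||b||)^{2}a^{\bar{j}i}$ is a conformal rescaling of $a^{\bar{j}i}$ whose factor depends only on $z$, and $\tilde{\zeta}^{j}=(1+||b||)^{2}\zeta^{j}$. These two facts are the only structural inputs I will need beyond Theorem 5.5.

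By Theorem 5.5, projective relatedness of $\mathcal{\tilde{C}}$ and $\alpha$ is equivalent to the existence of a smooth $\mathbf{C}$-valued function $Q$ on $\widetilde{T^{\prime \ast}M}$ such that $\tilde{N}_{k}=\overset{a}{N_{k}}+B_{k}+Q\zeta_{k}$, where $B_{k}:=\tilde{h}_{sk}\tilde{\Theta}^{\ast s}-a_{sk}\Theta^{\ast s}$. Substituting identity (ii) into this projective change yields the key relation
\begin{equation*}
(2+||b||)||b||\,\overset{a}{N_{k}}=B_{k}+Q\zeta_{k}.
\end{equation*}
The next step is to prove $B_{k}=0$. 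Inserting the explicit formula for $\tilde{N}_{ji}$ associated with the conformally rescaled metric into $\tilde{\Theta}^{\ast s}=\tilde{h}^{\bar{m}s}(\tilde{N}_{\bar{r}\bar{m}}-\tilde{N}_{\bar{m}\bar{r}})\zeta^{\bar{r}}$, one sees that the extra contribution coming from $(1+||b||)^{2}$ is symmetric in $\bar{r},\bar{m}$ and therefore cancels in the antisymmetrised combination, while the remaining factor of $(1+||b||)^{2}$ in $\tilde{h}^{\bar{m}s}$ is exactly compensated by the one appearing in $\tilde{h}_{sk}=(1+||b||)^{-2}a_{sk}$. Hence $\tilde{h}_{sk}\tilde{\Theta}^{\ast s}=a_{sk}\Theta^{\ast s}$ and the projective condition collapses to $\overset{a}{N_{k}}=P\zeta_{k}$ with $P:=Q/((2+||b||)||b||)$.

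Finally, to identify $P$ I would contract the residual equation $(2+||b||)||b||\,\overset{a}{N_{k}}=Q\zeta_{k}$ with $\zeta^{k}$, using $\zeta_{k}\zeta^{k}=\alpha^{2}$ together with the short computation $\overset{a}{N_{k}}\zeta^{k}=-\frac{\partial^{\ast}\alpha^{2}}{\partial z^{k}}\zeta^{k}$ obtained from $\overset{a}{N_{jk}}=-a_{j\bar{r}}\frac{\partial^{\ast}a^{\bar{r}l}}{\partial z^{k}}\zeta_{l}$ and $\alpha^{2}=a^{\bar{r}l}\zeta_{l}\zeta_{\bar{r}}$. Solving for $Q$ and dividing by $(2+||b||)||b||$ produces the announced expression $P=-\frac{(2+||b||)||b||}{\alpha^{2}}\frac{\partial^{\ast}\alpha^{2}}{\partial z^{i}}\zeta^{i}$. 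The converse direction is then immediate: prescribing $\overset{a}{N_{k}}=P\zeta_{k}$ with this $P$ furnishes the function $Q$ required by Theorem 5.5. The main obstacle in carrying this out is the verification that $B_{k}=0$, since a priori $\alpha$ need not be weakly Kähler-Cartan and Theorem 5.6 does not apply; this verification requires the careful tracking of the holomorphic conformal factor $(1+||b||)^{2}$ in both $\tilde{N}_{ji}$ and $\tilde{h}^{\bar{m}s}$ that is described above.
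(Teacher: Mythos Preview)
Your overall structure matches the paper's proof: both use Theorem~5.5 together with the identity (ii), and both extract $P$ by contracting with $\zeta^{i}$ using $\zeta_{k}\zeta^{k}=\alpha^{2}$ and $\overset{a}{N_{k}}\zeta^{k}=-\frac{\partial^{\ast}\alpha^{2}}{\partial z^{k}}\zeta^{k}$. The converse is also handled the same way.

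The difference lies in how $B_{k}=0$ is obtained, and here your argument has a genuine error. The conformal correction in $\tilde{N}_{\bar{r}\bar{m}}$ has the form $-(1+||b||)^{-2}\frac{\partial^{\ast}(1+||b||)^{2}}{\partial\bar{z}^{m}}\zeta_{\bar{r}}$, which is \emph{not} symmetric in $\bar{r},\bar{m}$; in the antisymmetrisation it contributes $c_{\bar{m}}\zeta_{\bar{r}}-c_{\bar{r}}\zeta_{\bar{m}}$, which does not vanish. So your claimed cancellation fails, and the resulting relation $\tilde{h}_{sk}\tilde{\Theta}^{\ast s}=a_{sk}\Theta^{\ast s}$ is not established by your tracking. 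Moreover, the symbol $a_{sk}$ you introduce is ambiguous: in the paper $h_{sk}$ (two unbarred indices) is the nonholonomic quantity of Lemma~5.1, not a component of the Hermitian inverse metric.

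The paper's route is much shorter and avoids all of this. Since both $\alpha$ and $\mathcal{\tilde{C}}=(1+||b||)\alpha$ are \emph{purely Hermitian}, one has $h^{sk}=\tilde{h}^{sk}=0$ and hence, via (\ref{2}), $h_{sk}=\tilde{h}_{sk}=0$ (this is exactly the observation in the proof of Theorem~5.8). Consequently $B_{k}=\tilde{h}_{sk}\tilde{\Theta}^{\ast s}-h_{sk}\Theta^{\ast s}=0$ trivially, and the projective change reduces immediately to $\tilde{N}_{i}=\overset{a}{N_{i}}+Q\zeta_{i}$. Your concern that Theorem~5.6 does not apply because $\alpha$ need not be weakly K\"ahler-Cartan is thus a red herring: the purely Hermitian property, not the weakly K\"ahler one, is what kills $B_{k}$.
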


\begin{proof}  We suppose that $\mathcal{\tilde{C}}=(1+||b||)\alpha $ $\ $\ and $%
\alpha $ are projectively related. Since $\mathcal{\tilde{C}}$ $\ $\ and $%
\alpha $ are purely Hermitian, then there is a smooth function $Q$ on $%
\widetilde{T^{\prime \ast }M}$ with complex values, such that $\tilde{N}_{i}=%
\overset{a}{N_{i}}+Q\zeta _{i}.$ But, using (\ref{ii}), it results $Q\zeta
_{i}=(2+||b||)||b||\overset{a}{N_{i}}.$ Contracting the last relation with $%
\zeta ^{i},\ $we obtain $Q=\frac{(2+||b||)||b||}{\alpha ^{2}}\overset{a}{%
N_{i}}\zeta ^{i}=-\frac{(2+||b||)||b||}{\alpha ^{2}}\frac{\partial \alpha
^{2}}{\partial z^{i}}\zeta ^{i},$ which prove the direct claim.

Conversely, if $\overset{a}{N_{i}}=P\zeta _{i}$ then taking into account (%
\ref{ii}), we obtain

$\tilde{N}_{i}=\overset{a}{N_{i}}+(2+||b||)||b||\overset{a}{N_{i}}=\overset{a%
}{N_{i}}+(2+||b||)||b||P\zeta _{i}=\overset{a}{N_{i}}+Q\zeta _{i},$ where $%
Q:=(2+||b||)||b||P=-\frac{(2+||b||)^{2}||b||^{2}}{\alpha ^{2}}\frac{\partial
\alpha ^{2}}{\partial z^{i}}\zeta ^{i}$ is a smooth function $Q$ on $%
\widetilde{T^{\prime \ast }M}$ with complex values. This completes our proof.
\end{proof}

Moreover, the metric $\mathcal{\tilde{C}}=(1+||b||)\alpha $ is projectively
related with the complex Euclidean metric $\mathcal{C}$ on a domain $D$ if
and only if $\alpha $ is projectively related with $\mathcal{C}.$\bigskip

The second application refers to the locally projectively flat complex
Cartan metrics. Let $\mathcal{\tilde{C}}$ be a locally Minkowski complex
Cartan metric on the underlying manifold $M.$ Corresponding to the metric $%
\mathcal{\tilde{C}}$ there are exist in any point the local charts in which
we have $\tilde{N}_{k}=0$ and $\tilde{\Theta}^{\ast s}=0$ because, in such
local charts, the fundamental metric tensor $\tilde{h}^{\bar{m}i}$ depends
only on $\zeta $.

Also, we consider $\mathcal{C}$ another complex Cartan metric on the complex
manifold $M$. Note that, we have assumed that $\mathcal{C}$ and $\mathcal{%
\tilde{C}}$ are the images by $\mathcal{L}-$ duality of the locally regulate
complex Finsler metrics $F$ and $\tilde{F}$ on $M,$ respectively.

The complex Finsler metrics $\mathcal{C}$ will be called \textit{locally
projectively flat} if it is projectively related to the locally Minkowski
metric $\mathcal{\tilde{C}}.$

\begin{corollary}
$\mathcal{C}$ is locally projectively flat if and only if $h_{sk}\Theta
^{\ast s}=0$ and $N_{k}=-Q\zeta _{k},$where $Q=\frac{2}{\mathcal{C}}\frac{%
\partial ^{\ast }C}{\partial z^{i}}\zeta ^{i}.$
\end{corollary}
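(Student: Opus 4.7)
The plan is to unwind the definition of locally projective flatness and apply Theorem 5.5 together with Theorem 5.6, after verifying that a locally Minkowski metric is automatically weakly K\"ahler-Cartan. By assumption, $\mathcal{C}$ locally projectively flat means there is a locally Minkowski Cartan metric $\mathcal{\tilde{C}}$ to which $\mathcal{C}$ is projectively related. In Minkowski charts one has $\tilde{h}^{\bar{m}i}=\tilde{h}^{\bar{m}i}(\zeta)$, so all $\partial^{\ast}/\partial z^k$-derivatives of the fundamental tensor vanish; this forces $\tilde{N}_k=0$ and, since the $h$-torsion of $\mathcal{\tilde{C}}$ vanishes identically, also $\tilde{\Theta}^{\ast s}=0$. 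In particular $\mathcal{\tilde{C}}$ is weakly K\"ahler-Cartan.

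Next, I would invoke Theorem 5.6 in the direction applied to $\mathcal{\tilde{C}}$: since $\mathcal{\tilde{C}}$ is weakly K\"ahler-Cartan and the two metrics are projectively related, $B_k=0$. But by definition $B_k=\tilde{h}_{sk}\tilde{\Theta}^{\ast s}-h_{sk}\Theta^{\ast s}=-h_{sk}\Theta^{\ast s}$, which yields the first conclusion $h_{sk}\Theta^{\ast s}=0$. The projective change from Theorem 5.5 then reduces to $\tilde{N}_k=N_k+Q\zeta_k$, and because $\tilde{N}_k=0$ we get $N_k=-Q\zeta_k$.

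To identify $Q$, I would contract $N_k=-Q\zeta_k$ with $\zeta^k$, producing $N_k\zeta^k=-Q\,H$ (where $H=\mathcal{C}^2=\zeta_k\zeta^k$). On the other hand, since $N_{jk}$ are the coefficients of the $C$-$C$ $(c.n.c.)$ the horizontal derivative of $H$ vanishes: $\tfrac{\delta^{\ast}H}{\delta z^k}=\tfrac{\partial^{\ast}H}{\partial z^k}+N_{jk}\zeta^j=0$, so $N_k=N_{jk}\zeta^j=-\tfrac{\partial^{\ast}H}{\partial z^k}$. Substituting and using $H=\mathcal{C}^2$,
\begin{equation*}
Q=-\frac{N_k\zeta^k}{H}=\frac{1}{H}\frac{\partial^{\ast}H}{\partial z^k}\zeta^k=\frac{2}{\mathcal{C}}\frac{\partial^{\ast}\mathcal{C}}{\partial z^k}\zeta^k,
\end{equation*}
which matches the stated formula.

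For the converse, assume $h_{sk}\Theta^{\ast s}=0$ and $N_k=-Q\zeta_k$ with $Q$ as above. I would exhibit $\mathcal{\tilde{C}}$ as a locally Minkowski Cartan metric on the chart (any purely $\zeta$-dependent positive definite $\tilde{h}^{\bar{j}i}(\zeta)$ with matching homogeneity works, e.g. $\mathcal{\tilde{C}}=|\zeta|$ in a trivializing chart), observe that such a metric has $\tilde{N}_k=0$ and $\tilde{\Theta}^{\ast s}=0$, hence $B_k=-h_{sk}\Theta^{\ast s}=0$, and verify directly that the projective change $\tilde{N}_k=N_k+B_k+Q\zeta_k$ holds by the hypotheses; the compatibility condition $(\dot{\partial}^rQ)\zeta_r+(\dot{\partial}^{\bar r}Q)\zeta_{\bar r}=Q$ from Lemma 5.3 follows from the $(1,0)$-homogeneity of $\tfrac{\partial^{\ast}\mathcal{C}}{\partial z^k}\zeta^k$ and the $(-1,0)$-homogeneity of $1/\mathcal{C}$ in $\zeta$. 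Theorem 5.5 then delivers the projective relatedness, so $\mathcal{C}$ is locally projectively flat. The only delicate point, and the step I expect to require most care, is checking that $\mathcal{\tilde{C}}$ can be realized as the $\mathcal{L}$-dual of a locally regulate Finsler metric on the chart (needed so that the projective-change machinery applies); this is handled by choosing $\mathcal{\tilde{C}}$ purely Hermitian in the chart so that the Hessian $\mathcal{H}_1$ is invertible and Theorem 5.1 guarantees an $\mathcal{L}$-dual Finsler metric exists.
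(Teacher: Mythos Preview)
Your proposal is correct and follows the same approach as the paper, which simply cites Theorems 5.5 and 5.6. You supply the details the paper omits: that a locally Minkowski $\mathcal{\tilde{C}}$ has $\tilde N_k=0$ and $\tilde\Theta^{\ast s}=0$ (hence is weakly K\"ahler-Cartan), that Theorem 5.6 then forces $B_k=0$, and that contracting $N_k=-Q\zeta_k$ with $\zeta^k$ (together with $\delta_k^{\ast}H=0$) yields the stated expression for $Q$, exactly as in the proof of Theorem 5.7. One minor remark: your homogeneity bookkeeping for $Q$ is slightly misstated ($\mathcal{C}$ is $(1/2,1/2)$-homogeneous rather than $1/\mathcal{C}$ being $(-1,0)$-homogeneous), but the conclusion that $Q$ is $(0,1)$-homogeneous---equivalently $(\dot\partial^rQ)\zeta_r+(\dot\partial^{\bar r}Q)\zeta_{\bar r}=Q$---is correct, and in any case Lemma 5.3 shows this is automatic once (\ref{12}) holds, so the verification is not actually needed.
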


\begin{proof} It follows by Theorems 5.5 and 5.6.
\end{proof}

\begin{flushleft}
Transilvania Univ., Faculty of Mathematics and Informatics

Iuliu Maniu 50, Bra\c{s}ov 500091, ROMANIA

e-mail: nicoleta.aldea@lycos.com

e-mail: gh.munteanu@unitbv.ro
\end{flushleft}

\end{document}